\newtheorem{theorem}{Theorem}[section]
\newtheorem{corollary}[theorem]{Corollary}
\theoremstyle{definition}
\newtheorem{definition}[theorem]{Definition}
\newtheorem{example}[theorem]{Example}
\newtheorem{remark}[theorem]{Remark}
\renewcommand{\emptyset}{\varnothing}
\newcommand{\ee}{{\operatorname{e}}}
\newcommand{\ii}{{\operatorname{i}}}
\renewcommand{\i}{\textrm{i}}
\DeclareMathOperator{\im}{Im}
\DeclareMathOperator{\ind}{ind}
\DeclareMathOperator{\re}{Re}
\DeclareMathOperator{\wind}{wind}
\DeclarePairedDelimiter{\abs}{\lvert}{\rvert}
\DeclarePairedDelimiter\ceil{\lceil}{\rceil}
\DeclarePairedDelimiter\floor{\lfloor}{\rfloor}
\newcommand{\R}{\ensuremath{\mathbb{R}}}
\newcommand{\C}{\ensuremath{\mathbb{C}}}
\newcommand{\N}{\ensuremath{\mathbb{N}}}
\newcommand{\cH}{\mathcal{H}}
\newcommand{\cO}{\mathcal{O}}
\newcommand{\cP}{\mathcal{P}}
\newcommand{\conj}[1]{\overline{#1}}
 \numberwithin{equation}{section}
\title{\textbf{On the zeros of polyanalytic polynomials}}
\author{Olivier S\`ete\footnotemark[1] \and Jan Zur\footnotemark[3]}
\date{June 13, 2024}
\begin{document}
\maketitle

\renewcommand{\thefootnote}{\fnsymbol{footnote}}

\footnotetext[1]{TU Berlin, Institute of Mathematics, MA 3-3, Stra{\ss}e des 
17. Juni 136, 10623 Berlin, Germany,
\texttt{sete@math.tu-berlin.de}, ORCID: 0000-0003-3107-3053}

\footnotetext[3]{TU Berlin, Institute of Mathematics, MA 3-3, Stra{\ss}e des 
17. Juni 136, 10623 Berlin, Germany,
\texttt{zur@math.tu-berlin.de}, ORCID: 0000-0002-5611-8900}

\begin{abstract}
We give sufficient conditions under which a polyanalytic polynomial of degree 
$n$ has (i) at least one zero, and (ii) finitely many zeros.  In the latter 
case, we prove that the number of zeros is bounded by $n^2$.
We then show that for all $k \in \{0, 1, 2, \dots, n^2, \infty\}$ there exists 
a 
polyanalytic polynomial of degree $n$ with exactly $k$ distinct zeros.
Moreover, we generalize the Lagrange and Cauchy bounds from analytic to 
polyanalytic polynomials and obtain inclusion disks for the zeros.
Finally, we construct a harmonic and thus polyanalytic polynomial of degree $n$ 
with $n$ nonzero coefficients and the maximum number of $n^2$ zeros.
 \end{abstract}

\paragraph*{Keywords:}
Polyanalytic polynomial; 
Zeros;
Fundamental theorem of algebra;
Inclusion region;
Harmonic polynomial;
Wilmshurst's problem.

\paragraph*{Mathematics Subject Classification\,(2020):} 
30C55; 31A05; 30C10; 30C15. 

\section{Introduction}\label{sect:intro}

A \emph{polyanalytic function} is a complex-valued function of the form
\begin{equation}\label{eqn:pa_function}
F(z,\conj{z}) = \sum_{k=0}^m f_k(z)\conj{z}^k,
\end{equation}
where $f_0, \ldots, f_m$ are analytic functions in an open set $\Omega 
\subseteq \C$.
Hence, $F$ can be interpreted as a polynomial in $\conj{z}$ 
with analytic functions as coefficients.  
The term polyanalytic is motivated by the property
$\partial_{\conj z}^{m+1} F = 0$, where $\partial_{z}$ and $\partial_{\conj z}$ 
are the Wirtinger derivatives.
In the sequel, we denote analytic functions by lower case letters, and 
polyanalytic or harmonic functions by capital 
letters.  Moreover, we use a notation common in the literature and write 
$F(z,\conj z)$ instead of $F(z)$ for polyanalytic and harmonic functions
although $z$ and $\conj{z}$ are not independent variables of $F$.

The polyanalytic function theory was strongly influenced by Balk and his 
students.  They developed analogues of several classical results for analytic 
functions, e.g., Liouville's theorem~\cite[Thm.~4.2]{Balk1991}, Picard's 
theorems~\cite[Sect.~5.2]{Balk1991}, Cauchy's integral formula and other 
integral representations~\cite[Thm.~1.4]{Balk1997}.
See~\cite{Balk1991,Balk1997} for an overview on their results.
Other researchers also contributed to the classical theory of polyanalytic 
functions, 
e.g., Bosch and Krajkiewicz~\cite{KrajkiewiczBosch1969,BoschKrajkiewicz1970}.
Besides the function theoretic approach, polyanalytic functions have also been 
studied in a functional analytic context; see, e.g., the survey of
Abreu and Feichtinger~\cite{AbreuFeichtinger2014} and references therein.

If $f_0, \ldots, f_m$ in~\eqref{eqn:pa_function} are analytic 
\emph{polynomials}, 
$F$ is called a \emph{polyanalytic polynomial}, which is a polynomial in $z$ 
and $\conj{z}$.  In the sequel, we will use its \emph{bivariate form}
\begin{equation}\label{eqn:papolynomial}
P(z,\conj{z}) = \sum_{k=0}^n \sum_{j = 0}^k \alpha_{j,k-j}z^j \conj{z}^{k-j}.
\end{equation}
Note that two polyanalytic polynomials are equal if and only if they 
have the same coefficients $\alpha_{j,k-j} \in \C$ in~\eqref{eqn:papolynomial}.
Unlike analytic polynomials, polyanalytic polynomials can have 
an empty, finite, or infinite set of zeros.
For example, $P_1(z,\conj{z}) = z - \conj{z}$ vanishes on 
the real axis, i.e., $P_1$ has an unbounded set of nonisolated zeros, 
$P_2(z,\conj{z}) = z\conj{z}-1$ vanishes on the unit circle, i.e., it has a 
bounded set of nonisolated zeros, 
$P_3(z, \conj{z}) = z \conj{z}$ has an isolated zero,
while $P_4(z,\conj{z}) = z\conj{z}+1$ has no zeros at all.

Obviously, any polyanalytic polynomial $P(z,\conj z)$~\eqref{eqn:papolynomial} 
can be identified with a 
system of two real polynomials $p$ and $q$ in the real variables $x$ and $y$,
\begin{equation} \label{eqn:real_polynomial_system}
\begin{cases}
p(x,y) = \re(P(x + \ii y,x - \ii y)), \\
q(x,y) = \im(P(x + \ii y,x - \ii y)),
\end{cases}
\end{equation}
and any system of two real polynomials $p$ and $q$ in the real variables $x$ 
and $y$ can be identified with the polyanalytic polynomial
\begin{equation}
P(z,\conj z) = p\left(\frac{z+\conj z}{2},\frac{z-\conj z}{2\i}\right) 
+ \i q\left(\frac{z+\conj z}{2},\frac{z-\conj z}{2\i}\right).
\end{equation}
So what is the advantage of the polyanalytic notation?  Although polyanalytic 
polynomials are not analytic, we can use numerous tools of
complex analysis, e.g., the argument principle and the visualization of 
complex-valued functions, as it was done before in several numerical 
applications, where the polyanalytic perspective has turned out to be 
favorable.  In particular, for determining exclusion and inclusion regions for 
the eigenvalues of a normal matrix~\cite{HuhtanenLarsen2002}, computing an 
orthonormal basis of generalized Krylov subspaces~\cite{Huhtanen2003},  
and bivariate root-finding~\cite{SorberVanBarelDeLathauwer2014}.

This paper is devoted to discuss the following questions concerning the zeros 
of polyanalytic polynomials in Sections~\ref{sect:bounds}--\ref{sect:extremal}:

\smallskip

\begin{compactenum}
\item When does a polyanalytic polynomial have at least one zero?
\item When does a polyanalytic polynomial have finitely many zeros?
\item What is the maximum number of zeros of a polyanalytic polynomial, 
provided that this number is finite?
\item Which numbers of zeros can occur for a polyanalytic polynomial of degree 
$n$?
\item Where are the zeros of a polyanalytic polynomial located?
\item How can a polyanalytic polynomial with the maximum number of zeros be 
constructed?
\end{compactenum}

\smallskip

\noindent Moreover, we consider \emph{harmonic polynomials} 
\begin{equation}\label{eqn:harmonic_polynomial}
P(z,\conj z) = p(z) + \conj{q(z)},
\end{equation}
where $p$ and $q$ are analytic polynomials with $\deg(p) = n$ and $\deg(q) = m$.
Every harmonic polynomial is also a polyanalytic polynomial.
The absence of mixed terms $\alpha_{j,k-j} z^j \conj{z}^{k-j}$ simplifies the 
investigation.
Wilmshurst~\cite{Wilmshurst1998}
proved that $P$ has at most $n^2$ zeros if $\lim_{z \to 
\infty}P(z,\conj z) = \infty$ holds.  Independently, Peretz and 
Schmid~\cite{PeretzSchmid1997} proved the same result using algebraic geometry.
Moreover, Wilmshurst conjectured for $n > m$ that an upper bound on the number 
of zeros of $P$ as in~\eqref{eqn:harmonic_polynomial} is
\begin{equation}\label{eq:wilmshurst_conjecture}
N(P) \le 3n-2 + m(m-1).
\end{equation}
For $m = n-1$ this gives $n^2$.  He also provided an example to show 
that this bound is sharp for $m = n-1$.  Khavinson and 
{\'S}wi{\c{a}}tek~\cite{KhavinsonSwiatek2003}, and Geyer~\cite{Geyer2008} 
settled the conjecture for $m = 1$, while Lee, Lerario, and 
Lundberg~\cite{LeeLerarioLundberg2015,Lundberg2023} proved that the conjecture 
is not true in general.  Determining the maximum number of zeros of 
harmonic polynomials depending on the degrees $n$ and $m$ is still an open 
problem; see~\cite[Sect.~2.2]{BeneteauHudson2018} 
and~\cite[Sect.~4]{Lundberg2023}.  
We will discuss an extension from the harmonic to the polyanalytic case in 
Section~\ref{sect:wilmshurst}.
 \section{Number of zeros}\label{sect:bounds}

In this section, we answer questions (i)--(iv) from the introduction.
First, we introduce the degrees of a polyanalytic polynomial.

\begin{definition}
For a polyanalytic polynomial $P(z,\conj{z}) = \sum_{k=0}^m p_k(z)\conj{z}^k$ 
with nonzero $p_m$, 
we define the \emph{degree} $\deg(P) = \max_{k}(\deg(p_k) + k)$, the 
\emph{analytic degree} $\deg_{z}(P) = \max_{k}\deg(p_k)$, and the 
\emph{anti-analytic degree} $\deg_{\conj{z}}(P) = m$.
\end{definition}

\begin{example}
The polyanalytic polynomial $P(z,\conj{z}) = z^3 + z^2\conj{z}^2$ 
has the degrees $\deg(P) = 4$, $\deg_z(P) = 3$, and $\deg_{\conj{z}}(P) = 2$.
\end{example}

The first question, ``When does a polyanalytic polynomial have at least one 
zero?'', was answered by Balk in~\cite{Balk1968} 
and~\cite[Thm.~4.1]{Balk1991} by his fundamental theorem of algebra for 
polyanalytic polynomials.

\begin{theorem}\label{thm:fta}
A polyanalytic polynomial $P$ with $\deg(P) > 2\deg_{z}(P)$ or $\deg(P) > 
2\deg_{\conj{z}}(P)$ has at least one zero.
\end{theorem}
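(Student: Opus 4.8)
The plan is to prove the statement by a winding‑number (argument–principle) argument on large circles, after first reducing to one of the two cases. Swapping the two formal slots of $P$ gives another polyanalytic polynomial $\tilde P(z,\conj z)\coloneq P(\conj z,z)$, and since $\tilde P(z_0,\conj{z_0})=P(\conj{z_0},z_0)=P(\conj{z_0},\overline{\conj{z_0}})$, the zeros of $\tilde P$ are exactly the conjugates of the zeros of $P$, while $\deg_z(\tilde P)=\deg_{\conj z}(P)$, $\deg_{\conj z}(\tilde P)=\deg_z(P)$ and $\deg(\tilde P)=\deg(P)$. Hence it suffices to treat the case $\deg(P)>2\deg_{\conj z}(P)$. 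Write $P(z,\conj z)=\sum_{k=0}^m p_k(z)\conj z^k$ with $p_m\ne 0$, $m=\deg_{\conj z}(P)$, and $D\coloneq\deg(P)>2m$.

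Suppose, for contradiction, that $P$ has no zero in $\C$. Then for every $R>0$ the loop $\gamma_R(\theta)\coloneq P(Re^{\ii\theta},Re^{-\ii\theta})$ lies in $\C\setminus\{0\}$ and extends continuously over $\{|z|\le R\}$, hence is null‑homotopic there, so $\wind(\gamma_R,0)=0$. The key device is the identity $z\conj z=R^2$ on $|z|=R$: it shows that $z^m P(z,\conj z)$ coincides on $|z|=R$ with the \emph{genuine analytic polynomial} $Q_R(z)\coloneq\sum_{k=0}^m R^{2k}\,z^{m-k}\,p_k(z)$. Since neither $z^m$ nor $P$ vanishes on $|z|=R$, neither does $Q_R$, and comparing the change of argument along $|z|=R$ gives
\[
0=\wind(\gamma_R,0)=\wind\bigl(\theta\mapsto Q_R(Re^{\ii\theta}),\,0\bigr)-m=Z_R-m,
\]
where $Z_R$ is the number of zeros of $Q_R$ in $\{|z|<R\}$ counted with multiplicity (argument principle for the polynomial $Q_R$). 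It thus remains to force $Z_R>m$ for some large $R$.

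For the estimate, substitute $z=Rw$: one finds $R^{-(m+D)}Q_R(Rw)\to\Psi(w)\coloneq\sum_{\{k\,:\,\deg p_k+k=D\}}a_k\,w^{\,m+D-2k}$ locally uniformly on $\C$ as $R\to\infty$, where $a_k\ne 0$ is the leading coefficient of $p_k$ (every other monomial of $Q_R(Rw)$ carries a strictly lower power of $R$). Put $k^*\coloneq\max\{k:\deg p_k+k=D\}$; then $k^*\le m$, $\Psi\not\equiv 0$, and $w=0$ is a zero of $\Psi$ of order $m+D-2k^*\ge D-m>0$. Fix $\delta\in(0,1)$ with $\Psi\ne 0$ on $|w|=1-\delta$. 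By Hurwitz's theorem, for all large $R$ the polynomial $w\mapsto Q_R(Rw)$ has at least $D-m$ zeros in $\{|w|<1-\delta\}$, hence $Q_R$ has at least $D-m$ zeros in $\{|z|<(1-\delta)R\}\subset\{|z|<R\}$. Therefore $Z_R\ge D-m$, so $0=Z_R-m\ge D-2m>0$, a contradiction; hence $P$ has a zero.

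The delicate point — and the reason for multiplying by $z^m$ — is that the top‑degree trigonometric part $g(\theta)=\sum_{i+k=D}\alpha_{i,k}e^{\ii(i-k)\theta}$ of $P$ on $|z|=R$ may vanish at finitely many angles (equivalently, $\Psi$ may have zeros on $|w|=1$), so $\gamma_R$ need not be uniformly close to $R^D g$ and a direct Rouché argument on $\gamma_R$ fails. Passing to the analytic polynomial $Q_R$ sidesteps this: we never use the exact winding number, only the one‑sided bound $Z_R\ge D-m$, which is supplied entirely by the forced zero of $\Psi$ of order at least $D-m$ at the origin, so zeros of $\Psi$ on the unit circle are irrelevant. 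The remaining case $\deg(P)>2\deg_z(P)$ follows from the reduction above (or, symmetrically, by multiplying by $\conj z^{\deg_z P}$ and using the argument principle for anti‑analytic polynomials).
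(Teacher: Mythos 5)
Your proof is correct. Note that the paper itself does not prove Theorem~\ref{thm:fta}: it is quoted from Balk \cite[Thm.~4.1]{Balk1991}, so there is no in-paper argument to compare against line by line. Your route is a complete, self-contained proof: the reduction via $\tilde P(z,\conj z)=P(\conj z,z)$ is sound (it swaps $\deg_z$ and $\deg_{\conj z}$ and conjugates the zero set), the identity $z^m P(z,\conj z)=Q_R(z)$ on $\abs{z}=R$ with $Q_R(z)=\sum_{k=0}^m R^{2k}z^{m-k}p_k(z)$ is exactly the device that converts the problem into counting zeros of an analytic polynomial, the bookkeeping $0=\wind(P;\abs{z}=R)=Z_R-m$ uses Theorem~\ref{thm:zero_continuous_function} together with additivity of the winding and the argument principle, and the rescaling $R^{-(m+D)}Q_R(Rw)\to\Psi(w)$ with the forced zero of order at least $D-m$ at $w=0$ correctly yields $Z_R\ge D-m$ for large $R$ via Rouch\'e/Hurwitz, giving the contradiction $D\le 2m$. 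Your closing remark is also well taken: because you only need a one-sided bound on $Z_R$ coming from the zero of $\Psi$ at the origin, possible zeros of the top-degree part on the circle do no harm, which is precisely why a naive dominant-term Rouch\'e argument (the technique the paper does use, in the proof of Theorem~\ref{thm:sufficient_condition}) cannot prove Theorem~\ref{thm:fta} without the extra hypothesis~\eqref{eqn:dominant_coefficient}. In short: the paper's own winding argument buys a quick proof under a coefficient-dominance assumption, while your substitution $\conj z=R^2/z$ on circles plus a limiting zero count buys the full degree-based statement; the latter is essentially the classical route going back to Balk, reconstructed here correctly and in full detail. Two cosmetic points: you should restrict the index set defining $\Psi$ to those $k$ with $p_k\not\equiv 0$ (otherwise ``leading coefficient'' is undefined), and you could note that the case $m=0$ needs no separate treatment since then $\Psi(w)=a_0w^{D}$ and the same contradiction recovers the classical fundamental theorem of algebra.
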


Theorem~\ref{thm:fta} is sharp in the following sense.  There exists a 
polyanalytic polynomial $P$ with $\deg(P) \leq 2 \deg_z(P)$ and $\deg(P) \leq 2 
\deg_{\conj{z}}(P)$ that has no zeros, e.g., given $k, n \in \N$ with $k \leq n 
\leq 2k$,
\begin{equation}
P(z,\conj{z}) = (z + \conj{z} + \i)^{2k-n}(z\conj{z} + 1)^{n-k}
\end{equation}
has no zeros but $\deg(P) = n$ and $\deg_{z}(P)= k = \deg_{\conj{z}}(P)$; 
see~\cite[Rem.~4.2]{Balk1991}.

We give another sufficient condition for the existence of at least one zero in 
Theorem~\ref{thm:sufficient_condition}, and hence an alternative fundamental 
theorem of algebra for polyanalytic polynomials.
Recall that the winding of a continuous function~$f$ along a curve $\Gamma$ on 
which $f$ has no zeros is the change of the argument of $f$ along $\Gamma$ 
divided by $2 \pi$,
\begin{equation}
\wind(f; \Gamma) = \frac{1}{2 \pi} \Delta_\Gamma \arg(f(z));
\end{equation}
see, e.g.,~\cite[Sect.~2.3]{Balk1991}, \cite[Sect.~2.3]{Sheil-Small2002}, 
or~\cite[Sect.~3.4]{Wegert2012}.

\begin{theorem}[{\cite[Thm.~2.3]{Balk1991}, 
\cite[Sect.~2.3.12]{Sheil-Small2002}}] \label{thm:zero_continuous_function}
Let the function $f$ be continuous in the closed domain $D \subseteq \C$ 
bounded by a closed curve $\Gamma$.  If $f$ has no zeros in $D$, then $\wind(f; 
\Gamma) = 0$.
\end{theorem}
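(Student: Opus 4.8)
The plan is to deduce the statement from the homotopy invariance of the winding number, exploiting that a Jordan domain is simply connected, so that $\Gamma$ can be contracted to a point inside $D$ without ever meeting a zero of $f$. First I would record the two elementary facts that drive the argument: since $f$ is continuous on the compact set $D$ and has no zeros there, $\delta \coloneq \min_{z \in D}\abs{f(z)} > 0$ and $f$ is uniformly continuous on $D$. In particular $f$ has no zeros on $\Gamma$, so, writing $\Gamma = \gamma([0,1])$ with $\gamma(0) = \gamma(1)$, the loop $f \circ \gamma \colon [0,1] \to \C\setminus\{0\}$ admits a continuous determination of its argument (equivalently, a continuous logarithm), and $\wind(f;\Gamma) = \frac{1}{2\pi}\Delta_\Gamma\arg(f(z)) \in \Z$ is well defined.

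Next I would establish homotopy invariance of the winding number through loops avoiding $0$: if $G \colon [0,1]\times[0,1] \to \C\setminus\{0\}$ is continuous and each $G(\cdot,s)$ is a loop, then $s \mapsto \wind(G(\cdot,s))$ is constant. Being integer valued, it is enough to show this map is locally constant; and two loops $g_0, g_1 \colon [0,1]\to\C\setminus\{0\}$ with $\abs{g_1(t)-g_0(t)} < \abs{g_0(t)}$ for all $t$ have equal winding (the quotient $g_1/g_0$ then stays in the open disk $\{w : \abs{w-1}<1\}$, hence in the right half-plane, hence has winding $0$, and the winding is additive under multiplication), and by uniform continuity of $G$ this near-equality holds for all $s$ in a neighbourhood of any fixed $s_0$. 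Since $\Gamma$ is a Jordan curve, $D$ is simply connected, so $\gamma$ is null-homotopic in $D$: there is a continuous $H \colon [0,1]\times[0,1] \to D$ with $H(\cdot,0) = \gamma$, $H(\cdot,1) \equiv z_0$ for some $z_0 \in D$, and $H(0,s) = H(1,s)$ for all $s$. Then $G \coloneq f \circ H$ maps into $\C\setminus\{0\}$, because $f$ is zero-free on $D$, and is a homotopy of loops from $f\circ\gamma$ to the constant loop $f(z_0)$; hence $\wind(f;\Gamma) = \wind(f\circ\gamma) = 0$.

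One can also avoid the null-homotopy by a subdivision argument: triangulate $D$ into cells $T_1,\dots,T_N$ so small that $\abs{f(z) - f(w)} < \delta$ whenever $z,w$ lie in a common cell; fixing $z_i \in T_i$, the values $f(z)$ for $z \in T_i$ then lie in the disk $\{\zeta : \abs{\zeta - f(z_i)} < \abs{f(z_i)}\}$, hence in an open half-plane having $0$ on its boundary, so $f$ admits a continuous argument on $T_i$ and $\wind(f;\partial T_i) = 0$. Summing over $i$, each interior edge is traversed twice with opposite orientations and cancels, leaving $\wind(f;\Gamma) = \sum_{i=1}^{N}\wind(f;\partial T_i) = 0$.

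The analytic ingredients — positivity of $\min_D\abs{f}$, the existence of a continuous logarithm of $f$ on sufficiently small sets, and the integrality and local constancy of the winding number — are routine. The one genuinely nontrivial point is topological: that $\Gamma$ bounds a simply connected region (so the contracting homotopy can be kept inside $D$, and thus away from the zeros of $f$), or, in the second proof, that $D$ admits arbitrarily fine triangulations. For circles and piecewise-smooth Jordan curves, which cover the situations relevant in this paper, this is elementary; in full generality it is the content of the Jordan--Schoenflies theorem, which I would invoke rather than reprove.
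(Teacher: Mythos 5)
Your argument is correct, but note that the paper offers no proof of this statement at all: it is quoted as a known result with references to Balk and to Sheil-Small, and is then used only with $\Gamma$ a (large) circle in the proof of Theorem~\ref{thm:sufficient_condition}. What you supply is essentially the standard textbook proof that those references give in one form or another: the contrapositive ``degree principle'' via homotopy invariance of the winding number. Both of your routes are sound. In the homotopy route, the local-constancy step (two zero-free loops with $\abs{g_1-g_0}<\abs{g_0}$ pointwise have equal winding, applied on a compact parameter square where $\min\abs{G}>0$) is exactly the right mechanism, and composing $f$ with a null-homotopy of $\gamma$ inside $D$ uses precisely the zero-freeness of $f$ on all of $D$, not just on $\Gamma$. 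In the subdivision route, the oscillation bound $\abs{f(z)-f(w)}<\delta=\min_D\abs f$ on each cell puts $f(T_i)$ in a zero-avoiding half-plane, so each $\wind(f;\partial T_i)=0$ and interior edges cancel; this is the classical ``small cells'' proof. The only point deserving care is the one you already flag: the theorem as stated speaks of a closed domain ``bounded by a closed curve,'' and your argument reads $\Gamma$ as a Jordan curve so that $D$ is homeomorphic to a closed disk (Jordan--Schoenflies), making it simply connected, respectively triangulable into arbitrarily fine cells with boundary edges summing to $\Gamma$. Invoking Schoenflies rather than reproving it is entirely reasonable, and for the paper's actual application (circles) even the elementary case suffices. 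So: correct proof, self-contained where the paper merely cites, and in substance the same topological argument as the cited sources.
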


The converse of Theorem~\ref{thm:zero_continuous_function} does not hold, since 
there exists functions with zeros but winding $0$, e.g., $P(z, \conj{z}) = z 
\conj{z}$ on $D = \{ z \in \C : \abs{z} \leq 1 \}$.

\begin{theorem} \label{thm:sufficient_condition}
A polyanalytic polynomial
\begin{equation}
P(z, \conj{z}) = \sum_{j=0}^n \alpha_{j, n-j} z^j \conj{z}^{n-j} + P_{n-1}(z, 
\conj{z})
\end{equation}
with $n \geq 1$, $\deg(P_{n-1}) \leq n-1$, and such that there exists an
$\ell \in \{ 0, 1, \ldots, n \}$, $2 \ell \neq n$, with
\begin{equation} \label{eqn:dominant_coefficient}
\abs{\alpha_{\ell, n-\ell}} > \sum_{j=0, j \neq \ell}^n \abs{\alpha_{j, n-j}},
\end{equation}
has at least one zero.  In particular, a polyanalytic polynomial of the form
\begin{equation}
P(z, \conj{z}) = \alpha_{\ell, n-\ell} z^\ell \conj{z}^{n-\ell} + P_{n-1}(z, 
\conj{z})
\end{equation}
with 
$n \geq 1$, $\alpha_{\ell, n-\ell} \neq 0$, $2 \ell \neq n$, and $\deg(P_{n-1}) 
\leq n-1$ has at least one zero.
\end{theorem}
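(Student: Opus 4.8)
The plan is to compute the winding of $P$ along a large circle and then invoke the contrapositive of Theorem~\ref{thm:zero_continuous_function}. Write $P = A + P_{n-1}$, where $A(z,\conj z) = \sum_{j=0}^n \alpha_{j,n-j} z^j \conj z^{n-j}$ is the homogeneous part of degree $n$. On the positively oriented circle $\Gamma_R = \{\, z : \abs{z} = R \,\}$, parametrized by $z = R\ee^{\ii t}$ with $t \in [0,2\pi]$, one has $z^j \conj{z}^{n-j} = R^n \ee^{\ii(2j-n)t}$, so
\[
P(R\ee^{\ii t}, R\ee^{-\ii t}) = R^n\bigl( \alpha_{\ell,n-\ell}\ee^{\ii(2\ell-n)t} + s_R(t) \bigr),
\]
where $s_R(t) = \sum_{j=0,\, j\neq\ell}^n \alpha_{j,n-j}\ee^{\ii(2j-n)t} + R^{-n}P_{n-1}(R\ee^{\ii t}, R\ee^{-\ii t})$. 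The idea is that the leading term $\alpha_{\ell,n-\ell}\ee^{\ii(2\ell-n)t}$ dominates $s_R$ once $R$ is large, and then $P$ has the same winding on $\Gamma_R$ as a pure power of $\ee^{\ii t}$.

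The estimate is routine. Since $\deg(P_{n-1}) \le n-1$, every monomial $z^a\conj{z}^b$ of $P_{n-1}$ satisfies $a+b \le n-1$, so for $R \ge 1$ we get $\abs{P_{n-1}(R\ee^{\ii t}, R\ee^{-\ii t})} \le C R^{n-1}$, where $C$ is the sum of the moduli of the coefficients of $P_{n-1}$. Hence $\abs{s_R(t)} \le \sum_{j=0,\, j\neq\ell}^n \abs{\alpha_{j,n-j}} + C/R$ for all $t$. By~\eqref{eqn:dominant_coefficient} the sum on the right is strictly smaller than $\abs{\alpha_{\ell,n-\ell}}$, so we may fix $R \ge 1$ so large that $\abs{s_R(t)} < \abs{\alpha_{\ell,n-\ell}}$ for every $t$. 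In particular $\abs{P(R\ee^{\ii t}, R\ee^{-\ii t})} \ge R^n\bigl(\abs{\alpha_{\ell,n-\ell}} - \abs{s_R(t)}\bigr) > 0$, so $P$ has no zeros on $\Gamma_R$ and $\wind(P;\Gamma_R)$ is well defined.

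Now apply the ``dog-on-a-leash'' principle for continuous functions: if $f, g$ are continuous and zero-free on a closed curve $\Gamma$ with $\abs{f-g} < \abs{g}$ there, then $f/g$ maps $\Gamma$ into the open right half-plane $\{\, \re > 0 \,\}$, which is simply connected and avoids $0$, so $\wind(f;\Gamma) = \wind(g;\Gamma)$. Taking $f = P(R\ee^{\ii t},R\ee^{-\ii t})$ and $g(t) = R^n\alpha_{\ell,n-\ell}\ee^{\ii(2\ell-n)t}$ gives
\[
\wind(P;\Gamma_R) = \wind\bigl(t \mapsto \ee^{\ii(2\ell-n)t};\ [0,2\pi]\bigr) = 2\ell - n.
\]
Since $2\ell \neq n$, this is nonzero, so by Theorem~\ref{thm:zero_continuous_function} (in contrapositive form) the polynomial $P$, being continuous on the closed disk bounded by $\Gamma_R$, has a zero in $\{\, z : \abs{z} < R \,\}$. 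The ``in particular'' statement follows immediately: if $\alpha_{j,n-j} = 0$ for all $j \neq \ell$ and $\alpha_{\ell,n-\ell} \neq 0$, then~\eqref{eqn:dominant_coefficient} holds trivially, so the first part applies.

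There is no serious obstacle here; the only points that need care are bookkeeping ones. One must match the orientation of $\Gamma_R$ with the sign convention used in $\wind(\cdot;\Gamma)$, and one must apply the comparison of $P$ with its leading term to \emph{merely continuous} functions — which is precisely why the argument goes through $f/g$ landing in a half-plane rather than through a direct appeal to Rouch\'e's theorem. Everything else reduces to the single growth bound on $P_{n-1}$.
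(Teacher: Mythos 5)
Your proof is correct and follows essentially the same route as the paper: dominate the term $\alpha_{\ell,n-\ell} z^\ell \conj{z}^{n-\ell}$ on a large circle using \eqref{eqn:dominant_coefficient} and the $\cO(R^{n-1})$ bound on $P_{n-1}$, conclude $\wind(P;\Gamma_R) = 2\ell - n \neq 0$, and apply Theorem~\ref{thm:zero_continuous_function} in contrapositive form. The only (cosmetic) difference is that you compare $P$ additively with its leading monomial via the continuous ``dog-on-a-leash'' Rouch\'e argument, whereas the paper factors out $z^\ell \conj{z}^{n-\ell}$ and uses additivity of the winding together with the fact that $\alpha_{\ell,n-\ell} + g$ stays in a disk not containing $0$.
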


\begin{proof}
Let us write $P(z, \conj{z}) = z^\ell \conj{z}^{n-\ell} (\alpha_{\ell, n-\ell} 
+ g(z))$ with
\begin{equation*}
g(z) = 
\sum_{j = 0, j \neq \ell}^n \alpha_{j,n-j} \frac{z^j \conj{z}^{n-j}}{z^\ell 
\conj{z}^{n-\ell}} + \frac{P_{n-1}(z, \conj{z})}{z^\ell \conj{z}^{n-\ell}}.
\end{equation*}
Note that $g$ is not polyanalytic in general.  Since
\begin{equation*}
\abs{g(z)} \leq \sum_{j = 0, j \neq \ell}^n \abs{\alpha_{j, n-j}} + 
\cO(\abs{z}^{-1}) < \abs{\alpha_{\ell, n-\ell}}
\end{equation*}
holds for all $z$ on a sufficiently large circle $\Gamma$ about $0$ (oriented 
in the positive sense), we have
$\wind(\alpha_{\ell, n-\ell} + g(z); \Gamma) = 0$ and therefore
\begin{align*}
\wind(P; \Gamma) &= \wind(z^\ell; \Gamma) + \wind(\conj{z}^{n-\ell}; \Gamma)
+ \wind(\alpha_{\ell, n-\ell} + g(z); \Gamma) \\
&= \ell - (n-\ell) = 2 \ell - n \neq 0.
\end{align*}
By Theorem~\ref{thm:zero_continuous_function}, $P$ has a zero in the interior 
of $\Gamma$.
\end{proof}

\begin{remark}
\begin{enumerate}
\item Theorem~\ref{thm:sufficient_condition} is sharp in the following sense.
There exists a polyanalytic polynomial $P$ which does not 
fulfill~\eqref{eqn:dominant_coefficient} and has no zeros, e.g., $P(z, 
\conj{z}) = z + \conj{z} + \ii$.
If $2 \ell = n$ in Theorem~\ref{thm:sufficient_condition}, then $P$ does not 
necessarily have a zero, e.g., for $P(z, \conj{z}) = z^\ell \conj{z}^\ell + 1$.

\item Neither Theorem~\ref{thm:fta} nor Theorem~\ref{thm:sufficient_condition} 
is covered by the other.
While Theorem~\ref{thm:fta} does not apply to $P(z, \conj{z}) = z^n + z 
\conj{z}^{n-2} + 1$, $n \geq 4$, Theorem~\ref{thm:sufficient_condition} shows 
that $P$ has at least one zero.
In contrast, for $P(z, \conj{z}) = z^n + z^\ell \conj{z}^{n-\ell} + 1$ with $n 
\geq 1$ and $n/2 < \ell \leq n$, Theorem~\ref{thm:fta} guarantees the existence 
of a zero, while Theorem~\ref{thm:sufficient_condition} does not apply.

\item Both Theorems~\ref{thm:fta} and~\ref{thm:sufficient_condition} imply the 
fundamental theorem of algebra for analytic polynomials.
\end{enumerate}
\end{remark}

Next, we consider the second question: ``When does a polyanalytic polynomial 
have finitely many zeros?''.  We start with the following observation; 
see~\cite[Rem.~4.3]{Balk1991} and~\cite[Sect.~2.2]{Balk1991}.

\begin{theorem} \label{thm:isolated_finite}
Let $P$ be a polyanalytic polynomial.
\begin{enumerate}
\item If the zeros of $P$ are isolated, then their number is finite.
\item If $P$ has nonisolated zeros, then $P$ vanishes on an arc.
\end{enumerate}
\end{theorem}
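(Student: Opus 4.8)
The plan is to treat the polyanalytic polynomial $P(z,\conj z)$ as a real-algebraic object via the identification~\eqref{eqn:real_polynomial_system}, writing $P = p + \ii q$ with $p,q \in \R[x,y]$, so that the zero set of $P$ is exactly $V(p) \cap V(q)$, the intersection of two real algebraic curves in $\R^2$. For part (i), I would argue by contraposition: suppose the zero set of $P$ is infinite. Each of $V(p)$ and $V(q)$ is a real algebraic curve, and an infinite subset of $\R^2 \cong \R^2$ lying on an algebraic curve of bounded degree must have an accumulation point (by Bolzano--Weierstrass, provided we first dispose of the unbounded case — but an unbounded family of zeros along a curve still accumulates on the curve's structure; more carefully, one restricts to a large disk, and if all disks contain only finitely many zeros the set is at most countable and still could be infinite, so the clean route is: infinitely many zeros force a common component of $V(p)$ and $V(q)$, or an accumulation point). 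The cleanest formulation: if $P$ has infinitely many zeros, then either they accumulate somewhere in $\C$, or they escape to infinity; in the first case the zeros are not isolated, contradicting the hypothesis of (i); and isolated zeros of a polyanalytic polynomial cannot escape to infinity because, outside a large disk, the dominant homogeneous part controls $P$ — but this last point is exactly where (ii) enters, so it is better to prove (ii) first and bootstrap.

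So I would restructure: prove (ii) first, then (i) follows. For part (ii), suppose $P$ has a nonisolated zero $z_0$, i.e., $z_0$ is an accumulation point of the zero set. Then $z_0 \in V(p) \cap V(q)$ and every neighborhood of $z_0$ contains infinitely many points of $V(p) \cap V(q)$. By the structure theory of real algebraic plane curves (or, concretely, by the Puiseux/Newton-polygon local parametrization of $V(p)$ near $z_0$), the curve $V(p)$ near $z_0$ is a finite union of real-analytic arcs emanating from $z_0$ (possibly together with the isolated point $z_0$ itself if $p$ has a strict local extremum there). The accumulating zeros of $P$ must lie on one of these finitely many arcs of $V(p)$; call it $\gamma$, parametrized real-analytically by $t \mapsto \gamma(t)$ with $\gamma(0) = z_0$. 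Along this arc, $q \circ \gamma$ is a real-analytic function of $t$ vanishing at infinitely many points accumulating at $t = 0$; by the identity theorem for real-analytic functions, $q \circ \gamma \equiv 0$ on the arc. Hence $P$ vanishes identically on the arc $\gamma$, which is the desired conclusion of (ii). (One should also handle the symmetric possibility that $V(q)$, rather than $V(p)$, provides the arc — but the argument is identical, and in fact one just needs a common arc, which is what falls out.)

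With (ii) in hand, (i) is immediate: if the zeros of $P$ are all isolated, then by (ii) $P$ does not vanish on any arc, so $P$ has no nonisolated zeros — consistent, but to conclude finiteness I still need to rule out infinitely many isolated zeros. Here I invoke a compactness-at-infinity argument: the leading homogeneous part of $P$ of degree $n$, say $P_n(z,\conj z) = \sum_j \alpha_{j,n-j} z^j \conj z^{n-j}$, if it is not identically zero as a function, does not vanish on some ray, and more to the point, on a sufficiently large circle $\Gamma_R$ the modulus $\abs{P}$ is bounded below — but this need not hold for general polyanalytic $P$ (e.g. $P = z - \conj z$ vanishes on the whole real axis, but that $P$ has nonisolated zeros, already excluded). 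So the precise statement is: if $P$ has only isolated zeros, then the zero set is a discrete subset of $\C$; if it were infinite it would be unbounded, so there is a sequence $z_k \to \infty$ with $P(z_k,\conj{z_k}) = 0$. Passing to the one-point compactification and using that $P$ extends continuously (as a map to $\widehat{\C}$) — or more elementarily, writing $P(z,\conj z) = \abs{z}^n(\widetilde P(z/\abs z, \ldots) + \cO(\abs z^{-1}))$ — one finds that the angular limit points of $z_k/\abs{z_k}$ on the unit circle are zeros of the homogeneous top-degree form restricted to the circle; that form is a trigonometric polynomial of degree $\le n$, so it has finitely many zeros unless it vanishes identically, and if it vanishes identically then $\deg(P) < n$, contradicting our normalization. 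This forces the zero set to be bounded, hence (being discrete and bounded) finite.

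The main obstacle I anticipate is making the ``local finite union of real-analytic arcs'' claim for $V(p)$ near an accumulation point fully rigorous without invoking heavy real-algebraic-geometry machinery (resolution of curve singularities, or the curve selection lemma of Milnor). The honest shortcut is the curve selection lemma: if $z_0$ is an accumulation point of $V(p) \cap V(q)$, there is a real-analytic arc $\gamma: [0,\eps) \to V(p) \cap V(q)$ with $\gamma(0) = z_0$ and $\gamma((0,\eps))$ consisting of such points; this directly gives an arc on which both $p$ and $q$ vanish, i.e., on which $P \equiv 0$, and one then notes this arc is part of an algebraic arc and can be continued as far as $P$ vanishes. A second, more elementary obstacle is the at-infinity argument in (i): one must phrase the reduction to the top-degree form carefully, since $P$ is not homogeneous and lower-order terms could in principle interfere — but they are $\cO(\abs z^{n-1})$ while $\abs z^n$ times a nonvanishing trigonometric polynomial is $\Theta(\abs z^n)$, so the estimate closes cleanly away from the finitely many bad directions, and near those directions a standard angular-sector argument handles the remaining cases.
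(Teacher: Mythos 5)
The paper does not actually prove this theorem (it is quoted from Balk~\cite[Rem.~4.3, Sect.~2.2]{Balk1991}), so your proposal can only be judged on its own terms. Your part (ii) is essentially sound: writing the zero set as $V(p)\cap V(q)$ and invoking either the local Puiseux-branch structure of a real algebraic curve or, more directly, Milnor's curve selection lemma applied to the semialgebraic set $\bigl(V(p)\cap V(q)\bigr)\setminus\{z_0\}$ does produce a real-analytic arc through the nonisolated zero $z_0$ on which $P$ vanishes (and the curve-selection route even spares you the identity-theorem step and the degenerate cases $p\equiv 0$ or $q\equiv 0$). This is heavier machinery than strictly necessary, but it is correct and you flag it honestly.

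Part (i), however, has a genuine gap at the decisive step. After correctly reducing to ruling out a sequence of zeros $z_k\to\infty$, you argue that the limiting directions $z_k/\abs{z_k}$ must be zeros of the top-degree form restricted to the unit circle, a trigonometric polynomial with finitely many zeros, and conclude ``this forces the zero set to be bounded.'' That is a non sequitur: finitely many admissible asymptotic directions do not prevent infinitely many zeros from escaping to infinity inside narrow sectors around those directions, which is exactly where the leading form degenerates and lower-order terms take over; your promised ``standard angular-sector argument'' is not supplied and cannot be a purely local growth estimate, since for non-polynomial polyanalytic functions (e.g.\ $\conj{z}-\ee^{z}$) unbounded sequences of isolated zeros do occur, so any correct argument must use the algebraic structure globally. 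A clean repair that stays within your framework: bootstrap from (ii) via inversion. Set $R(w,\conj{w})=w^{n}\conj{w}^{\,n}P(1/w,1/\conj{w})$, a polyanalytic polynomial; the zeros $z_k\to\infty$ of $P$ give zeros $w_k=1/z_k\to 0$ of $R$, so $0$ is a nonisolated zero of $R$, and (ii) yields an arc of zeros of $R$ through $0$, whose image under $w\mapsto 1/w$ is an arc of zeros of $P$ --- contradicting the assumption that all zeros of $P$ are isolated. (Alternatively, one may cite that the real algebraic set $\{p^2+q^2=0\}$ has finitely many connected components, each isolated zero being one of them.) As written, though, part (i) is not proved.
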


A polyanalytic polynomial with infinitely many zeros vanishes 
on some arc in $\C$.  If the polynomial is irreducible we have a necessary 
condition for this; see~\cite[p.~20]{Davis1974}.  Recall that a non-constant
polyanalytic polynomial is \emph{irreducible}, if it cannot be factored into 
the product of two non-constant polyanalytic polynomials, and \emph{reducible} 
otherwise.

\begin{theorem}
Let $P$ be an irreducible polyanalytic polynomial.  If $P$ has infinitely 
many zeros, then
\begin{equation}\label{eqn:self_conjugate}
P(z,\conj{z}) = \lambda \conj{P(z,\conj{z})}
\end{equation}
holds for some $\lambda \in \C$ with $\abs{\lambda} = 1$.
\end{theorem}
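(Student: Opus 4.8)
The plan is to analyze when a polyanalytic polynomial $P$ vanishes on an arc, which by Theorem~\ref{thm:isolated_finite} is equivalent to having infinitely many zeros. The key algebraic fact is that $P$ and $\conj{P(z,\conj z)}$ — where the latter means $\sum \conj{\alpha_{j,k-j}} \conj z^j z^{k-j}$, again a polyanalytic polynomial — share a common nontrivial factor exactly when $P$ vanishes on an arc. Concretely, suppose $P$ vanishes on an arc $\gamma$. Then $\conj{P(z,\conj z)}$ also vanishes on $\gamma$, since $|P|^2 = P \cdot \conj{P}$ vanishes there too, but more directly because $P(z,\conj z) = 0$ for $z \in \gamma$ forces $\conj{P(z,\conj z)} = 0$ for $z \in \gamma$. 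So both $P$ and $\conj{P(z,\conj z)}$ vanish on the infinite set $\gamma$. Here I would invoke the cited result of B\^ocher/Davis: two polyanalytic polynomials that vanish on a common arc must have a common divisor of positive degree. Since $P$ is irreducible, this common divisor must be $P$ itself (up to a constant), so $P \mid \conj{P(z,\conj z)}$. Because $\deg(P) = \deg(\conj{P(z,\conj z)})$ (conjugation of coefficients and swapping the roles of $z,\conj z$ preserves all three degrees), the quotient is a constant $\lambda$, giving $\conj{P(z,\conj z)} = \tfrac1\lambda P(z,\conj z)$, i.e.\ \eqref{eqn:self_conjugate} with $1/\lambda$ in place of $\lambda$; relabeling, $P = \lambda \conj{P}$.

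For the modulus condition I would argue as follows. Applying conjugation to \eqref{eqn:self_conjugate} gives $\conj{P(z,\conj z)} = \conj\lambda \, P(z,\conj z)$ (using that conjugating twice is the identity on polyanalytic polynomials). Substituting back into \eqref{eqn:self_conjugate} yields $P = \lambda \conj\lambda P = |\lambda|^2 P$, and since $P \not\equiv 0$ we get $|\lambda| = 1$.

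For the converse, suppose $P = \lambda \conj{P(z,\conj z)}$ with $|\lambda| = 1$. Write $\lambda = \mu^2$ with $|\mu| = 1$ (a square root on the unit circle), and set $Q = \conj\mu \, P$. A short computation shows $\conj{Q(z,\conj z)} = \mu \conj{P(z,\conj z)} = \mu \cdot \mu^{-2} P = \conj\mu P = Q$, so $Q$ is self-conjugate, meaning all its coefficients satisfy a reality-type symmetry; in particular $Q(x,x)$ (i.e.\ $P$ evaluated with $z = \conj z = x$, $x \in \R$) is real-valued in the coefficients of $Q$, hence $Q$ restricted to the real axis is a real-valued function of the real variable $x$. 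Actually the cleaner route: a self-conjugate polyanalytic polynomial $Q$ satisfies $Q(z,\conj z) \in \R$ whenever $z \in \R$, so on $\R$ the imaginary part $q$ in \eqref{eqn:real_polynomial_system} vanishes identically, while the real part $p$ is a real polynomial in one real variable $x$ with at least $\deg(P) \geq 1$ real zeros unless it is constant — and if $Q$ is nonconstant and irreducible it cannot be a nonzero constant on $\R$, so $p$ vanishes at some point, actually one must show it vanishes on a whole arc. The honest statement is that self-conjugacy forces $P$ to vanish on the real axis after an affine change of variable, or on a curve; I would derive this from the factorization literature: an irreducible self-conjugate polyanalytic polynomial defines a real algebraic curve via $p = q = 0$ which, by self-conjugacy, reduces to a single real equation and hence is one-dimensional, i.e.\ an arc. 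Then Theorem~\ref{thm:isolated_finite}(i) gives infinitely many zeros.

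The main obstacle is the converse direction, specifically showing that self-conjugacy forces the zero set to be infinite rather than merely nonempty. The forward direction is essentially a clean application of the B\^ocher--Davis common-factor theorem plus the degree count, and the $|\lambda| = 1$ part is a one-line computation. For the converse I expect to need the structure of self-conjugate polynomials: after writing $P = \mu Q$ with $Q$ self-conjugate, the real and imaginary parts of $Q$ on $\C \cong \R^2$ are not independent — self-conjugacy makes $q \equiv 0$ on the real line and more generally ties $p$ and $q$ together so that the common zero set $\{p = 0\} \cap \{q = 0\}$ cannot be zero-dimensional for an irreducible $Q$. I would make this precise by noting that an irreducible polyanalytic polynomial with only isolated zeros would, by unique factorization, have $P$ and $\conj P$ coprime (no common arc), contradicting \eqref{eqn:self_conjugate} which makes them associates; hence the zeros are nonisolated, and by Theorem~\ref{thm:isolated_finite}(ii) $P$ vanishes on an arc, so by part (i)'s contrapositive the zero set is infinite.
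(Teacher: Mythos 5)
The paper gives no proof of this theorem at all --- it is quoted from the literature with pointers to Davis and B\^{o}cher --- so there is no internal argument to compare against, and your proposal has to stand on its own. Its necessity half does: if $P$ has infinitely many zeros, then $P$ and the conjugate polynomial $\conj{P}$, viewed as elements of $\C[z,w]$ via $w=\conj{z}$, have infinitely many common zeros on the set $\{(z,\conj{z}):z\in\C\}$; coprime bivariate polynomials have only finitely many common zeros, so irreducibility forces $P\mid\conj{P}$, equality of total degrees makes the quotient a nonzero constant, and your involution computation correctly yields $\abs{\lambda}=1$. (You do not even need the arc from Theorem~\ref{thm:isolated_finite}; an infinite zero set already suffices.)

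The genuine gap is the sufficiency direction, and it cannot be closed along the lines you sketch. Your pivotal step --- that an irreducible $P$ with only isolated zeros would, ``by unique factorization,'' have $P$ and $\conj{P}$ coprime --- is unsupported: it is precisely the converse of the fact you used above (coprime implies finitely many common zeros), and nothing prevents $P$ and $\conj{P}$ from being associates while the zero set of $P(z,\conj z)$ is finite or even empty. Indeed, sufficiency fails as literally stated: $P(z,\conj{z})=z\conj{z}+1$ (the paper's own example $P_4$ in the introduction) is irreducible, satisfies \eqref{eqn:self_conjugate} with $\lambda=1$, and has no zeros; moreover the Motzkin polynomial $x^4y^2+x^2y^4-3x^2y^2+1$, rewritten in $z,\conj{z}$, is irreducible over $\C$, self-conjugate, and has exactly four isolated zeros, so even insisting on a nonempty zero set does not rescue the implication. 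This is exactly where your heuristic ``self-conjugacy reduces $p=q=0$ to a single real equation, hence the zero set is one-dimensional'' breaks down: a single real polynomial equation in the plane can have an empty or zero-dimensional solution set. What the cited sources actually support is the necessity statement you proved; a correct sufficiency statement needs an additional hypothesis, for instance that the real-valued polynomial $\conj{\mu}P$ (your $Q$, with $\mu^2=\lambda$) takes both signs, which does force infinitely many zeros. So your honest flag on the converse was well placed: the obstacle there is not a missing trick but the statement itself.
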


A polynomial $P$ satisfying~\eqref{eqn:self_conjugate} is called 
\emph{self-conjugate}~\cite[p.~21]{Davis1974}.  
We obtain the following sufficient condition for $P$ having finitely many zeros.

\begin{theorem} \label{thm:suff_finite_zeros}
Let $P(z, \conj{z}) = \sum_{j+k \leq n} \alpha_{j,k} z^j \conj{z}^k$ be an 
irreducible polyanalytic polynomial with $\abs{\alpha_{j,k}} \neq 
\abs{\alpha_{k,j}}$ for some $j,k$.
Then $P$ has finitely many zeros.
\end{theorem}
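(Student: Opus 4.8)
The strategy is to argue by contraposition using the characterization of irreducible polyanalytic polynomials with infinitely many zeros from the preceding theorem. Suppose $P$ is irreducible but has infinitely many zeros. Then there is a unimodular $\lambda \in \C$ with $P(z,\conj z) = \lambda\,\conj{P(z,\conj z)}$. The plan is to expand both sides in the bivariate form and compare coefficients, and then show this forces $\abs{\alpha_{j,k}} = \abs{\alpha_{k,j}}$ for \emph{all} pairs $(j,k)$, contradicting the hypothesis.

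First I would compute $\conj{P(z,\conj z)}$ explicitly. Writing $P(z,\conj z) = \sum_{j+k \le n} \alpha_{j,k} z^j \conj z^k$, taking the complex conjugate of the value gives $\conj{P(z,\conj z)} = \sum_{j+k \le n} \conj{\alpha_{j,k}}\, \conj z^j z^k = \sum_{j+k\le n} \conj{\alpha_{k,j}}\, z^j \conj z^k$, after swapping the roles of the summation indices $j$ and $k$. Since two polyanalytic polynomials agree if and only if their coefficients in the bivariate form coincide (as noted after~\eqref{eqn:papolynomial}), the identity $P = \lambda \conj P$ is equivalent to
\begin{equation*}
\alpha_{j,k} = \lambda\, \conj{\alpha_{k,j}} \qquad \text{for all } j,k \text{ with } j+k \le n.
\end{equation*}
Taking absolute values immediately yields $\abs{\alpha_{j,k}} = \abs{\alpha_{k,j}}$ for every such pair, which contradicts the assumption that $\abs{\alpha_{j,k}} \neq \abs{\alpha_{k,j}}$ for some $j,k$. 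Hence $P$ cannot have infinitely many zeros; being irreducible, its zeros are then isolated, and by Theorem~\ref{thm:isolated_finite} their number is finite.

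The argument is essentially a bookkeeping computation, so there is no serious obstacle; the only point requiring care is the index swap in conjugating $P$ and the clean invocation of the uniqueness of bivariate coefficients, together with noting that $\lambda$ drops out once we pass to moduli. One should also remark that the hypothesis is genuinely about the \emph{bivariate} coefficients $\alpha_{j,k}$, so no normalization or change of form is needed. It is worth pointing out, as a remark after the proof, that this sufficient condition is not necessary: a polynomial may have all $\abs{\alpha_{j,k}} = \abs{\alpha_{k,j}}$ and still have finitely many zeros (for instance if it is self-conjugate only up to a factor that is ruled out, or if it simply fails to be self-conjugate because the phases are wrong).
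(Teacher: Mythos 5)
Your argument is correct and is exactly the reasoning the paper intends: the theorem is stated as an immediate consequence of the preceding characterization of irreducible polyanalytic polynomials with infinitely many zeros, and your coefficient comparison $\alpha_{j,k} = \lambda\,\conj{\alpha_{k,j}}$ with $\abs{\lambda}=1$ is the right bookkeeping (the final appeal to Theorem~\ref{thm:isolated_finite} is unnecessary, since ``not infinitely many'' already means finitely many).
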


Next, we show that polyanalytic polynomials of the form $z^n + P_{n-1}(z, 
\conj{z})$ have a factorization into factors of the same form, and thus only 
finitely many zeros.
Related results for bivariate polynomials $y^n + \sum_{k=0}^{n-1} p_k(x) y^k$ 
over an algebraically closed field are 
obtained in~\cite{PanaitopolStefanescu1990}.  These polynomials are called 
\emph{generalized difference polynomials}, while $p(x)-q(y)$ is called a
\emph{difference polynomial}.  The latter is the bivariate 
pendant of a harmonic polynomial.
The next result can be found in~\cite[Sect.~1.2.5]{Sheil-Small2002}.
For completeness, we give an alternative proof.

\begin{theorem} \label{thm:finite_zeros}
Let $P(z,\conj{z}) = z^n + P_{n-1}(z,\conj{z})$ be a polyanalytic polynomial 
with $\deg(P_{n-1}) \le n-1$.  Then the following holds.
\begin{enumerate}
\item \label{it:factorization}
$P$ has a factorization into irreducible polyanalytic polynomials of the 
form $z^k + P_{k-1}(z, \conj{z})$ with $\deg(P_{k-1}) \leq k-1$.
\item \label{it:finitely_many_zeros}
$P$ has finitely many zeros.
\end{enumerate}
\end{theorem}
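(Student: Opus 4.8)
The plan is to regard $P$ as an element of the polynomial ring $\C[z,\conj z]$ in two formal variables, which is a unique factorization domain whose units are exactly the nonzero constants; consequently a polyanalytic polynomial is reducible in the sense defined above precisely when it is reducible in $\C[z,\conj z]$. I grade $\C[z,\conj z]$ by total degree, assigning degree $1$ to both $z$ and $\conj z$, and for nonzero $R$ write $\mathrm{lf}(R)$ for the homogeneous component of $R$ of top degree. Since $\deg(P_{n-1})\le n-1$, every monomial of $P$ other than $z^n$ has total degree at most $n-1$, so $\mathrm{lf}(P)=z^n$.

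For part~\ref{it:factorization}, factor $P=Q_1\cdots Q_r$ into irreducibles in $\C[z,\conj z]$; each $Q_i$ is then an irreducible polyanalytic polynomial with $k_i\defby\deg(Q_i)\ge 1$. Because $\C[z,\conj z]$ is a domain, $\mathrm{lf}(FG)=\mathrm{lf}(F)\,\mathrm{lf}(G)$ for nonzero $F,G$, so $z^n=\mathrm{lf}(P)=\mathrm{lf}(Q_1)\cdots\mathrm{lf}(Q_r)$ with each $\mathrm{lf}(Q_i)$ homogeneous of degree $k_i$. Since $z$ is a prime element of the UFD $\C[z,\conj z]$, unique factorization forces each $\mathrm{lf}(Q_i)$ to have the form $c_i z^{k_i}$ with $c_i\in\C\setminus\{0\}$, $\prod_i c_i=1$, and $\sum_i k_i=n$. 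Replacing $Q_i$ by $c_i^{-1}Q_i$ leaves both the product and the irreducibility unchanged and makes $\mathrm{lf}(Q_i)=z^{k_i}$, i.e.\ $Q_i=z^{k_i}+P_{k_i-1}(z,\conj z)$ with $\deg(P_{k_i-1})\le k_i-1$. This is the asserted factorization.

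Part~\ref{it:finitely_many_zeros} is then immediate: the zero set of $P$ is the union of the zero sets of $Q_1,\dots,Q_r$, so it suffices to see that each $Q_i=z^{k_i}+P_{k_i-1}$ has finitely many zeros. In $Q_i$ the coefficient of $z^{k_i}$ equals $1$, whereas the coefficient of $\conj z^{k_i}$ equals $0$, because that monomial has total degree $k_i$ while $P_{k_i-1}$ contributes only monomials of total degree at most $k_i-1$. Hence Theorem~\ref{thm:suff_finite_zeros} applies to the irreducible polynomial $Q_i$ (with the index pair $(k_i,0)$) and yields that $Q_i$ has finitely many zeros. Equivalently, if $Q_i$ had infinitely many zeros it would be self-conjugate, $Q_i=\lambda\conj{Q_i}$ with $\abs{\lambda}=1$, and comparing the degree-$k_i$ homogeneous parts would give $z^{k_i}=\lambda\conj z^{k_i}$, which is absurd for $k_i\ge 1$.

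Most of the work here is bookkeeping: verifying that total degree makes $\C[z,\conj z]$ a graded domain so that leading forms multiply, and matching the two notions of (ir)reducibility. The one genuinely delicate point in part~\ref{it:factorization} is concluding that each $\mathrm{lf}(Q_i)$ is a \emph{pure} power of $z$, rather than a mixed monomial or a power of $\conj z$; this is exactly where the primeness of $z$ in the UFD $\C[z,\conj z]$ together with the explicit identity $\mathrm{lf}(P)=z^n$ enters. Granting that, both statements follow from tools already available, namely unique factorization in $\C[z,\conj z]$ and Theorem~\ref{thm:suff_finite_zeros}.
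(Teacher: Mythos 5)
Your proof is correct, but for part~\ref{it:factorization} you take a genuinely different route from the paper. The paper argues directly on coefficients: given a putative factorization $P = P_\ell P_m$, it compares the degree-$n$ homogeneous parts, obtains the convolution relations $a_0 b_0 = 1$ and $\sum_{j=0}^k a_j b_{k-j} = 0$, packages them into auxiliary univariate polynomials satisfying $a(t)b(t)=1$, concludes that the top forms are $a_0 z^\ell$ and $b_0 z^m$, and then reaches irreducible factors of the stated shape by induction. You instead work in the formal ring $\C[z,\conj z]$, take a factorization into irreducibles there (so no induction is needed), and use that leading forms multiply in a graded domain together with the primeness of $z$ to force each leading form to be $c_i z^{k_i}$, after which rescaling by the $c_i$ (whose product is $1$) gives the normalized factors. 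Both proofs hinge on the same structural fact---every factor of $P$ has top homogeneous part a constant multiple of a pure power of $z$---but the paper establishes it by an elementary, self-contained coefficient computation, whereas you import unique factorization in $\C[z,\conj z]$ and must (as you correctly do) match the paper's notion of (ir)reducibility, which uses degree $\geq 1$ factors, with the ring-theoretic one, relying on the fact that a polyanalytic polynomial is determined by its coefficients; in exchange you get the existence of the irreducible factorization and the delicate ``pure power of $z$'' step essentially for free from the UFD structure. Part~\ref{it:finitely_many_zeros} is the same in both treatments: each irreducible factor $z^k + P_{k-1}$ has $\abs{\alpha_{k,0}} = 1 \neq 0 = \abs{\alpha_{0,k}}$, so Theorem~\ref{thm:suff_finite_zeros} gives finitely many zeros per factor, and your added self-conjugacy remark is a valid alternative justification of the same point.
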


\begin{proof}
\ref{it:factorization}
If $P$ is reducible, then $P = P_\ell P_m$, where $\ell = \deg(P_\ell) 
\ge 1$ and $m = \deg(P_m) \ge  1$ with $\ell + m = n$.  We write
\begin{equation*}
P_\ell(z,\conj{z})
= \sum_{j=0}^\ell a_j z^{\ell - j} \conj{z}^j + P_{\ell-1}(z,\conj{z}), \quad
P_m(z,\conj{z}) = \sum_{k=0}^m b_k z^{m-k} \conj{z}^k + P_{m-1}(z,\conj{z}),
\end{equation*}
where $\deg(P_{\ell-1}) \leq \ell-1$ and $\deg(P_{m-1}) \leq m-1$.  The terms 
of degree $n$ of $P$ can only be obtained by multiplication of terms of degree 
$\ell$ of $P_\ell$ and terms of degree $m$ of $P_m$.  Comparing the terms of 
degree $n$ of $P$ and $P_\ell P_m$ gives
\begin{equation*}
z^n = \Bigg( \sum_{j=0}^\ell a_j z^{\ell - j} \conj{z}^j \Bigg)
\Bigg( \sum_{k=0}^m b_k z^{m - k} \conj{z}^k \Bigg)
= \sum_{k=0}^n \Bigg( \sum_{j=0}^k a_j b_{k-j} \Bigg) z^{n-k} \conj{z}^k.
\end{equation*}
Comparing coefficients yields
\begin{equation*}
a_0 b_0 = 1 \quad \text{and} \quad \sum_{j=0}^k a_j b_{k-j} = 0 \quad
\text{for} \quad k = 1, \ldots, n.
\end{equation*}
The auxiliary polynomials $a(t) = \sum_{j = 0}^\ell a_j t^j$ and $b(t) = 
\sum_{k=0}^m b_k t^k$ satisfy $a(t) b(t) = 1$, hence $a$ and $b$ are constant 
nonzero polynomials, and
\begin{equation*}
P_\ell(z,\conj{z}) = a_0 z^\ell + P_{\ell-1}(z,\conj{z}), \quad
P_m(z,\conj{z}) = b_0 z^m + P_{m-1}(z,\conj{z}).
\end{equation*}
Using $a_0 b_0 = 1$, we obtain the factorization
\begin{equation*}
P(z, \conj{z}) = ( z^\ell + a_0^{-1} P_{\ell-1}(z, \conj{z}) ) ( z^m + 
b_0^{-1}P_{m-1}(z, \conj{z}) ).
\end{equation*}
Inductively, $P$ is a product of irreducible polyanalytic polynomials of the 
form $z^k + P_{k-1}(z, \conj{z})$ with $\deg(P_{k-1}) \leq k-1$.

\ref{it:finitely_many_zeros} 
By (i), $P$ is the product of irreducible polynomials of the form $z^k + 
P_{k-1}(z, \conj{z})$ with $\deg(P_{k-1}) \leq k-1$, and by 
Theorem~\ref{thm:suff_finite_zeros}, each of these factor has only finitely 
many zeros, since $\alpha_{k,0} = 1 \neq 0 = \alpha_{0,k}$.
\end{proof}

The above theorem is stated for $P(z,\conj{z}) = z^n + P_{n-1}(z,\conj{z})$.  
For $Q(z,\conj{z}) = \conj{z}^n + P_{n-1}(z,\conj{z})$ we get the 
same result by considering $\conj{Q(z,\conj{z})}$.  The condition in 
Theorem~\ref{thm:finite_zeros} is sharp in the sense that we can not weaken the 
assumptions in terms of the monomials $z^k\conj{z}^{n-k}$ of degree $n$.
Two monomials of degree $n$ with nonzero coefficients allow infinitely many 
zeros, e.g., for $0 \leq k < \ell \leq n$, the polyanalytic polynomial 
\begin{equation}
P(z,\conj{z}) = z^k\conj{z}^{n-k} + z^\ell\conj{z}^{n-\ell} = z^k (z^{\ell-k}
+ \conj{z}^{\ell-k}) \conj{z}^{n-\ell}
\end{equation}
has infinitely many zeros.  
Moreover, a mixed monomial of degree $n$ may also lead to infinitely many 
zeros, e.g., for $0<k<n$, the polyanalytic polynomial
\begin{equation}
P(z,\conj{z}) = z^k\conj{z}^{n-k} - z^{k-1}\conj{z}^{n-k-1}
= z^{k-1}\conj{z}^{n - k-1}(z\conj{z}-1)
\end{equation}
vanishes on the unit circle.

To answer the third question, we follow the lines of Wilmshurst and use a 
special version of B\'ezout's theorem; 
see, e.g.,~\cite[Thm.~2]{Wilmshurst1998}, \cite[p.~204]{PeretzSchmid1997}, 
or~\cite[1.2.2]{Sheil-Small2002}.

\begin{theorem}\label{thm:bezout}
Let $p$ and $q$ be real polynomials in the real variables $x$ and $y$ with 
$\deg(p) = n$ and $\deg(q) = m$.  Then $p$ and $q$ have either at most $m n$ 
common zeros or have infinitely many common zeros.
\end{theorem}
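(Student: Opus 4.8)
The plan is to derive this real, affine statement from the classical projective B\'ezout theorem over~$\C$, passing through $\C$ and then back to~$\R$. First I would reduce to the coprime case: let $d$ be the greatest common divisor of $p$ and $q$ in $\R[x,y]$ and write $p = d p_1$, $q = d q_1$ with $p_1,q_1$ coprime in $\R[x,y]$. If $d$ is non-constant and has infinitely many real zeros, these are common zeros of $p$ and $q$ and we are in the second alternative; so assume henceforth that $d$ has only finitely many real zeros. The common real zeros of $p$ and $q$ are exactly the real zeros of $d$ together with the common real zeros of $p_1$ and $q_1$, and since
\[
\deg(d)\bigl(\deg(d)-1\bigr) + \deg(p_1)\deg(q_1) \le \bigl(\deg(d)+\deg(p_1)\bigr)\bigl(\deg(d)+\deg(q_1)\bigr) = n m,
\]
it suffices to bound the number of real zeros of $d$ by $\deg(d)(\deg(d)-1)$ and the number of common real zeros of $p_1,q_1$ by $\deg(p_1)\deg(q_1)$; the same count with $d$ constant covers the base case $\gcd(p,q)=1$.

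For the second bound I would use a transfer of coprimality: if two real polynomials are coprime in $\R[x,y]$, they are coprime in $\C[x,y]$ as well. Indeed, from an irreducible common factor $h$ over~$\C$ one obtains, by conjugating coefficients and using that $p_1,q_1$ are real, a second common factor $\conj{h}$; whether or not $h$ and $\conj{h}$ are associates, one extracts a non-constant common factor with \emph{real} coefficients, contradicting coprimality over~$\R$. Hence $p_1$ and $q_1$ are coprime over~$\C$. Homogenising them, the homogenisations still contain a term free of the homogenising variable (their total degrees being attained), so the two complex projective plane curves share no component; projective B\'ezout then yields at most $\deg(p_1)\deg(q_1)$ intersection points, and the common real zeros of $p_1,q_1$ form a subset of these. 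Applying the same argument to $p$ and $q$ directly settles the case $\deg(d)=0$.

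It remains to bound the number of real zeros of $d$ when it is finite, and here the genuinely real input enters: every isolated real zero of a real polynomial $g$ is a critical point of~$g$. Indeed, a small punctured closed disc around such a point is connected and contains no other zero of~$g$, hence $g$ has constant sign on it, the point is a strict local extremum, and $\nabla g$ vanishes there. Replacing $d$ by its squarefree part (same real zeros, degree not increased) and rotating the coordinate axes generically so that no irreducible factor of $d$ lies in $\R[y]$, one checks that $d$ and $\partial_x d$ are then coprime in $\R[x,y]$, hence in $\C[x,y]$; since every real zero of $d$ also annihilates $\partial_x d$, B\'ezout bounds their number by $\deg(d)\cdot(\deg(d)-1)$, which closes the estimate.

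The main obstacle is exactly this last step, and it is the place where the hypothesis that $p$ and $q$ are \emph{real} is essential: over $\C$ a non-constant polynomial has infinitely many zeros, so the dichotomy ``finite $\Rightarrow$ at most $nm$'' cannot be read off from complex B\'ezout alone and requires the semialgebraic/topological fact about sign behaviour near isolated real zeros. Apart from this, I would only need to dispose of routine degenerate cases -- $n$ or $m$ equal to $0$, or $p$ or $q$ not involving $y$, so that the coordinate rotations and the use of $\partial_x$ have to be adjusted -- none of which affects the bound.
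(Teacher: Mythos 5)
Your argument is essentially correct; note, however, that the paper itself gives no proof of Theorem~\ref{thm:bezout}: it is quoted as a known ``special version of B\'ezout's theorem'' with pointers to Wilmshurst, Peretz--Schmid, and Sheil-Small, so there is no in-paper argument to match, and the cited sources obtain it by classical resultant/B\'ezout-type reasoning. Your route is a legitimate self-contained alternative: split off $d=\gcd(p,q)$ in $\R[x,y]$; transfer coprimality of $p_1,q_1$ from $\R[x,y]$ to $\C[x,y]$ by conjugating a putative common complex factor (either $h\conj{h}$ or a unimodular multiple of $h$ is real); apply projective B\'ezout over $\C$ to bound the common zeros of the coprime pair by $\deg p_1\deg q_1$; and control the real zeros of $d$, when finitely many, by the genuinely real observation that an isolated real zero of a real polynomial is a local extremum and hence a critical point, so that after passing to the squarefree part and a generic rotation these zeros lie among the at most $\deg d\,(\deg d-1)$ common zeros of $d$ and $\partial_x d$. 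The arithmetic $\deg d\,(\deg d-1)+\deg p_1\deg q_1\le nm$ then closes the estimate, and you rightly identify the critical-point step as the place where the real dichotomy goes beyond complex B\'ezout, since the common factor $d$ may have a finite or empty real zero set. In a complete write-up only routine points need spelling out: coprimality of the affine polynomials implies coprimality of their homogenizations because the homogenizing variable divides neither; a generic rotation leaves no irreducible factor of the squarefree part free of $x$ (for a fixed nonconstant factor at most one direction of differentiation annihilates it identically, otherwise the factor would be constant); and the degenerate situations ($\deg p_1=0$ or $\deg q_1=0$, or $\partial_x$ of the squarefree part constant) where the bounds hold trivially.
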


This gives immediately the maximum number of zeros for polyanalytic polynomials 
with finitely many zeros.

\begin{theorem} \label{thm:upper_bound}
A polyanalytic polynomial $P$ of degree $n$ with finitely many zeros has at 
most $n^2$ zeros.
\end{theorem}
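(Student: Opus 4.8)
The plan is to read the bound straight off B\'ezout's theorem (Theorem~\ref{thm:bezout}) after passing to the associated real system. First I would write $P$ in the bivariate form~\eqref{eqn:papolynomial}, substitute $z = x + \ii y$ and $\conj z = x - \ii y$, and split into the pair of real polynomials $p = \re(P(x+\ii y, x-\ii y))$ and $q = \im(P(x+\ii y, x-\ii y))$ in the real variables $x,y$, exactly as in~\eqref{eqn:real_polynomial_system}. Under this substitution each monomial $z^j \conj z^{k-j}$ becomes a polynomial of total degree $k$ in $x$ and $y$, and $k \le n$ for every term appearing in $P$, so both $p$ and $q$ have degree at most $n$. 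By construction the zeros of $P$ are precisely the common zeros of $p$ and $q$.

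Then I would simply invoke Theorem~\ref{thm:bezout}: since $P$ has only finitely many zeros, $p$ and $q$ do not have infinitely many common zeros, hence they have at most $\deg(p) \deg(q) \le n \cdot n = n^2$ common zeros. That is the entire argument.

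The one point that needs care — and what I expect to be the only obstacle — is that Theorem~\ref{thm:bezout} is stated for honest polynomials with $\deg(p) = n$ and $\deg(q) = m$, whereas a priori the real or imaginary part of $P(x+\ii y, x-\ii y)$ could vanish identically, or drop below degree $n$, if the top-degree part of $P$ happens to become purely real or purely imaginary after the substitution (for instance $P = \ii(z+\conj z)$). I would dispose of this by replacing $P$ with $\lambda P$ for a suitably chosen $\lambda \in \C \setminus \{0\}$, noting that $\lambda P$ has the same degree and the same zero set as $P$, so the bound transfers. Concretely, the degree-$n$ part $h$ of $P(x+\ii y, x-\ii y)$ is a nonzero homogeneous polynomial of degree $n$ because $z \mapsto x+\ii y$, $\conj z \mapsto x-\ii y$ is an invertible linear change of coordinates, and $\re(\lambda h)$ (respectively $\im(\lambda h)$) vanishes identically only for $\lambda$ in a single real line through the origin; choosing $\lambda$ outside these at most two lines makes both $\re(\lambda P)$ and $\im(\lambda P)$ of degree exactly $n$, so Theorem~\ref{thm:bezout} applies. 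The remaining case $n = 0$ is trivial, since then $P$ is a nonzero constant with no zeros. Everything else in the proof is immediate.
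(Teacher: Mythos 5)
Your proposal is correct and follows essentially the same route as the paper, which simply applies Theorem~\ref{thm:bezout} to the equivalent real system~\eqref{eqn:real_polynomial_system}. The only difference is your extra normalization by a constant $\lambda$ to ensure both real polynomials have degree exactly $n$ --- a legitimate technical point that the paper's one-line proof leaves implicit.
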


\begin{proof}
Applying Theorem~\ref{thm:bezout} to the equivalent 
system~\eqref{eqn:real_polynomial_system} gives the upper bound.
\end{proof}

If a polyanalytic polynomial is reducible, we give a tighter bound for the
number of zeros.  This yields also a sufficient condition for 
irreducibility of polyanalytic polynomials.

\begin{theorem} \label{thm:irreducible}
Let $P$ be a polyanalytic polynomial of degree $n$ with finitely many zeros.
\begin{enumerate}
\item \label{it:factors}
If $P$ is reducible, i.e., $P = P_{1}\cdot \ldots \cdot P_{k}$ with $\deg(P_j) 
= n_j \geq 1$ and $k \geq 2$, then $P$ has at most $\sum_{j = 1}^k n_j^2 < n^2$
zeros.

\item \label{it:criterion_irreducible}
Let $n \ge 2$. If $P$ has at least $n^2 - 2n + 3$ zeros, then $P$ is 
irreducible.
\end{enumerate}
\end{theorem}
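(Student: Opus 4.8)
The plan is to derive both parts from Theorem~\ref{thm:upper_bound} together with the additivity of the degree under multiplication. For \ref{it:factors}, I would first observe that $\deg$ is additive on products of polyanalytic polynomials: writing $H_j$ for the top-degree homogeneous part of $P_j$ (a nonzero element of $\C[z,\conj{z}]$ of degree $n_j$), the top-degree part of $P = P_1 \cdots P_k$ is $H_1 \cdots H_k$, which is nonzero because $\C[z,\conj{z}]$ is an integral domain; hence $n = \deg(P) = \sum_{j=1}^k n_j$. Next, the zero set of $P$ is the union of the zero sets of the factors, so each $P_j$ has finitely many zeros (being a subset of the finite zero set of $P$), and Theorem~\ref{thm:upper_bound} bounds the number of zeros of $P_j$ by $n_j^2$; summing gives at most $\sum_{j=1}^k n_j^2$ zeros for $P$. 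Finally, since $k \ge 2$ and every $n_j \ge 1$, expanding $n^2 = (\sum_j n_j)^2 = \sum_j n_j^2 + \sum_{i \ne j} n_i n_j$ and noting that the cross terms are strictly positive yields the strict inequality $\sum_{j=1}^k n_j^2 < n^2$.

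For \ref{it:criterion_irreducible} I would argue by contraposition. Suppose $P$ is reducible, $P = P_1 P_2$ with $n_1 = \deg(P_1) \ge 1$, $n_2 = \deg(P_2) \ge 1$, and (by the additivity above) $n_1 + n_2 = n$. By part \ref{it:factors}, $P$ has at most $n_1^2 + n_2^2$ zeros. Since $t \mapsto t^2 + (n-t)^2$ is convex on $[1, n-1]$, it attains its maximum at the endpoints, so $n_1^2 + n_2^2 \le 1 + (n-1)^2 = n^2 - 2n + 2$. Thus a reducible polyanalytic polynomial of degree $n$ has at most $n^2 - 2n + 2$ zeros, and consequently any $P$ with at least $n^2 - 2n + 3$ zeros must be irreducible.

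The individual steps are routine; the one place that deserves care rather than a one-line citation is the additivity $\deg(P_1 \cdots P_k) = \sum_j \deg(P_j)$, i.e.\ the absence of cancellation among the highest-degree terms. This is exactly what the integral-domain remark handles (the leading forms lie in $\C[z,\conj{z}]$, which has no zero divisors). Everything else then follows at once from Theorem~\ref{thm:upper_bound} and the elementary inequality $\sum_j n_j^2 \le 1 + (n-1)^2$, valid for any decomposition of $n$ into $k \ge 2$ positive integer parts.
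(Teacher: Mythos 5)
Your proof is correct and follows essentially the same route as the paper: apply Theorem~\ref{thm:upper_bound} to each factor and sum for \ref{it:factors}, then use the elementary bound $k^2+(n-k)^2 \le 1+(n-1)^2 < n^2-2n+3$ for \ref{it:criterion_irreducible}. The extra details you supply (degree additivity via the nonvanishing product of leading forms in $\C[z,\conj{z}]$, and the finiteness of each factor's zero set as a subset of the zero set of $P$, which is needed to invoke Theorem~\ref{thm:upper_bound}) are correct fill-ins of steps the paper leaves implicit.
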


\begin{proof}
\ref{it:factors} By Theorem~\ref{thm:upper_bound}, the factor $P_{j}$ has at 
most $n_j^2$ zeros.  Summing over all factors gives the assertion.

\ref{it:criterion_irreducible}
Assume that $P$ is reducible, i.e., $P = P_1 P_2$ with 
$\deg(P_1) = k > 0$ and $\deg(P_2) = n-k > 0$.
By~\ref{it:factors}, the number of zeros of $P$ is bounded by
$(n - k)^2 + k^2 < n^2 - 2n + 3$.
\end{proof}

\begin{corollary} \label{cor:wilmshurst}
\begin{enumerate}
\item Let $P(z, \conj{z}) = \alpha_{n,0} z^n + P_{n-1}(z, \conj{z})$ be a 
polyanalytic polynomial with $\alpha_{n,0} \neq 0$ 
and $\deg(P_{n-1}) \leq n-1$.
Then $P$ has at most $n^2$ zeros.

\item Let $P(z, \conj{z}) = p(z) + \conj{q(z)}$ be a harmonic polynomial with 
$\deg(p) = n > \deg(q)$.  Then $P$ has at most $n^2$ zeros.
\end{enumerate}
\smallskip
\noindent
Moreover, in both \textup{(i)} and \textup{(ii)}, if $P$ has $n^2$ zeros, then 
$P$ is irreducible.
\end{corollary}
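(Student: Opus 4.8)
The plan is to deduce both statements from the results already established for polyanalytic polynomials of the normalized shape $z^n + P_{n-1}(z,\conj z)$, and then to read off irreducibility from the factor bound.

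For \textup{(i)}, I would divide $P$ by its nonzero leading coefficient $\alpha_{n,0}$: this leaves the zero set unchanged and yields a polyanalytic polynomial $\alpha_{n,0}^{-1} P$ of the form $z^n + \widetilde P_{n-1}(z,\conj z)$ with $\deg(\widetilde P_{n-1}) \le n-1$. Theorem~\ref{thm:finite_zeros}\ref{it:finitely_many_zeros} then shows that this polynomial, and hence $P$, has finitely many zeros, and Theorem~\ref{thm:upper_bound} bounds that number by $n^2$.

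For \textup{(ii)}, the point is that a harmonic polynomial $P(z,\conj z) = p(z) + \conj{q(z)}$ with $\deg(p) = n > m = \deg(q)$, written in bivariate form, contains only the monomials $a_j z^j$ (for $0 \le j \le n$, coming from $p$) and $\conj{b_k}\conj z^k$ (for $0 \le k \le m$, coming from $\conj q$); since every anti-analytic monomial has degree $k \le m < n$ and there are no mixed monomials, the unique monomial of degree $n$ is the leading term $a_n z^n$ of $p$, with $a_n \neq 0$. Hence $\deg(P) = n$ and $P$ has precisely the form treated in \textup{(i)}, to which we appeal. Finally, for the irreducibility addendum: in both cases $P$ has finitely many zeros by the arguments above, so Theorem~\ref{thm:irreducible} applies; if $P$ had $n^2$ zeros and were reducible, say $P = P_1 \cdots P_k$ with $k \ge 2$ and $\deg(P_j) = n_j \ge 1$, then Theorem~\ref{thm:irreducible}\ref{it:factors} would force $P$ to have at most $\sum_{j=1}^k n_j^2 < n^2$ zeros, a contradiction, so $P$ is irreducible.

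I do not expect a genuine obstacle: this corollary is essentially a repackaging of Theorems~\ref{thm:finite_zeros}, \ref{thm:upper_bound}, and~\ref{thm:irreducible}. The only points needing care are that passing from $P$ to $\alpha_{n,0}^{-1}P$ preserves both the zero set and the bound $\deg(P_{n-1}) \le n-1$ (immediate), that the hypothesis $n > m$ in \textup{(ii)} is exactly what excludes a second monomial of top degree and thus makes \textup{(i)} applicable (it would fail for $n = m$), and that finiteness of the zero set — the one hypothesis of Theorem~\ref{thm:irreducible} that is not free — is genuinely supplied by Theorem~\ref{thm:finite_zeros}.
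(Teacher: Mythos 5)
Your proposal is correct and follows essentially the same route as the paper: finiteness of the zero set from Theorem~\ref{thm:finite_zeros} (after normalizing the leading coefficient, and noting that a harmonic polynomial with $\deg p = n > \deg q$ has the form in (i)), the bound $n^2$ from Theorem~\ref{thm:upper_bound}, and irreducibility from Theorem~\ref{thm:irreducible}. The only cosmetic difference is that you argue irreducibility by contradiction via part \ref{it:factors} of that theorem, while the paper cites it directly; your version of the argument also covers $n=1$ without needing the $n \ge 2$ hypothesis of part \ref{it:criterion_irreducible}.
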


\begin{proof}
By Theorem~\ref{thm:finite_zeros}, $P$ has finitely many zeros and, by 
Theorem~\ref{thm:upper_bound}, the number of zeros is at most $n^2$.
If $P$ has $n^2$ zeros, then $P$ is irreducible by 
Theorem~\ref{thm:irreducible}.
\end{proof}

\begin{remark}
\begin{enumerate}
\item The bounds in Theorem~\ref{thm:upper_bound} and 
Corollary~\ref{cor:wilmshurst} 
are sharp in the sense that for every $n$ there exists a harmonic, and in 
particular polyanalytic, polynomial of degree $n$ with $n^2$  zeros, e.g., the 
example of Wilmshurst 
\begin{equation}\label{eqn:wilmshurst_polynomial}
P(z,\conj z) = (z-1)^n + z^n + \conj{\ii (z-1)^n -\ii z^n};
\end{equation}
see~\cite[p.~2080]{Wilmshurst1998}.  Hence, mixed terms in $z$ and $\conj{z}$ 
are not necessary to obtain the maximum number of zeros.
While a polyanalytic polynomial of degree $n$ has $(n+1)(n+2)/2$ 
coefficients, only $2n$ coefficients in~\eqref{eqn:wilmshurst_polynomial} are 
nonzero.
In Section~\ref{sect:extremal}, we present another construction of a harmonic 
polynomial of degree $n$ with $n^2$ zeros, which, moreover, has only $n$ 
nonzero coefficients.

\item Another special class of polyanalytic polynomials of interest 
are \emph{logharmonic polynomials}
\begin{equation}
P(z,\conj z) = p(z)\conj{q(z)} - c,
\end{equation}
where $p$ and $q$ are analytic polynomials and $c \in \C$. 
Such a polynomial has at most $\deg(p)^2 + \deg(q)^2$ zeros provided that it is 
not self-conjugate, which was recently proven in 
\cite[Thm.~4.1]{KhavinsonLundbergPerry2024}.
\end{enumerate}
\end{remark}

Finally, we answer question (iv) from Section~\ref{sect:intro} on the 
possible numbers of zeros of polyanalytic polynomials.
The proof of the next result is based on previous work of the authors 
on harmonic functions~\cite{SeteZur2021}.

\begin{theorem}
Let $n \in \N$, $n \geq 1$.  For each $k \in \{0,1,2,\dots,n^2,\infty\}$ there 
exists a polyanalytic polynomial of degree $n$ with exactly $k$ zeros.
\end{theorem}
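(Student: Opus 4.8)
The plan is to construct, for each target count $k \in \{0,1,\dots,n^2,\infty\}$, an explicit polyanalytic polynomial of degree $n$ realizing exactly $k$ zeros, treating several ranges of $k$ separately. For the two extreme cases the examples are immediate: $P(z,\conj z) = (z\conj z+1)^{\lceil n/2\rceil}$ (suitably adjusted with a linear factor when $n$ is odd, e.g.\ $(z\conj z+1)^{(n-1)/2}(z+\conj z+\i)$) has degree $n$ and no zeros, giving $k=0$; and $P(z,\conj z)=z^{n-1}(z\conj z-1)$ vanishes on the unit circle, has degree $n$, and gives $k=\infty$. For the maximal finite value $k=n^2$ one invokes the Wilmshurst polynomial \eqref{eqn:wilmshurst_polynomial} already recorded in the remark, or the construction of Section~\ref{sect:extremal}.

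For the intermediate finite values $1 \le k \le n^2$ the natural strategy is to start from a harmonic polynomial of the form $P(z,\conj z) = p(z) + \conj{q(z)}$ with $\deg p = n$, $\deg q < n$, whose zeros can be controlled, and to deform it. One clean route uses the authors' earlier work~\cite{SeteZur2021} on harmonic functions: there it is shown how to produce harmonic polynomials with any prescribed number of zeros up to the Wilmshurst bound by combining a ``generic'' high-count example with lower-degree pieces. Concretely, I would take $P_n$ a degree-$n$ harmonic polynomial with exactly $n^2$ zeros (Wilmshurst's example), and for smaller counts use products: by Theorem~\ref{thm:irreducible}\,\ref{it:factors}, a product $P_{n_1}\cdots P_{n_r}$ of harmonic polynomials of the stated forms with $\sum n_j = n$ has $\sum N(P_{n_j})$ zeros when the zero sets are disjoint and all zeros are simple, so one can additively assemble intermediate totals; the finitely many overlaps can be removed by a generic rigid translation $z \mapsto z - c_j$ of each factor. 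The remaining gaps — values of $k$ not expressible as such a sum of squares — are filled by a one-parameter deformation argument: start from an example with $>k$ zeros and perturb a coefficient continuously, watching pairs of zeros collide and annihilate, so that every value between $0$ (or $1$) and the starting count is attained at some parameter; a degree/winding count via Theorem~\ref{thm:zero_continuous_function} controls that no zeros escape to infinity during the deformation since $\deg(P)>2\deg_{\conj z}(P)$ is preserved.

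The main obstacle I expect is the \emph{exactness} of the count through the deformation: it is easy to produce a family $P_t$ whose zero set varies, but ensuring that at the right parameter value the count is \emph{precisely} $k$ (not $k\pm1$, and with no accidental nonisolated zeros appearing) requires either an explicit transversality/simplicity argument or a careful choice of a family where zeros merge one pair at a time. I would handle this by choosing, for each $n$, a single sufficiently generic base family — for instance perturbations of $\lambda(z-1)^n + z^n + \conj{\i(z-1)^n - \i z^n}$ by lower-order terms, as in~\cite{SeteZur2021} — in which the zeros are simple for all but finitely many parameters and collide in transverse pairs, so that the number of zeros takes every integer value in $\{0,1,\dots,n^2\}$ (or $\{1,\dots,n^2\}$) as the parameter runs through an interval, and then separately supply the $k=0$ and $k=\infty$ examples above. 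The degree-$n$ condition and finiteness are automatic from Theorem~\ref{thm:finite_zeros} and Theorem~\ref{thm:upper_bound} applied to each member of the family, so the only real work is the bookkeeping of how the zero count changes along the path.
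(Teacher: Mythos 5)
Your treatment of $k=0$, $k=n^2$, and (modulo a slip) $k=\infty$ is fine, but the core of the statement is the intermediate range, and there your argument has a genuine gap. The paper disposes of $n\le k\le n^2$ by citing an existing result (\cite[Cor.~5.6]{SeteZur2021}: for every such $k$ there is a harmonic polynomial $p(z)+\conj{q(z)}$ with $\deg(p)=n>\deg(q)$ and exactly $k$ zeros), and handles $1\le k\le n$ with the explicit products $(z^k-1)(z-\conj{z}-1)^{n-k}$; you instead try to re-derive the intermediate counts by products plus a one-parameter deformation, and neither half closes. Products cannot reach the top of the range: if $P=P_1\cdots P_r$ with $\deg(P_j)=n_j\ge 1$ and $\sum n_j=n$, the count is at most $\sum n_j^2\le (n-1)^2+1=n^2-2n+2$, so every value $k$ with $n^2-2n+3\le k\le n^2-1$ requires an \emph{irreducible} polynomial (this is exactly Theorem~\ref{thm:irreducible}~\ref{it:criterion_irreducible}), and no choice of translations of factors helps. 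Your fallback, the deformation argument, is where the real problem lies: for a harmonic (or sense-generic polyanalytic) family whose zeros are all simple, each zero has index $\pm 1$ and the indices sum to the winding $n$ at infinity, so the number of zeros equals $n+2N_-$ and is \emph{always congruent to $n$ modulo $2$}; colliding zeros annihilate in $(+1,-1)$ pairs, so along a generic path the count jumps by $2$, not by $1$. Hence the claim that ``the number of zeros takes every integer value'' along such a family is false; to realize the opposite parity you must pass through parameters with degenerate (index-$0$ or singular) zeros, and controlling the \emph{exact} count at precisely those non-generic parameters is the hard content you have not supplied — it is what the cited construction in \cite{SeteZur2021} actually does.

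Two smaller points: your $k=\infty$ example $z^{n-1}(z\conj{z}-1)$ has degree $n+1$ (the term $z^{n}\conj{z}$), so replace it by, e.g., $z^n-\conj{z}^n$ as in the paper; and for $1\le k\le n$ you never actually exhibit a polynomial with exactly $k$ zeros — the paper's factorized examples $(z^k-1)(z-\conj{z}-1)^{n-k}$ show this range needs no deformation machinery at all.
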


\begin{proof}
For $k = 0$, the polynomial $P(z, \conj{z}) = (z - \conj{z} - 1)^n = (2 \ii 
\im(z) - 1)^n$ has degree $n$ and no zeros.
For $k \in \{ 1, 2, \dots, n\}$, the polynomial $P(z,\conj{z}) = (z^k - 1)(z - 
\conj{z} - 1)^{n-k}$ has degree $n$ and exactly $k$ zeros.
Next, let $n \le k \le n^2$. By~\cite[Cor.~5.6]{SeteZur2021}, there exists a 
harmonic polynomial $P(z,\conj z) = p(z) + \conj{q(z)}$ with $\deg(p) = n >
\deg(q)$ and exactly $k$ zeros.  In particular, this is a polyanalytic 
polynomial of degree $n$.
For $k = \infty$, the polynomial $P(z,\conj{z}) = z^n - \conj{z}^n$ vanishes on 
the real axis.
\end{proof}
 \section{Location of zeros}
\label{sect:location}

In this section, we address the question where the zeros of a polyanalytic 
polynomial $P$ are located.  More precisely, we determine bounds for the 
maximum modulus of the zeros of $P$.  If $P$ has at least one zero, we denote
\begin{equation} \label{eqn:opt_radius}
r_* = \sup \{ \abs{z} : z \in \C, P(z, \conj{z}) = 0 \},
\end{equation}
i.e., all zeros of $P$ lie in the disk $\{ z \in \C : \abs{z} \leq r_* \}$ if 
$r_* < \infty$. In particular, 
if the set of zeros is unbounded, then $r_* = \infty$. If $P$ has no zero, we 
set $r_* = 0$.

The next theorem gives three bounds on $r_*$ depending on the coefficients of 
$P$, and thus yields inclusion disks for the zeros of $P$.

\begin{theorem} \label{thm:location}
Let a polyanalytic polynomial
\begin{equation}
P(z, \conj{z}) = \sum_{k=0}^n \sum_{j=0}^k \alpha_{j, k-j} z^j \conj{z}^{k-j}
\end{equation}
of degree $n$ be given such that there exists $\ell \in \{ 0, 1, \ldots, n \}$ 
with
\begin{equation} \label{eqn:dominant_term}
\alpha_n = \abs{\alpha_{\ell, n-\ell}} - \sum_{j=0, j \neq \ell}^n 
\abs{\alpha_{j,n-j}} > 0.
\end{equation}
Define the auxiliary real polynomial
\begin{equation}
q(t) = t^n - \sum_{k=0}^{n-1} c_k t^k
\quad \text{with} \quad
c_k = \sum_{j=0}^{k} \frac{\abs{\alpha_{j, k-j}}}{\alpha_n},
\quad k = 0, 1, \ldots, n-1.
\end{equation}
If not all coefficients $c_k$ are zero, then $q$ has a unique positive zero 
$r_0 > 0$, otherwise let $r_0 = 0$.  Then the following holds.
\begin{enumerate}
\item \label{it:r0_location}
All zeros of $P$ are located in the disk $\{ z \in \C : \abs{z} \leq r_0 \}$, 
i.e., $r_* \leq r_0$.

\item \label{it:r0_is_smaller}
The radius $r_0$ is bounded in terms of the coefficients of $P$ by
\begin{equation} \label{eqn:lagrange}
r_0 \leq r_1 = \max \left\{ 1, \sum_{k=0}^{n-1} \sum_{j=0}^k 
\frac{\abs{\alpha_{j, k-j}}}{\alpha_n} \right\}
\end{equation}
and
\begin{equation} \label{eqn:cauchy}
r_0 < r_2 = 1 + \max \left\{ \sum_{j=0}^k \frac{\abs{\alpha_{j, 
k-j}}}{\alpha_n} : k = 0, 1, \ldots, n-1 \right\}.
\end{equation}
\end{enumerate}
\end{theorem}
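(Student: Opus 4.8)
The plan is to mimic the winding-number argument from the proof of Theorem~\ref{thm:sufficient_condition}, but now we want to show that the winding is nonzero on \emph{every} circle of radius $r > r_0$, so that no zero can lie outside $\{\abs{z} \le r_0\}$. First I would factor out the dominant monomial and write, for $z$ on the circle $\abs{z} = r$,
\begin{equation*}
P(z, \conj{z}) = z^\ell \conj{z}^{n-\ell} \Bigl( \alpha_{\ell, n-\ell} + g(z) \Bigr),
\end{equation*}
where $g$ collects all the other terms divided by $z^\ell \conj{z}^{n-\ell}$. On $\abs{z} = r$ we have $\abs{z^j \conj{z}^{k-j} / (z^\ell \conj{z}^{n-\ell})} = r^{k-n} = r^{-(n-k)}$, so
\begin{equation*}
\abs{g(z)} \le \sum_{j=0, j\neq\ell}^n \abs{\alpha_{j,n-j}} + \sum_{k=0}^{n-1} \sum_{j=0}^k \abs{\alpha_{j,k-j}} \, r^{-(n-k)}.
\end{equation*}
The key point is that $\abs{g(z)} < \abs{\alpha_{\ell,n-\ell}}$ on $\abs{z} = r$ precisely when $\sum_{k=0}^{n-1} c_k r^{k-n} < 1$, i.e.\ when $q(r) > 0$ (after multiplying by $r^n > 0$). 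I would then invoke the standard fact that $q(t) = t^n - \sum_{k=0}^{n-1} c_k t^k$ with $c_k \ge 0$, not all zero, has exactly one positive root $r_0$ (by Descartes' rule of signs, or simply because $t \mapsto \sum c_k t^{k-n}$ is strictly decreasing from $+\infty$ to $0$ on $(0,\infty)$), and that $q(t) > 0$ for all $t > r_0$. Hence for every $r > r_0$ the circle $\Gamma_r = \{\abs{z} = r\}$ avoids the zeros of $\alpha_{\ell,n-\ell} + g$, so $\wind(\alpha_{\ell,n-\ell} + g; \Gamma_r) = 0$ by Rouch\'e-type reasoning (the image stays in a half-plane not containing $0$), and therefore
\begin{equation*}
\wind(P; \Gamma_r) = \wind(z^\ell; \Gamma_r) + \wind(\conj{z}^{n-\ell}; \Gamma_r) + 0 = \ell - (n-\ell) = 2\ell - n.
\end{equation*}
There is a subtlety here that I expect to be the main obstacle: unlike in Theorem~\ref{thm:sufficient_condition}, we are \emph{not} assuming $2\ell \neq n$, so this winding may be zero and the direct contradiction with Theorem~\ref{thm:zero_continuous_function} is unavailable. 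I would resolve this by applying the theorem to the annulus argument differently — namely, fix $r > r_0$ and suppose $P$ had a zero $z_0$ with $\abs{z_0} > r_0$; choose $r$ with $r_0 < r < \abs{z_0}$ that is not the modulus of any zero (possible since, once we know $P$ has at least one zero, \ref{it:factorization} and Theorem~\ref{thm:isolated_finite} or the hypothesis just let us pick such $r$ in the open interval; even simpler, just take $r$ arbitrarily close to $r_0$). Then on the closed disk $D_r = \{\abs{z} \le r\}$ the function $P$ might vanish, but I instead compare with a deformation: since $\abs{g(z)} < \abs{\alpha_{\ell,n-\ell}}$ holds for \emph{all} $\abs{z} \ge r$, the homotopy $H(z,s) = z^\ell\conj{z}^{n-\ell}(\alpha_{\ell,n-\ell} + s\,g(z))$, $s \in [0,1]$, is zero-free on $\{\abs{z} \ge r\}$; shrinking $\abs{z_0}$-circles shows $P$ and $z^\ell\conj{z}^{n-\ell}\alpha_{\ell,n-\ell}$ have the same zeros outside $D_r$, and the latter has none. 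This makes the argument clean regardless of the sign of $2\ell - n$.

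For part~\ref{it:r0_is_smaller}, the bounds $r_1$ and $r_2$ are exactly the classical Lagrange and Cauchy bounds applied to the auxiliary polynomial $q$, whose coefficients are $-c_k = -\sum_{j=0}^k \abs{\alpha_{j,k-j}}/\alpha_n$. I would simply quote the Lagrange bound (every positive root of $t^n - \sum_{k=0}^{n-1} c_k t^k$ with $c_k\ge 0$ is at most $\max\{1, \sum_k c_k\}$) and the Cauchy bound ($\le 1 + \max_k c_k$), and observe that $\sum_{k=0}^{n-1} c_k = \sum_{k=0}^{n-1}\sum_{j=0}^k \abs{\alpha_{j,k-j}}/\alpha_n$ and $\max_k c_k = \max_k \sum_{j=0}^k \abs{\alpha_{j,k-j}}/\alpha_n$, which are precisely the right-hand sides of~\eqref{eqn:lagrange} and~\eqref{eqn:cauchy}. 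For completeness I might include a one-line proof of each: if $r > \max\{1,\sum c_k\}$ then $r^n > r^{n-1}\sum c_k \ge \sum c_k r^k$, so $q(r) > 0$; and if $r > 1 + \max_k c_k$ then $\sum_{k=0}^{n-1} c_k r^k \le (\max_k c_k)\,\frac{r^n - 1}{r - 1} < (\max_k c_k)\,\frac{r^n}{r-1} \le r^n$ using $r - 1 > \max_k c_k$. Since $q(r) > 0$ forces $r \ge r_0$ (as $r_0$ is the largest positive root and $q \to +\infty$), each bound gives $r_0 \le r_i$. The edge case where all $c_k = 0$, so $r_0 = 0$, is trivially consistent with both bounds since $r_i \ge 1 > 0$.
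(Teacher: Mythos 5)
Your proposal is correct, and at its core it is the paper's own argument in a different dress: the inequality $\abs{g(z)} < \abs{\alpha_{\ell, n-\ell}}$ on $\abs{z} = r$, which you show is equivalent to $q(r) > 0$, is precisely the paper's lower bound $\abs{P(z, \conj{z})} \geq \alpha_n q(\abs{z}) > 0$ for $\abs{z} > r_0$ (obtained there directly by the triangle inequality, without factoring out $z^\ell \conj{z}^{n-\ell}$), and your part \ref{it:r0_is_smaller} amounts to the same evaluations of $q$ at $r_1$ and $r_2$ as in the paper (your Cauchy estimate gives $r_0 \leq r_2$, the paper's even $r_0 < r_2$; the statement only requires $\leq$). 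The one substantive comment is that the winding-number and homotopy superstructure in your part \ref{it:r0_location} is dead weight: the task is to \emph{exclude} zeros outside the disk, not to produce zeros inside it, so once $\abs{g(z)} < \abs{\alpha_{\ell,n-\ell}}$ holds on $\abs{z} = r > r_0$ (and $z \neq 0$ there, since $r > 0$) you immediately have $P(z, \conj{z}) = z^\ell \conj{z}^{n-\ell} \bigl( \alpha_{\ell,n-\ell} + g(z) \bigr) \neq 0$, with no appeal to Theorem~\ref{thm:zero_continuous_function}; in particular the ``main obstacle'' you identify (that $2\ell = n$ is allowed, so the winding $2\ell - n$ may vanish) never arises, and your closing homotopy argument is just this one-line observation in disguise. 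The remaining ingredients --- Descartes' rule or your monotonicity argument for the unique positive root $r_0$, the degenerate case where all $c_k$ vanish, and the short Lagrange and Cauchy estimates for $q$ --- all check out.
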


\begin{proof}
\ref{it:r0_location}
If $c_k = 0$ for $k = 0, 1, \ldots, n-1$, then $P$ has the form $P(z, \conj{z}) 
= \sum_{j=0}^n \alpha_{j,n-j} z^j \conj{z}^{n-j}$.  In that case, 
$\abs{P(z, \conj{z})} \geq \alpha_n \abs{z}^n$ shows that $P$ has only the zero 
$z = 0$, hence $r_* = r_0 = 0$.

If not all $c_k$ are zero, then there is exactly one sign change in the 
sequence of coefficients of 
$q$.  Thus, $q$ has exactly one positive zero $r_0$ by Descartes' rule of 
signs, see~\cite[Thm.~6.2d]{Henrici1974} or~\cite[p.~317]{Sheil-Small2002}, 
which is simple.
In particular, $q(t) > 0$ for $t > r_0$ and $q(t) < 0$ for $0 < t < r_0$.
Then, for $\abs{z} > r_0$, we obtain
\begin{align*}
\abs{P(z, \conj{z})}
&\geq \abs{\alpha_{\ell, n-\ell}} \abs{z}^n - \sum_{j=0, j \neq \ell}^n 
\abs{\alpha_{j,n-j}} \abs{z}^n - \sum_{k=0}^{n-1} \sum_{j=0}^k 
\abs{\alpha_{j, k-j}} \abs{z}^k \\
&= \alpha_n \abs{z}^n - \sum_{k=0}^{n-1} \sum_{j=0}^k 
\abs{\alpha_{j, k-j}} \abs{z}^k = \alpha_n q(\abs{z}) > 0.
\end{align*}
This shows that all zeros of $P$ are located in $\{ z \in \C : \abs{z} \leq r_0 
\}$, i.e., $r_* \leq r_0$.

\ref{it:r0_is_smaller} First, note that $r_1 \geq 1$ and $r_2 \geq 1$.  
Moreover, we have
\begin{equation*}
q(r_1)
= r_1^n - \sum_{k=0}^{n-1} c_k r_1^k
\geq r_1^n - \sum_{k=0}^{n-1} c_k r_1^{n-1}
\geq r_1^n - r_1 r_1^{n-1} = 0,
\end{equation*}
which implies $r_0 \leq r_1$.  Similarly, with $c_k \leq r_2 - 1$ for all $k$, 
we have
\begin{equation*}
q(r_2) = r_2^n - \sum_{k=0}^{n-1} c_k r_2^k
\geq r_2^n - \sum_{k=0}^{n-1} (r_2 - 1) r_2^k
= r_2^n - (r_2^n - 1) = 1 > 0,
\end{equation*}
which implies $r_0 < r_2$.
\end{proof}

\begin{remark}
\begin{enumerate}
\item
The bound $r_0$ on the maximum modulus $r_*$ of the zeros of $P$ generalizes a 
classical result of Cauchy for analytic polynomials;
see~\cite[p.~122]{Cauchy1829}, \cite[Thm.~6.4l]{Henrici1974}, 
or~\cite[Thm.~27.1]{Marden1966}.

\item
For an analytic polynomial $p(z) = \sum_{k=0}^n a_k z^k$ of degree~$n$,
\eqref{eqn:lagrange} becomes Lagrange's bound
\begin{equation}
r_1 = \max \biggl\{ 1, \sum_{k=0}^{n-1} \frac{\abs{a_k}}{\abs{a_n}} \biggr\},
\end{equation}
and~\eqref{eqn:cauchy} becomes Cauchy's bound
\begin{equation}
r_2 = 1 + \max \biggl\{ \frac{\abs{a_k}}{\abs{a_n}} : k = 0, 1, \ldots, n-1 
\biggr\};
\end{equation}
see~\cite[Thm.~27.2]{Marden1966} or Cauchy's original 
work~\cite[p.~122]{Cauchy1829}.
Theorem~\ref{thm:location} generalizes these bounds to polyanalytic 
polynomials, and therefore we call~\eqref{eqn:lagrange} the \emph{Lagrange 
bound} and~\eqref{eqn:cauchy} the \emph{Cauchy bound} on the zeros of a 
polyanalytic polynomial.
Moreover, Theorem~\ref{thm:location} generalizes a bound for harmonic 
polynomials by Sheil-Small~\cite[Sect.~2.6.10]{Sheil-Small2002}.

\item
For a polyanalytic polynomial of degree $n$ that does not 
satisfy~\eqref{eqn:dominant_term}, the 
set of zeros may be unbounded.  In this case, there is no finite bound on the 
modulus of the zeros, e.g., $P(z, \conj{z}) = z^n + \conj{z}^n$, $n \geq 1$, 
has an unbounded set of nonisolated zeros.
\end{enumerate}
\end{remark}

Theorem~\ref{thm:location} does not require that the number of zeros of $P$ is 
finite, e.g., $P(z, \conj{z}) = z \conj{z} - 1$ vanishes on the unit circle, 
satisfies $r_* = 1$, and the bounds are $r_0 = r_1 = 1$ and $r_2 = 2$.

The bound $r_0$ and Lagrange's bound~$r_1$ are sharp in the sense that for 
all $n$ there exists a polyanalytic polynomial $P$ of degree~$n$ with $r_* = 
r_0 = r_1$; see Example~\ref{ex:lagrange_bound}.
Cauchy's bound~\eqref{eqn:cauchy} is not sharp since $r_* \leq r_0 < r_2$, 
but asymptotically sharp in the sense that the relative error $(r_2 - r_*)/r_*$ 
can be arbitrarily small for any $n$; see Example~\ref{ex:cauchy_bound}.

\begin{example} \label{ex:lagrange_bound}
We consider
\begin{equation}
P_1(z, \conj{z}) = a z^n + b \conj{z}
\end{equation}
with $n \geq 2$ and $a, b \in \C \setminus \{ 0 \}$.
First, we determine the zeros of $P_1$.  Clearly, $z = 0$ is a zero.  For the 
other zeros, write $z = \abs{z} \ee^{\ii \varphi}$, $a = \abs{a} \ee^{\ii 
\alpha}$, and $b = \abs{b} \ee^{\ii \beta}$ with $\varphi, \alpha, \beta \in 
\R$.
Then $P_1(z, \conj{z}) = 0$ is equivalent to $\abs{z}^{n-1} \ee^{\ii (n+1) 
\varphi} = \frac{\abs{b}}{\abs{a}} \ee^{\ii (\beta - \alpha + \pi)}$.  Hence, 
the zeros of $P_1$ are
\begin{equation*}
z_0 = 0, \quad
z_j = \biggl( \frac{\abs{b}}{\abs{a}} \biggr)^{\frac{1}{n-1}} \ee^{\ii 
\varphi_j}, 
\quad \varphi_j = \frac{\beta - \alpha + (2 j + 1) \pi}{n+1}, \quad j = 1, 
\ldots, n+1.
\end{equation*}
Thus, the largest magnitude of a zero of $P_1$ is $r_* = 
(\abs{b}/\abs{a})^{1/(n-1)}$.  The Lagrange and Cauchy bounds are $r_1 = 
\max \{ 1, \abs{b}/\abs{a} \}$ and $r_2 = 1 + \frac{\abs{b}}{\abs{a}}$, 
respectively.
The auxiliary polynomial $q(t) = t^n - \frac{\abs{b}}{\abs{a}} t$ has the 
positive zero $r_0 = (\abs{b}/\abs{a})^{1/(n-1)} = r_*$.
If $\abs{a} = \abs{b}$, then $r_* = r_0 = r_1 < r_2$, which shows that the 
bounds $r_0$ and $r_1$ are sharp.
\end{example}

\begin{example} \label{ex:cauchy_bound}
We consider
\begin{equation}
P_2(z, \conj{z}) = a z^n + b \conj{z}^{n-1}
\end{equation}
with $n \geq 2$ and $a, b \in \C \setminus \{ 0 \}$.
Write $a = \abs{a} \ee^{\ii \alpha}$, $b = \abs{b} \ee^{\ii \beta}$ with 
$\alpha, \beta \in \R$.
The zeros of $P_2$, computed as for $P_1$, are
\begin{equation*}
z_0 = 0, \quad
z_j = \frac{\abs{b}}{\abs{a}} \ee^{\ii \varphi_j}, \quad
\varphi_j = \frac{\beta - \alpha + (2 j + 1) \pi}{2n-1}, \quad j = 1, 
\ldots, 2n-1.
\end{equation*}
Thus, the optimal radius is $r_* = \abs{b}/\abs{a}$.  The auxiliary polynomial 
$q(t) = t^n - \frac{\abs{b}}{\abs{a}} t^{n-1}$ has the unique positive zero 
$r_0 = \abs{b}/\abs{a} = r_*$.
Moreover, $r_1 = \max \{ 1, \abs{b}/\abs{a} \}$ and $r_2 = 1 + 
\abs{b}/\abs{a}$. 
The relative error $(r_2 - r_*)/r_* = \abs{a}/\abs{b}$ can be arbitrarily small.
\end{example}

\begin{figure}
{\centering
\includegraphics[width=0.48\linewidth]{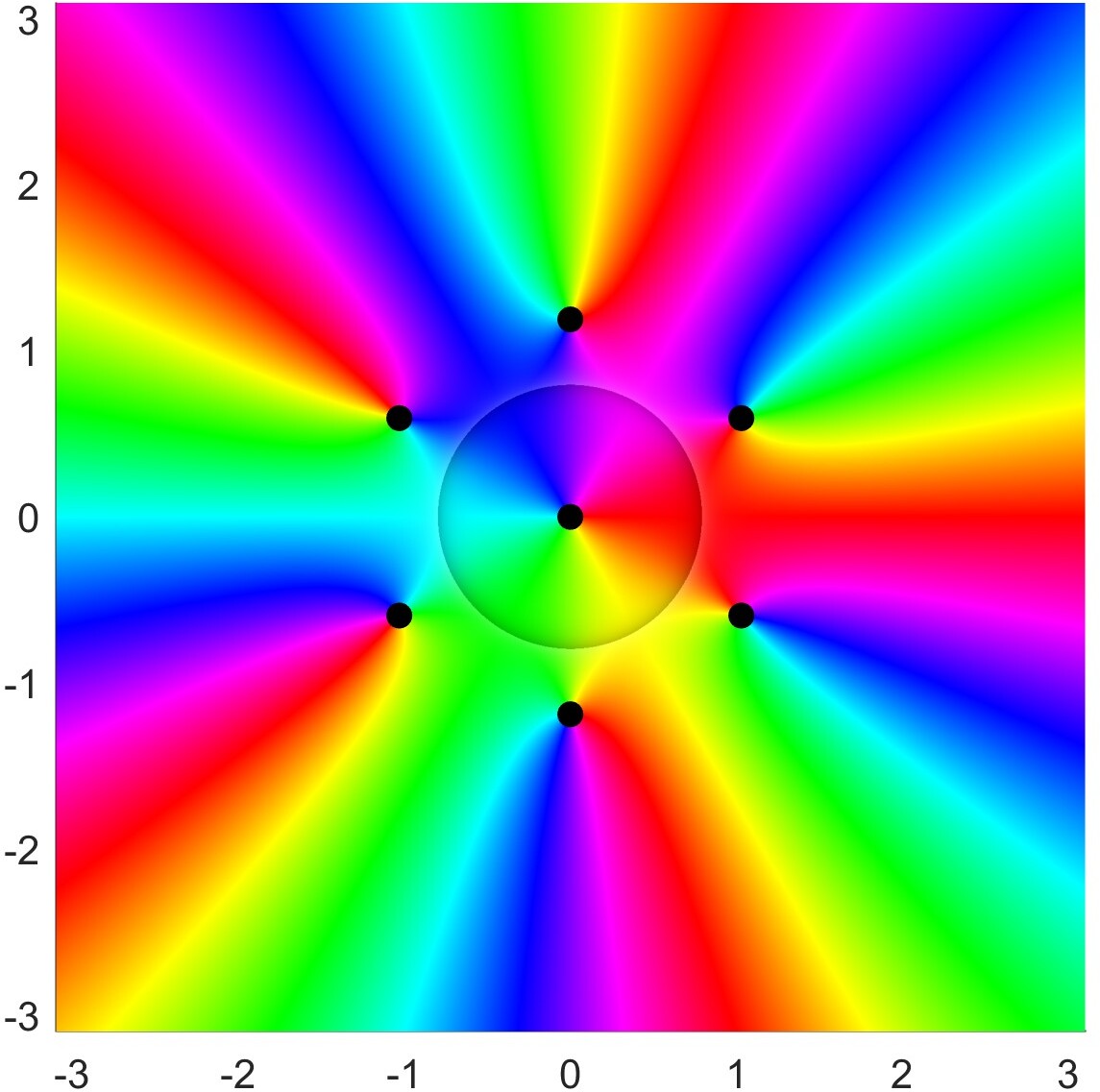}
\includegraphics[width=0.48\linewidth]{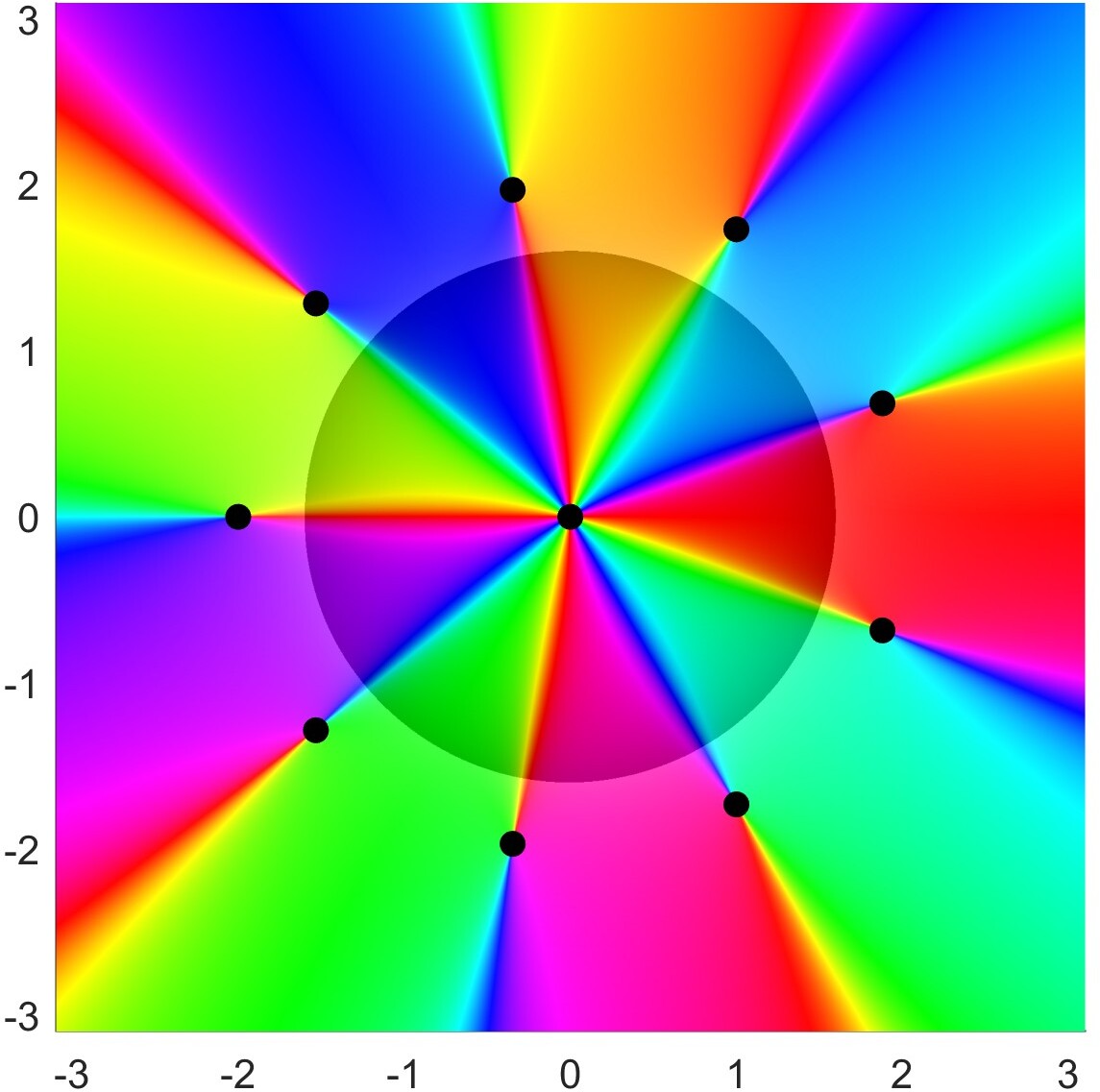}

}
\caption{Phase plots of $P_1$ (left) and $P_2$ (right) in 
Examples~\ref{ex:lagrange_bound} and~\ref{ex:cauchy_bound} with $a = 1$, $b = 
2$, and $n = 5$.}
\label{fig:location_of_zeros}
\end{figure}

We visualize the polyanalytic polynomials $P_1$ and $P_2$ with a \emph{phase 
plot}, in which the domain of a complex function $f$ is colored according to 
its \emph{phase} $f(z)/\abs{f(z)}$; see~\cite{Wegert2012,WegertSemmler2011}.  
Here, we use a phase plot with a custom shading indicating the orientation of 
$f$.
Recall that $f$ is sense-preserving (orientation-preserving) at $z$ if $J_f(z) 
> 0$, and sense-reversing (orientation-reversing) at $z$ if $J_f(z) < 0$,
where $J_f = \abs{\partial_z f}^2 - \abs{\partial_{\conj{z}} f}^2$ is the
Jacobian of $f$; see, e.g.,~\cite[p.~45]{Balk1991} or~\cite[p.~5]{Duren2004}.
In the phase plots, $f$ is sense-preserving in the brighter regions and 
sense-reversing in the darker regions.
Moreover, black dots mark the zeros of the functions.
Figure~\ref{fig:location_of_zeros} displays phase plots of $P_1$ (left) and 
$P_2$ (right).
The polyanalytic polynomial $P_1$ is sense-reversing at its zero $z=0$ and 
sense-preserving 
at the other zeros.  Note that the colors appear in the same order around all 
zeros at which $P_1$ is sense-preserving and that this order is reversed at 
the zero $z = 0$ where $P_1$ is sense-reversing.
The phase of $P_2$ behaves similarly, except at the origin where $P_2$ is 
singular (i.e., $J_{P_2}(0) = 0$) if $n \geq 3$.
We observe that all colors appear $n-1 = 4$ times around $z=0$,
indicating that the index of $P_2$ at the origin is $-(n-1)$;
see~\cite{Balk1991} for more on the indices of continuous functions 
and~\cite{SeteZur2021, SuffridgeThompson2000} for the indices of harmonic 
functions.

\begin{figure}
{\centering
\includegraphics[width=0.48\linewidth]{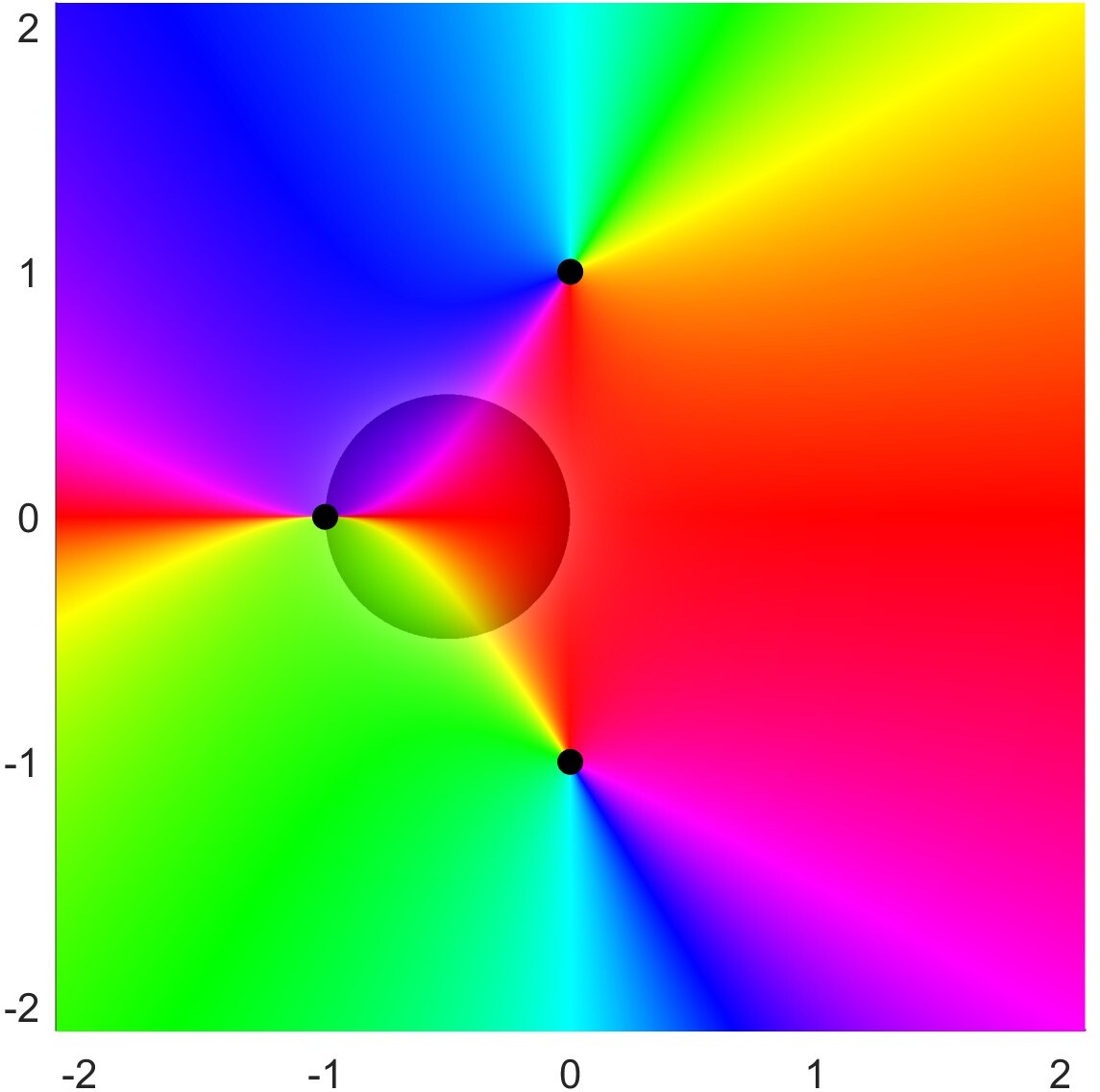}
\includegraphics[width=0.48\linewidth]{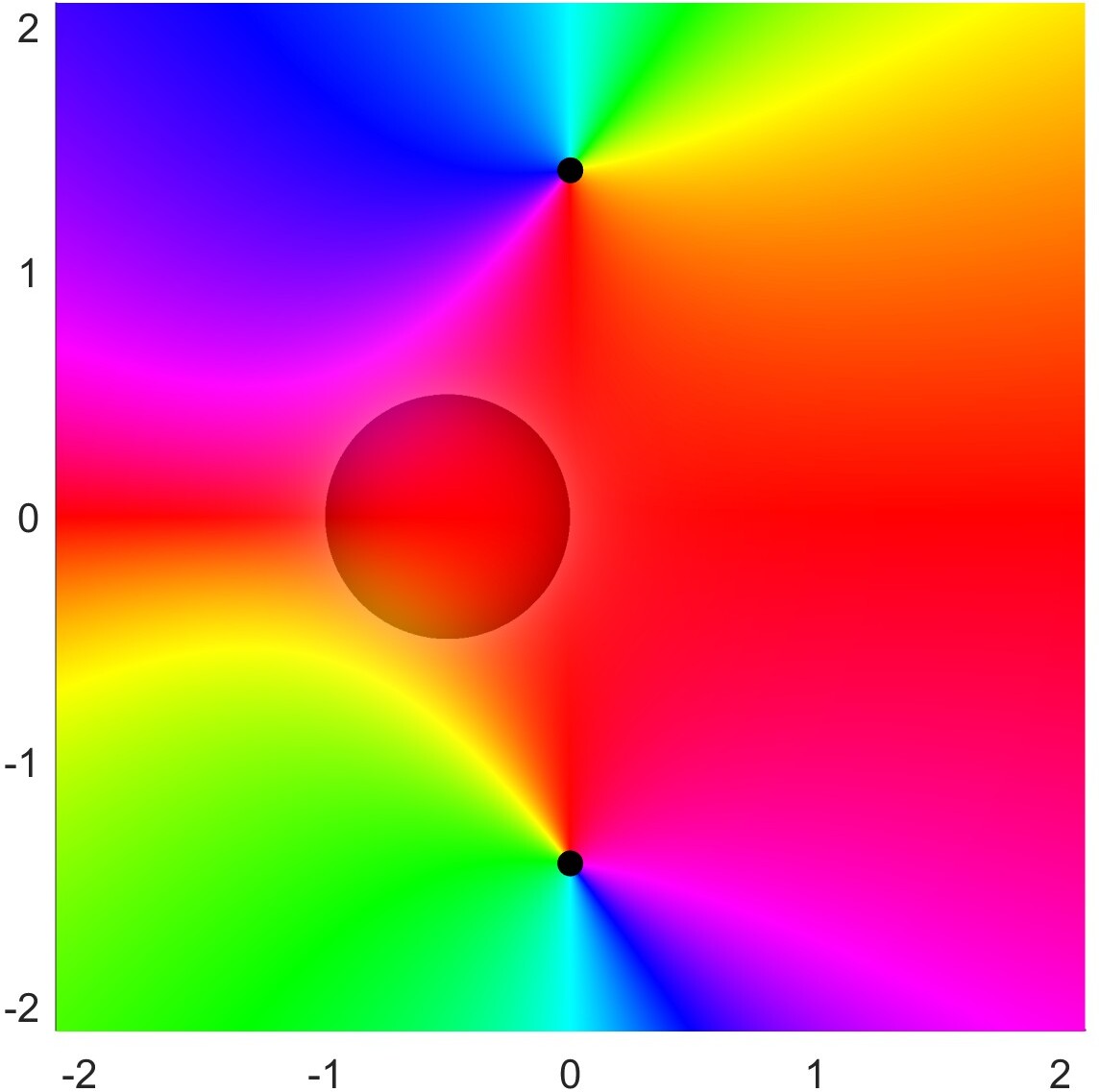}

}
\caption{Phase plots of $P_3$ (left) and $P_4$ (right) from 
Examples~\ref{ex:P3} and~\ref{ex:P4}.}
\label{fig:P34}
\end{figure}

In the previous examples, $r_0$ is an optimal bound on the magnitude of the 
zeros, but this is not always the case.

\begin{example} \label{ex:P3}
The polyanalytic polynomial
\begin{equation}
P_3(z, \conj{z}) = z^2 + z + \conj{z} + 1
\end{equation}
has the three zeros $-1$, $\ii$, and $- \ii$ (with indices $0$, $1$, and $1$, 
respectively), hence $r_* = 1$.
Here, $r_1 = \max \{ 1, 3 \} = 3$, $r_2 = 1 + \max \{ 1, 2 \} = 3$.
The auxiliary polynomial $q(t) = t^2 - 2 t - 1$ has the positive zero $r_0 = 1 
+ \sqrt{2}$.  Thus, we have $r_* < r_0 < r_1 = r_2$ in this example.
\end{example}

In all examples above, Lagrange's bound is smaller than Cauchy's bound, 
i.e., $r_1 \leq r_2$, which does not hold in general.

\begin{example} \label{ex:P4}
The Lagrange and Cauchy bounds for the polyanalytic polynomial
\begin{equation}
P_4(z, \conj{z}) = z^2 + z + \conj{z} + 2
\end{equation}
are $r_1 = 4 > 3 = r_2$.
The zeros of $P_4$ are $\pm \sqrt{2} \ii$ (each with index $1$), hence the 
optimal radius is $r_* = \sqrt{2}$.
\end{example}

\begin{table}
\begin{center}
\bgroup
\renewcommand{\arraystretch}{1.5}
\begin{tabular}{lcccc}
\toprule
Function & $r_*$ & $r_0$ & $r_1$ & $r_2$ \\
\midrule
$P_1(z, \conj{z}) = a z^n + b \conj{z}$ & 
$\Bigl( \frac{\abs{b}}{\abs{a}} \Bigr)^{\frac{1}{n-1}}$ & 
$\Bigl( \frac{\abs{b}}{\abs{a}} \Bigr)^{\frac{1}{n-1}}$ &
$\max \Bigl\{ 1, \frac{\abs{b}}{\abs{a}} \Bigr\}$ &
$1 + \frac{\abs{b}}{\abs{a}}$ \\

$P_2(z, \conj{z}) = a z^n + b \conj{z}^{n-1}$ & $\frac{\abs{b}}{\abs{a}}$ & 
$\frac{\abs{b}}{\abs{a}}$ & 
$\max \Bigl\{ 1, \frac{\abs{b}}{\abs{a}} \Bigr\}$ & $1 + 
\frac{\abs{b}}{\abs{a}}$ \\

$P_3(z, \conj{z}) = z^2 + z + \conj{z} + 1$ & $1$ & $1 + \sqrt{2}$ & $3$ & $3$ 
\\

$P_4(z, \conj{z}) = z^2 + z + \conj{z} + 2$ & $\sqrt{2}$ & $1 + \sqrt{3}$ & $4$ 
& $3$ \\
\bottomrule
\end{tabular}
\egroup
\end{center}
\caption{Maximum magnitude $r_*$ of the zeros and its bounds $r_0, r_1, r_2$ 
for $P_1, \ldots, P_4$ in Examples~\ref{ex:lagrange_bound}--\ref{ex:P4}.}
\label{tab:compare_radii}
\end{table}

Figure~\ref{fig:P34} shows phase plots of $P_3$ and $P_4$ with their zeros.
Table~\ref{tab:compare_radii} compares the optimal radius $r_*$ and the radii 
$r_0, r_1, r_2$ for $P_1, \ldots, P_4$.
 \section{Constructing extremal polyanalytic polynomials}\label{sect:extremal}

This section is devoted to construct an \emph{extremal} polyanalytic polynomial
of degree $n$, i.e., with the maximum number of $n^2$ zeros; see 
Corollary~\ref{cor:wilmshurst}.
Moreover, we approximately locate its zeros.
Previously, Bshouty, Hengartner, and Suez~\cite{BshoutyHengartnerSuez1995} and
Wilmshurst~\cite[p.~2080]{Wilmshurst1998} gave examples of extremal harmonic 
and therefore polyanalytic polynomials.
Our construction is similar to the one in~\cite{BshoutyHengartnerSuez1995}, but 
significantly simplified.

We iteratively define the harmonic polynomials $P_1(z, \conj{z}) = a_1 
\conj{z}$ and
\begin{equation}
P_n(z, \conj{z}) = 
\begin{cases} 
a_n z^n + P_{n-1}(z, \conj{z}), & \text{if } n \text{ is even},\\
a_n \conj{z}^n + P_{n-1}(z, \conj{z}), & \text{if } n \text{ is odd},
\end{cases}
\quad n \geq 2,
\end{equation}
with appropriate coefficients $a_1, \ldots, a_n$.
Our proof that $P_n$ has $n^2$ zeros relies on the argument principle for 
continuous functions, and on results about the indices of harmonic functions at 
their zeros.
Given $P_{n-1}$ with $(n-1)^2$ zeros, we show that $P_n$ has one zero close to 
each of the zeros of $P_{n-1}$, and that $P_n$ has $2n-1$ additional zeros in 
a certain annulus surrounding the zeros of $P_{n-1}$, provided that 
$\abs{a_n}$ is sufficiently small.
The construction is illustrated in Figure~\ref{fig:extremal_construction}.

\begin{theorem} \label{thm:extremal_number}
For all $n \in \N$, $n \geq 1$, there exist $a_1, \dots, a_n \in \C \setminus 
\{ 0 \}$, such that the harmonic polynomial
\begin{equation}\label{eqn:extremal_polynomial}
P_n(z, \conj{z}) = \sum_{k = 1}^{\floor{\frac{n}{2}}} a_{2k}z^{2k} + \sum_{k 
= 1}^{\ceil{\frac{n}{2}}} a_{2k-1}\conj z^{2k-1}
\end{equation}
has exactly $n^2$ distinct zeros.
\end{theorem}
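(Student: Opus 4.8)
The plan is to prove Theorem~\ref{thm:extremal_number} by induction on $n$, constructing the coefficients $a_n$ one at a time and controlling the zeros via the argument principle for continuous functions (Theorem~\ref{thm:zero_continuous_function}) together with the index theory for harmonic functions referenced in the text. The base case $n=1$ is immediate: $P_1(z,\conj z) = a_1 \conj z$ has the single zero $z=0$, and $1^2 = 1$. For the inductive step, suppose $P_{n-1}$ has exactly $(n-1)^2$ distinct zeros, all contained in some disk of radius $R$ about the origin; I want to choose $a_n$ with $\abs{a_n}$ small enough that $P_n = P_{n-1} + a_n z^n$ (or $+a_n\conj z^n$) has exactly $n^2$ distinct zeros, namely $(n-1)^2$ of them near the old zeros and $2n-1$ new ones in an annulus far from the origin.

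The first key step is the \emph{perturbation near old zeros}. Each zero $z_0$ of $P_{n-1}$ should be nondegenerate in the harmonic sense, i.e.\ the Jacobian $J_{P_{n-1}}(z_0) \neq 0$; this must be part of the induction hypothesis, strengthened to: $P_{n-1}$ has exactly $(n-1)^2$ zeros, all simple (nonsingular). At such a zero the map is a local homeomorphism with index $\pm 1$, so for $\abs{a_n}$ small the function $P_n$, being a small uniform perturbation on a tiny circle around each $z_0$, has the same winding $\pm 1$ on that circle and hence exactly one zero inside (using that the sum of local indices equals the winding, and a nonsingular zero contributes $\pm 1$). One also checks $J_{P_n}$ stays nonzero there, so these perturbed zeros are again simple. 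The second key step is the \emph{creation of new zeros in an annulus} $\{ \rho_1 \le \abs{z} \le \rho_2 \}$ lying outside the disk of radius $R$: on the outer circle $\abs{z}=\rho_2$, $P_n$ is dominated by its degree-$n$ term $a_n z^{\pm n}$, so $\wind(P_n;\abs{z}=\rho_2) = \pm n$; on the inner circle $\abs{z}=\rho_1$ (still outside all old zeros but where $P_{n-1}$ dominates $a_n z^{\pm n}$, which needs $\rho_1$ not too large relative to $1/\abs{a_n}$), $\wind(P_n;\abs{z}=\rho_1) = \wind(P_{n-1};\abs{z}=\rho_1)$, which by the full-disk winding equals the sum of indices of $P_{n-1}$'s zeros. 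I need to arrange that this inner winding is $\mp(n-1)$ — a sign pattern forcing the alternation between $z^n$ and $\conj z^n$ in the construction — so that the net change of winding across the annulus is $\pm n - (\mp(n-1)) = \pm(2n-1)$, producing at least $2n-1$ zeros there by a counting/index argument; combined with a matching upper bound from Theorem~\ref{thm:upper_bound} ($n^2$ total), this forces \emph{exactly} $2n-1$ new zeros and exactly $(n-1)^2$ perturbed ones, all simple.

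The main obstacle, I expect, is making the annulus argument quantitative and simultaneously compatible with the smallness of $\abs{a_n}$ needed near the old zeros: the inner radius $\rho_1$ must be large enough that $P_{n-1}$'s degree-$(n-1)$ term dominates its lower-order terms (so the inner winding is the "correct" $\pm(n-1)$), yet the crossover radius where $\abs{a_n z^n}$ overtakes $\abs{P_{n-1}(z)} \approx c\abs{z}^{n-1}$ is roughly $\abs{z} \sim c/\abs{a_n}$, which grows as $\abs{a_n}\to 0$ — so one first fixes $\rho_1$ from $P_{n-1}$ alone, then chooses $\abs{a_n}$ small enough that both $\abs{a_n}\rho_1^n < $ (lower bound for $\abs{P_{n-1}}$ on $\abs{z}=\rho_1$) and the near-old-zero perturbation is controlled, and finally picks $\rho_2 > \rho_1$ large. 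A second delicate point is ensuring the new $2n-1$ zeros are \emph{distinct} and simple and that there are not "too many" (the upper bound $n^2$ from B\'ezout via Theorem~\ref{thm:upper_bound} handles the global count, but one still wants each new zero to be nonsingular so the induction hypothesis is preserved); here one can either invoke genericity of $a_n$ within the admissible small disk, or argue directly that on the annulus $P_n$ is a small perturbation of $a_n z^{\pm n}$ whose zeros are simple and can be tracked by Rouch\'e-type winding counts on subannuli separating consecutive zeros. I would organize the write-up around two lemmas — one for the perturbed old zeros, one for the annular new zeros — and then assemble them with the $n^2$ upper bound to pin down the exact count.
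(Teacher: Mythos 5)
Your overall architecture is the same as the paper's: induction on $n$, a Rouch\'e/winding argument to keep one zero near each zero of $P_{n-1}$, a winding count of $n-(-(n-1))=2n-1$ over an annulus (made possible by alternating $z^n$ and $\conj{z}^n$), and the $n^2$ upper bound (Corollary~\ref{cor:wilmshurst}, resp.\ Theorem~\ref{thm:upper_bound}) to pin down the exact count. The genuine gap is in the annulus step. From $\wind(P_n;\partial A)=2n-1$ you assert ``at least $2n-1$ zeros by a counting/index argument,'' but the argument principle only says that the \emph{sum of the indices} of the zeros of $P_n$ in $A$ equals $2n-1$; for a harmonic function a degenerate sense-preserving zero can carry index larger than $1$, so without an a priori bound $\abs{\ind(P_n;z)}\le 1$ at every zero a handful of zeros could account for the whole winding, and then your final squeeze ``at least $(n-1)^2+2n-1$ and at most $n^2$, hence exactly $n^2$'' never gets started. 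The paper supplies precisely this missing ingredient: since $\partial_z P_n$ depends on $a_n$ while $\partial_{\conj{z}}P_n$ does not, one may choose $a_n$ so that $\partial_z P_n$ and $\partial_{\conj{z}}P_n$ have no common zero, whence (Sheil-Small) the index of $P_n$ at any zero lies in $\{-1,0,1\}$; this converts the winding $2n-1$ into at least $2n-1$ distinct zeros of index $+1$ in $A$ and also validates the per-disk counts near the old zeros.

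Your two proposed repairs do not close this as stated. ``Genericity of $a_n$'' is an assertion, not an argument, and would itself need proof. And on the annulus $P_n$ is \emph{not} a small perturbation of $a_n z^{\pm n}$: the new zeros sit exactly in the crossover region $\abs{z}\approx\abs{a_{n-1}}/\abs{a_n}$ where $\abs{a_n z^n}\approx\abs{a_{n-1}\conj{z}^{n-1}}$, so the correct local model is $a_n z^n+a_{n-1}\conj{z}^{n-1}$, whose $2n-1$ nonzero simple zeros are those of Example~\ref{ex:cauchy_bound}; a Rouch\'e argument on small circles about these points (for $\abs{a_n}$ small) can be made to work and would even yield simplicity, but that is a different and more technical argument than the sector/subannulus tracking you sketch. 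Relatedly, your strengthened induction hypothesis (all zeros simple, $J_{P_{n-1}}\neq 0$) is more than the paper propagates — it only needs ``index $\pm1$'' — and it obliges you to prove simplicity of the $2n-1$ annulus zeros, which is exactly the part left open; either weaken the hypothesis to the index statement and adopt the paper's choice of $a_n$, or carry out the local-model Rouch\'e/Jacobian estimates in full.
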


\begin{proof}
The harmonic polynomial $P_n$ in~\eqref{eqn:extremal_polynomial} has at most 
$n^2$ zeros by Corollary~\ref{cor:wilmshurst}.
We show inductively that $P_n$ has $n^2$ zeros $z_1, \ldots, z_{n^2}$ and that 
the index of $P_n$ at each zero is either $1$ or $-1$, i.e., $\ind(P_n; z_j) 
= \pm 1$.

For $n = 1$ and any $a_1 \neq 0$, the harmonic polynomial $P_1(z, \conj{z}) = 
a_1 \conj{z}$ has the unique zero $z_1 = 0$.
The Jacobian of $P_1$ is $J_{P_1}(z) = - a_1^2 < 0$, hence $P_1$ is 
sense-reversing at $z_1$ and $\ind(P_1; z_1) = -1$; 
see~\cite[Prop.~2.7]{SeteZur2021}, \cite[p.~413]{DurenHengartnerLaugesen1996}, 
or~\cite[p.~66]{SuffridgeThompson2000}.

For $n = 2$ and any $a_1, a_2 \neq 0$, the polynomial $P_2(z, \conj{z}) = 
a_2 z^2 + a_1 \conj{z}$ has the four zeros $z_1 = 0$ and
$z_j = \frac{\abs{a_1}}{\abs{a_2}} \ee^{\ii \varphi_j}$ with $\varphi_j = 
(\arg(a_1/a_2) +  (2j+1)\pi)/3$, $j = 2, 3, 4$; see 
Example~\ref{ex:cauchy_bound}.
The Jacobian of $P_2$ is $J_{P_2}(z) = \abs{2 a_2 z}^2 - \abs{a_1}^2$,
hence $J_{P_2}(z_1) < 0$ and $J_{P_2}(z_j) > 0$ for $j = 2, 3, 4$.
Therefore, $\ind(P_2; z_1) = -1$ and $\ind(P_2; z_j) = +1$ for $j = 2, 3, 4$.

For the induction step, assume for $n-1 \geq 2$ that $P_{n-1}$ 
has $(n-1)^2$ zeros $z_1, \ldots, z_{(n-1)^2}$ with $\ind(P_{n-1}; 
z_j) = \pm 1$ for $j = 1, \ldots, (n-1)^2$.
If $n$ is even, then $P_{n-1}$ and $P_n$ have the form $P_{n-1}(z) = a_{n-1} 
\conj{z}^{n-1} + P_{n-2}(z)$ and
\begin{equation*}
P_n(z, \conj{z}) = a_n z^n + P_{n-1}(z, \conj{z}).
\end{equation*}

Since $\partial_z P_n(z, \conj{z}) = n a_n z^{n-1} + \partial_z P_{n-1}(z, 
\conj{z})$ depends on $a_n$ while $\partial_{\conj{z}} P_n$ does not, 
it is possible to choose $a_n$ such that 
$\partial_z P_n$ and $\partial_{\conj{z}} P_n$ have no common zero.
Then the index of $P_n$ at a zero $z$ satisfies $\ind(P_n; z) \in \{ -1, 0, 1 
\}$; see~\cite[p.~67]{Sheil-Small2002}.

Next, we show that $P_n$ has $n^2$ zeros and that the index of $P_n$ at the 
zeros is $\pm 1$ for sufficiently small $\abs{a_n} \neq 0$ in two steps:
(i) $P_n$ has a zero close to each of the $(n-1)^2$ zeros of $P_{n-1}$, and
(ii) $P_n$ has $2n-1$ additional zeros.
In the following, let $D_\delta(a) = \{ z \in \C : \abs{z - a} < \delta \}$ 
denote the open disk with radius $\delta > 0$ and center $a \in \C$.

(i)
Using Rouch\'e's theorem (see~\cite[p.~37]{Balk1991}
or~\cite[Thm.~2.3]{SeteLuceLiesen2015}), we prove that $P_n(z, \conj{z}) = a_n 
z^n + 
P_{n-1}(z, \conj{z})$ has a zero close to each zero of $P_{n-1}$
for sufficiently small $\abs{a_n} \neq 0$.
Let $\delta > 0$ be such that $D_\delta(z_j) \cap D_\delta(z_k) = \emptyset$ 
for $j, k = 1, \ldots, (n-1)^2$, $j \neq k$.
Note that $0 \notin \partial D_\delta(z_j)$ for $j = 1, \ldots, (n-1)^2$, 
since $0$ is one of the zeros of $P_{n-1}$.
Define
\begin{equation*}
m = \min_{j = 1, \ldots, (n-1)^2} \min_{z \in \partial D_\delta(z_j)} 
\frac{\abs{P_{n-1}(z, \conj{z})}}{\abs{z^n}} > 0.
\end{equation*}
For $0 < \abs{a_n} < m$ and for each $j = 1, \ldots, (n-1)^2$, we have
\begin{equation*}
\abs{P_n(z, \conj{z}) - P_{n-1}(z, \conj{z})}
= \abs{a_n z^n} < \abs{P_{n-1}(z, \conj{z})}
\quad \text{for } z \in \partial D_\delta(z_j).
\end{equation*}
Hence, $P_n$ and $P_{n-1}$ have the same winding on $\partial D_\delta(z_j)$ by 
Rouch\'e's theorem.
By the argument principle for harmonic functions 
(see, e.g.,~\cite[Thm.~2.5]{SeteZur2021}),
\begin{equation*}
\wind(P_{n-1}; \partial D_\delta(z_j)) = \ind(P_{n-1}; z_j) = \pm 1.
\end{equation*}
Next, we use the argument principle for $P_n$.  Since $\abs{\ind(P_n; z)} 
\leq 1$ holds at a zero of $P_n$, there exists at least one zero of $P_n$ with 
index 
$\ind(P_{n-1}; z_j) = \pm 1$ in each disk $D_\delta(z_j)$.

(ii)
We show that $P_n$ has $2n-1$ additional zeros.
Let $r_{n-1} > 0$ such that all zeros of $P_{n-1}$ are in the interior of the 
circle $ C_{n-1} = \{ z \in \C : \abs{z} = r_{n-1} \}$.
Then the winding of $P_{n-1}$ along $C_{n-1}$, oriented in the positive sense, 
is $\wind(P_{n-1}; C_{n-1}) = -(n-1)$.
For $0 < \abs{a_n} < r_{n-1}^{-n} \min_{z \in C_{n-1}} \abs{P_{n-1}(z, 
\conj{z})}$, we have
\begin{equation*}
\abs{P_n(z, \conj{z}) - P_{n-1}(z, \conj{z})} = \abs{a_n z^n} < 
\abs{P_{n-1}(z, \conj{z})}
\quad \text{for } z \in C_{n-1}.
\end{equation*}
Hence, by Rouch\'e's theorem,
\begin{equation*}
\wind(P_n; C_{n-1}) = \wind(P_{n-1}; C_{n-1}) = -(n-1).
\end{equation*}
Let $r_n > r_{n-1}$ such that the zeros of $P_n$ satisfy $\abs{z} < r_n$ and 
let $C_n = \{ z \in \C : \abs{z} = r_n \}$.
Then $\wind(P_n; C_n) = n$ and the winding of $P_n$ along the boundary of 
the annulus $A = \{ z \in \C : r_{n-1} < \abs{z} < r_n \}$ is
\begin{equation*}
\wind(P_n; \partial A) = n - (-(n-1)) = 2n-1,
\end{equation*}
where $\partial A$ is oriented such that $A$ lies to the left.
By the argument principle, and since $\abs{\ind(P_n; z)} \leq 1$ at 
zeros of $P_n$, there are at least $2n-1$ zeros of $P_n$ in $A$ with $\ind(P_n; 
z) = +1$.

Combining (i) and (ii), $P_n$ has at least $(n-1)^2 + 2n - 1 = n^2$ zeros and, 
together with Corollary~\ref{cor:wilmshurst}, $P_n$ has exactly $n^2$ zeros.
More precisely, $P_n$ has exactly one zero in each disk $D_\delta(z_j)$
(each with index $\ind(P_{n-1}; z_j) = \pm 1$), and exactly $2n-1$ zeros in 
the annulus $A$ (each with index $+1$).

Finally, if $n$ is odd, then $P_{n-1}(z, \conj{z}) = a_{n-1} z^{n-1} + 
P_{n-2}(z, \conj{z})$ and the above reasoning applies to $\conj{P_{n-1}(z, 
\conj{z})}$, i.e., there exists $a_n \neq 0$ such that $a_n z^n + 
\conj{P_{n-1}(z, \conj{z})}$ has $n^2$ zeros.
Therefore, $P_n(z, \conj{z}) = \conj{a}_n \conj{z}^n + P_{n-1}(z, \conj{z})$ 
has 
$n^2$ zeros.  Note that, due to taking the complex conjugate, the indices of 
$P_n$ at its zeros in $A$ are $-1$ instead of $+1$.
This completes the proof.
\end{proof}

\begin{remark}
Truncating the lower order terms of $P_k$ yields the polynomial
\begin{equation}
Q_k(z, \conj{z}) = a_k z^k + a_{k-1} \conj{z}^{k-1}, \quad k = 2, \ldots, n,
\end{equation}
which has the zeros $\zeta_{1,1} = 0$ and
\begin{equation} \label{eqn:approx_zeros}
\zeta_{k,j} = \frac{\abs{a_{k-1}}}{\abs{a_k}} \exp \left(\ii 
\frac{\arg(a_{k-1}/a_k) + (2j+1)\pi}{2k-1} \right), \quad j = 1, \dots, 2k-1;
\end{equation}
see Example~\ref{ex:cauchy_bound}.  These are located on the circle $\abs{z} = 
\abs{a_{k-1}}/\abs{a_k}$ with equispaced angles.
If $\abs{a_1}, \abs{a_2}, \ldots, \abs{a_n}$ are sufficiently small, the zeros 
of $P_n$ are approximately located at the points $\zeta_{k,j}$.
Moreover, using the Newton-Kantorovich theorem, one can show that the Newton 
iteration with initial point $\zeta_{k,j}$ converges to a zero of $P_n$, and 
every zero of $P_n$ is obtained in this way.
We omit the details and refer to~\cite[Sect.~4.1]{SeteZur2020}.
\end{remark}

\begin{figure}[t]
{\centering
\includegraphics[width = .327\linewidth]{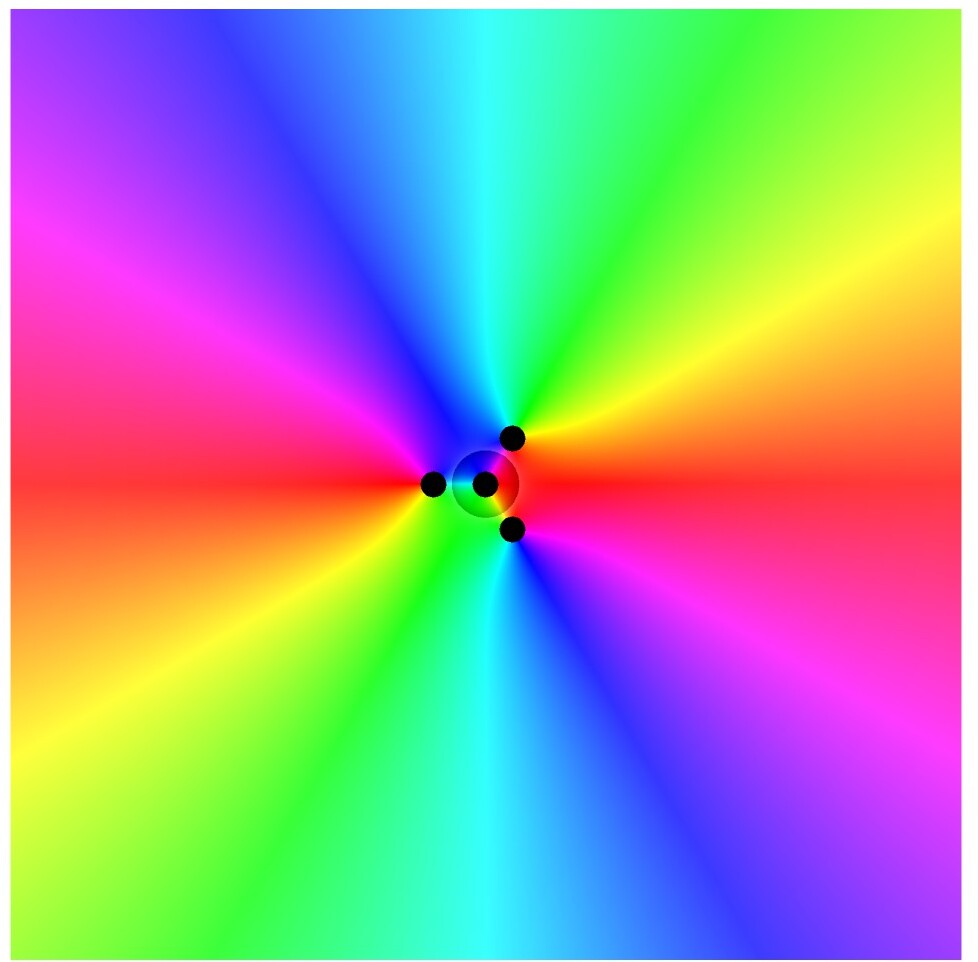}
\hfill
\includegraphics[width = .327\linewidth]{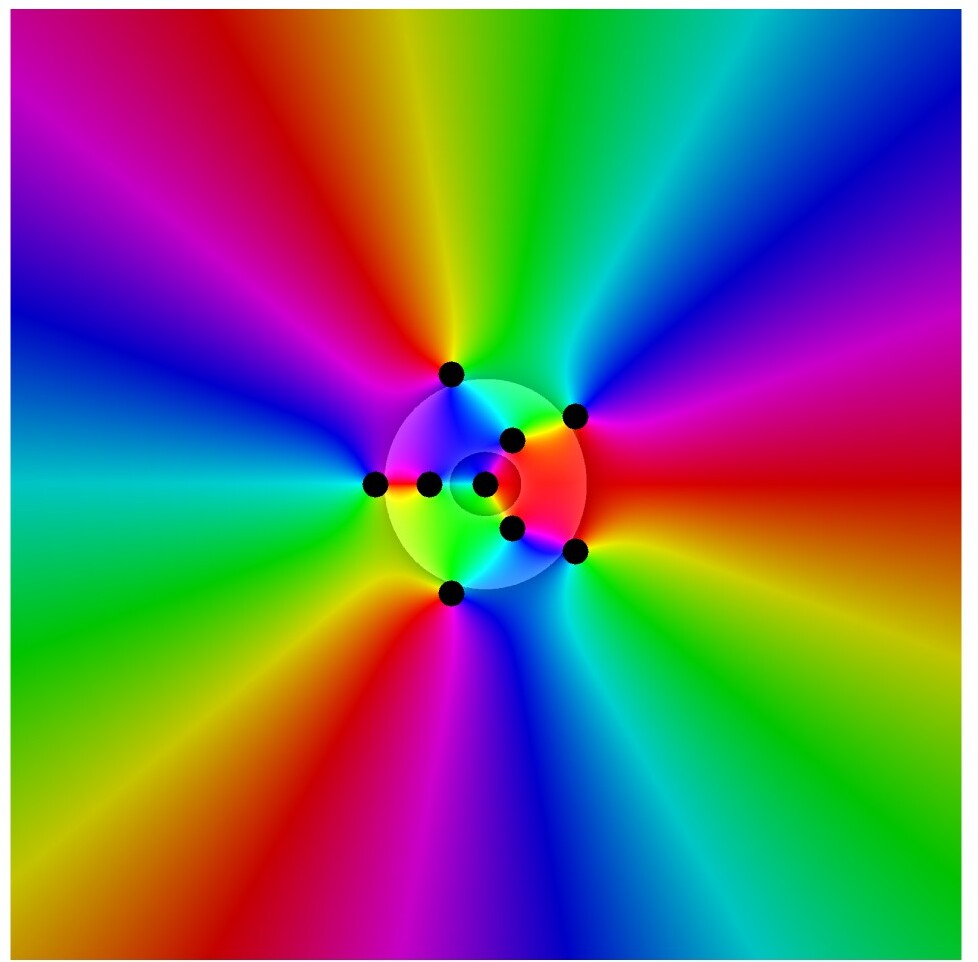}
\hfill
\includegraphics[width = .327\linewidth]{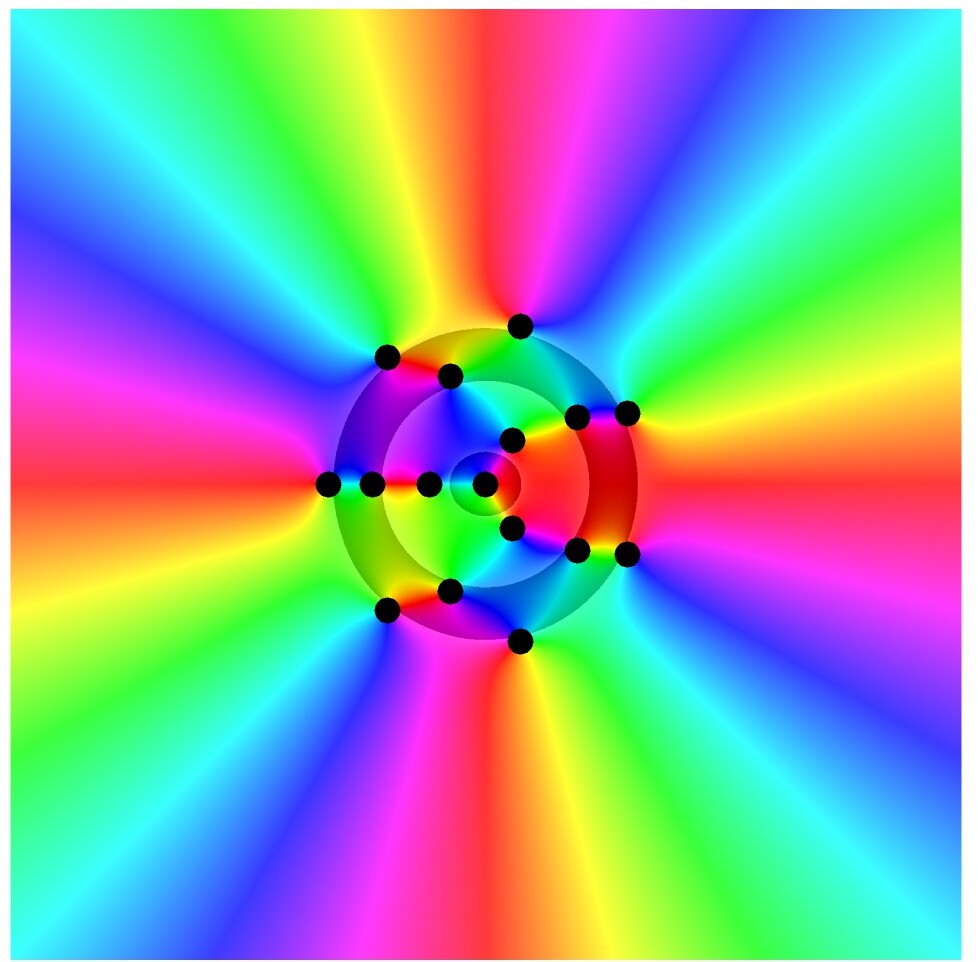}

\smallskip

\includegraphics[width = .327\linewidth]{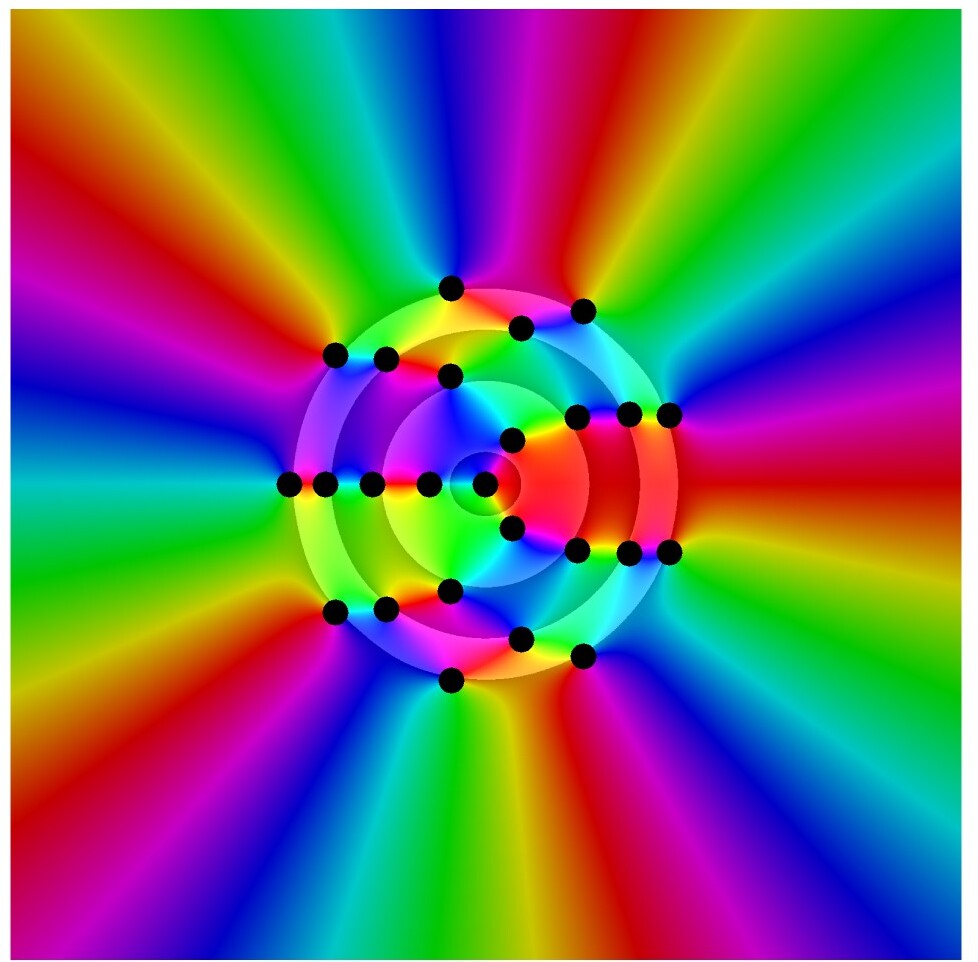}
\hfill
\includegraphics[width = .327\linewidth]{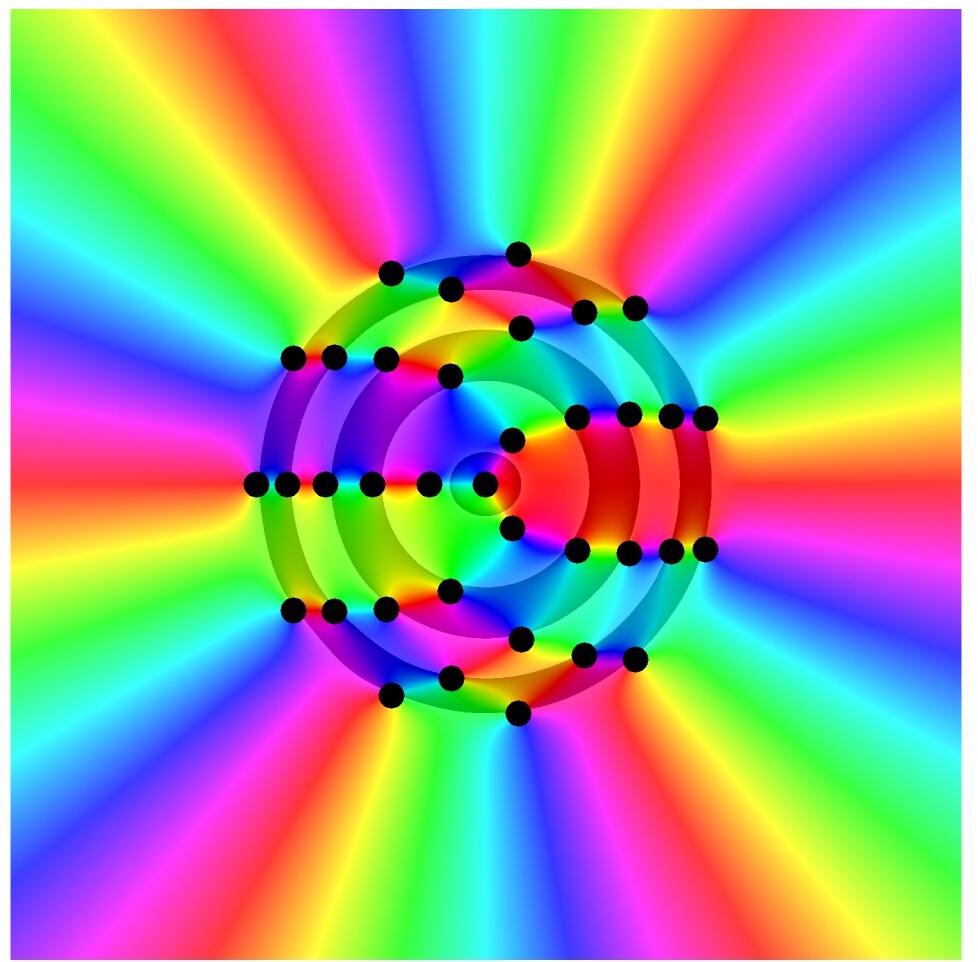}
\hfill
\includegraphics[width = .327\linewidth]{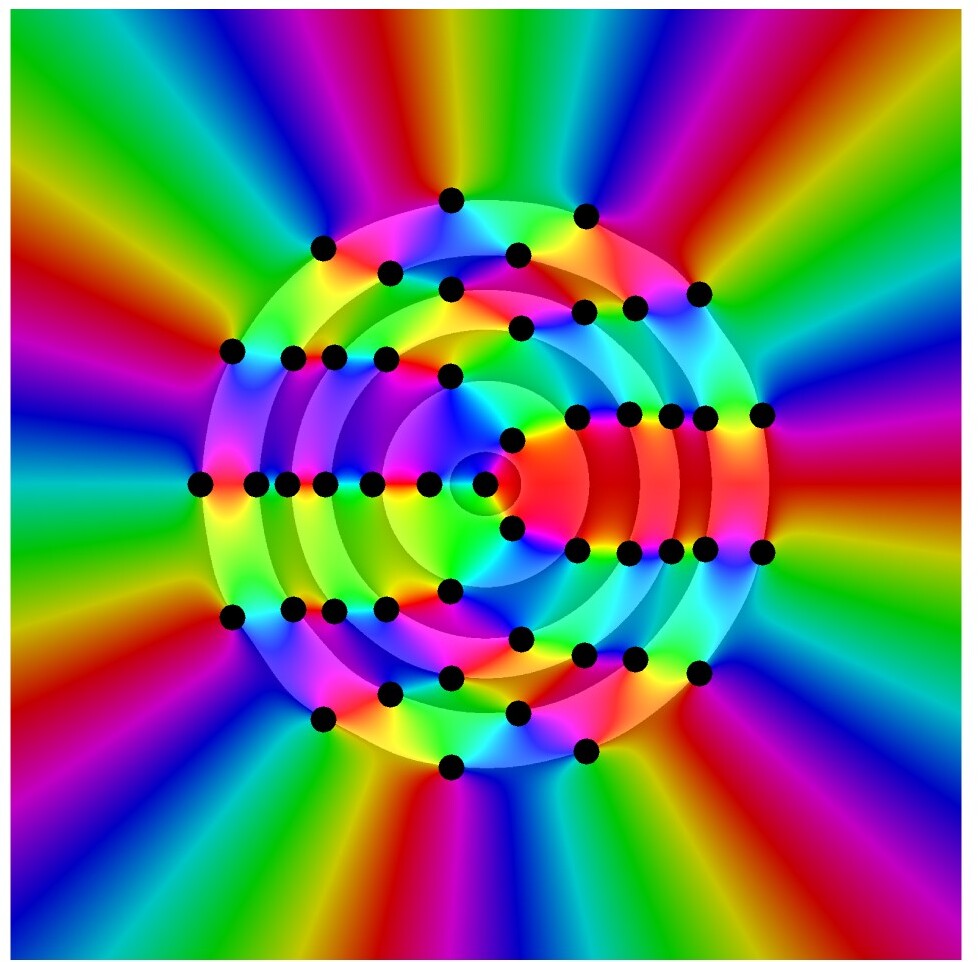}

\smallskip

\includegraphics[width = .327\linewidth]{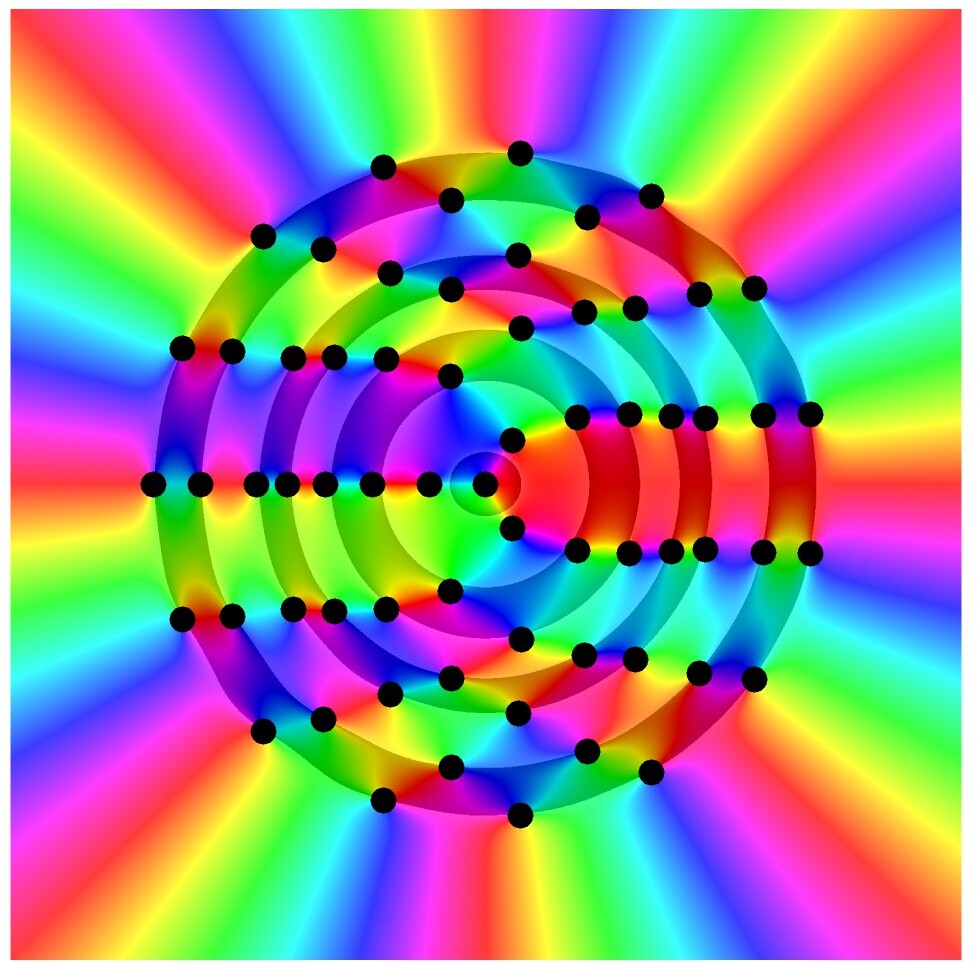}
\hfill
\includegraphics[width = .327\linewidth]{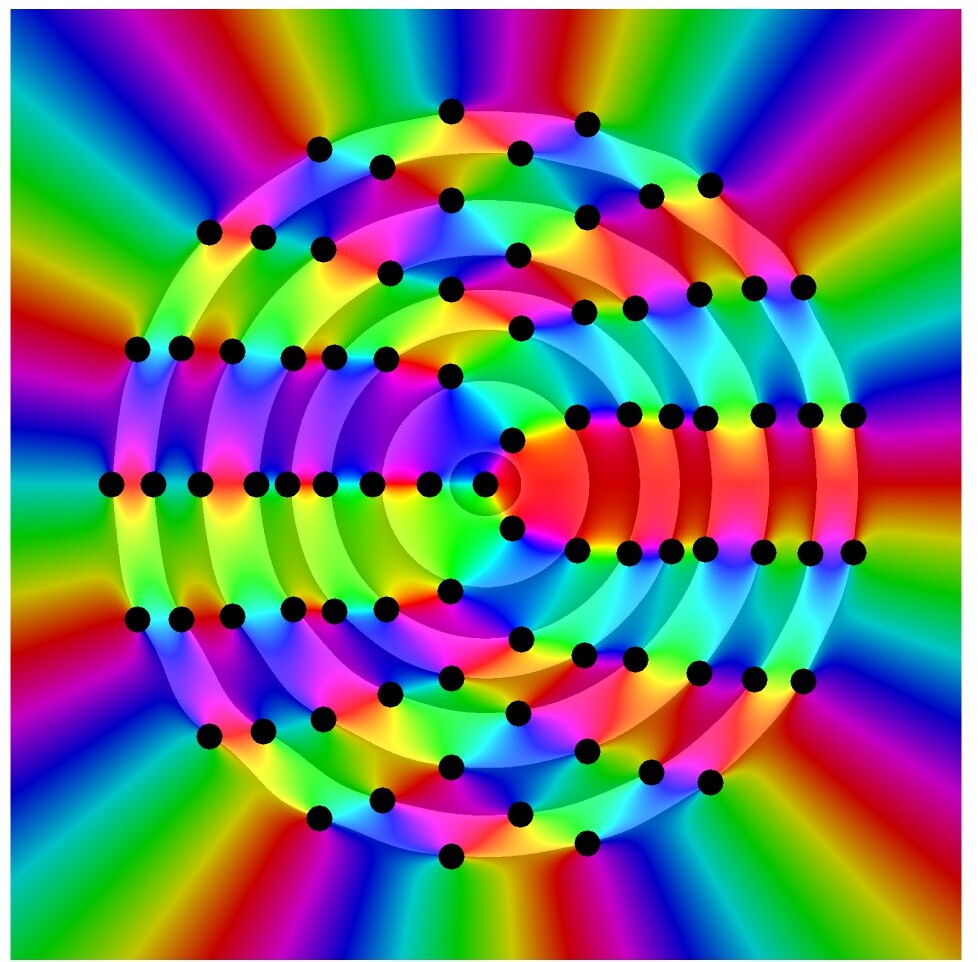}
\hfill
\includegraphics[width = .327\linewidth]{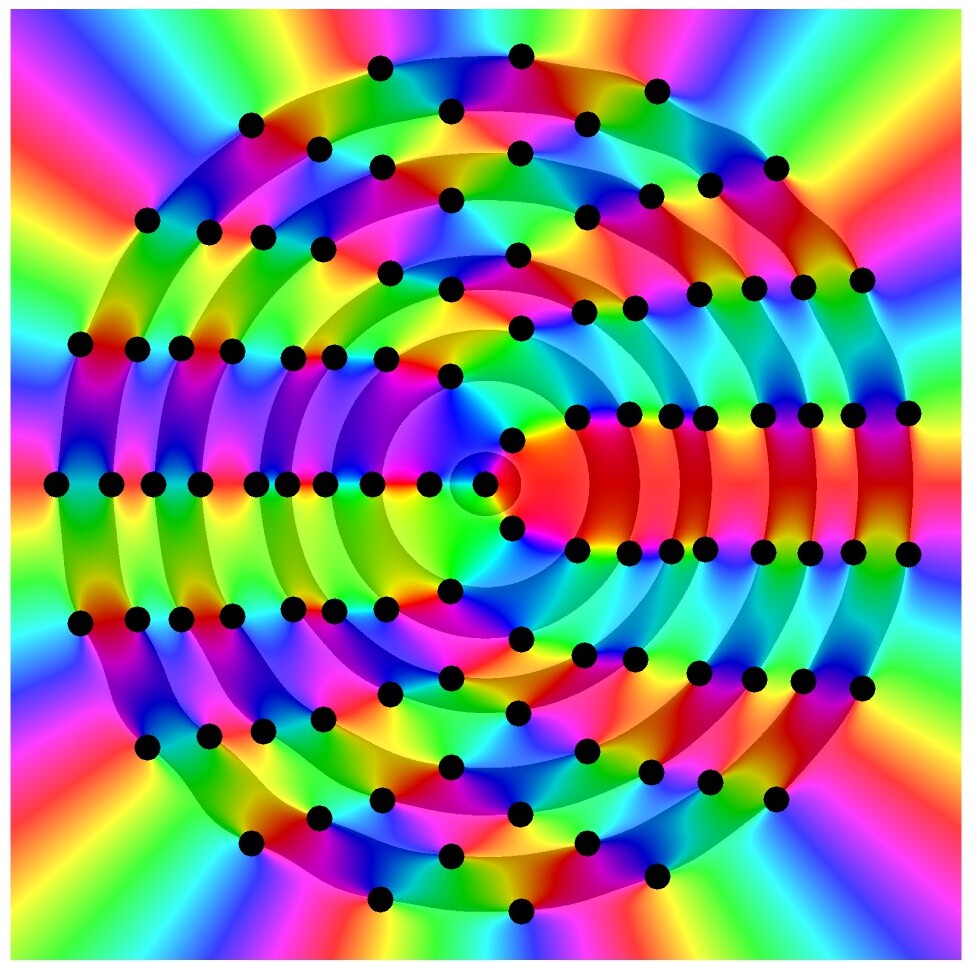}

}
\caption{Phase plots of $P_n \circ \phi$ for $n = 2, \dots, 10$ in 
Example~\ref{exp:extremal}.}
\label{fig:extremal_construction}
\end{figure}

\begin{figure}[t]
{\centering
\includegraphics[width = \linewidth]{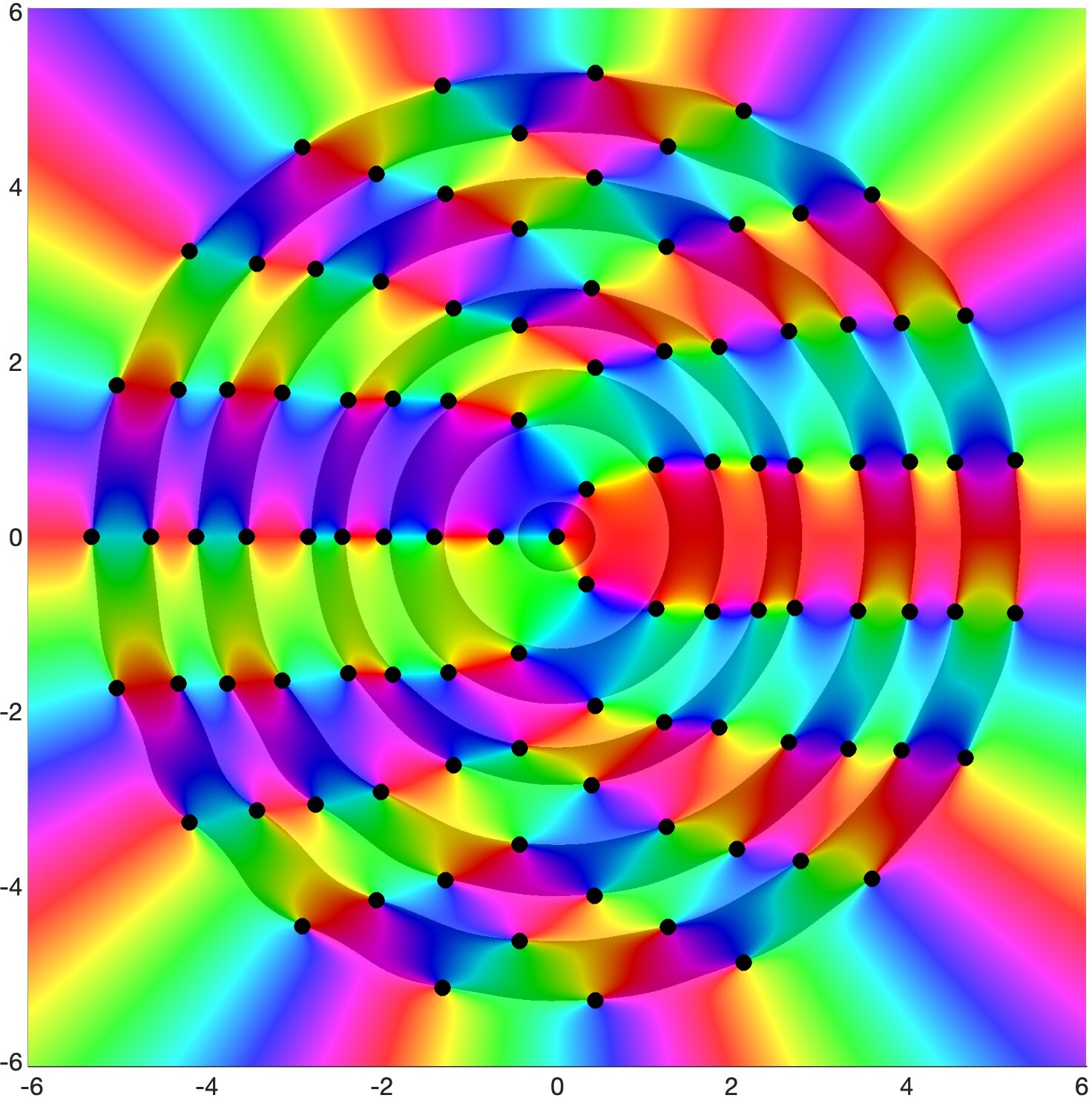}

}
\caption{Phase plot of $P_{10} \circ \phi$ in 
Example~\ref{exp:extremal}.}
\label{fig:extremal_final}
\end{figure}

\begin{example}\label{exp:extremal}
We consider $P_n$ in~\eqref{eqn:extremal_polynomial}, $n = 2, \ldots, 10$,
with 
\begin{align*}
a_1 &= 1, & a_2 &= 1, & a_3 &= 10^{-1}, & a_4 &= 10^{-3}, &  a_5 &= 10^{-6}, \\
a_6 &= 10^{-10}, & a_7 &= 10^{-16}, & a_8 &= 10^{-24}, & a_9 &= 10^{-34}, & 
a_{10} &= 10^{-47},
\end{align*}
and visualize them with phase plots; see Section~\ref{sect:location} for 
details.
Since the coefficients $a_k$ decrease exponentially, the radii 
$\abs{a_{k-1}}/\abs{a_k}$
in~\eqref{eqn:approx_zeros} become very large. 
To nevertheless visualize all zeros and critical curves of $P_n$ in the same 
plot, we display the phase of $P_n \circ \phi$ with the 
sense-preserving homeomorphism $\phi : \C \to \C$, $\phi(z) = z \exp(\abs 
z^2)$, instead of $P_n$, contracting the complex plane.
Figure~\ref{fig:extremal_construction} shows phase plots of $P_n \circ \phi$ 
for $n = 2, \dots, 10$, each with $n^2$ zeros, and illustrates the iterative 
process described in the proof of Theorem~\ref{thm:extremal_number}; see also
Figure~\ref{fig:extremal_final} for $n = 10$.
The approximate circles with the zeros of $P_n$ and the critical curves, as 
well as the distorted annuli on which $P_n$ has the same orientation, are 
apparent in all plots.
For the numerical computation of the zeros, we used our MATLAB-implementation 
of the (harmonic) Newton method~\cite{SeteZur2020} with initial points 
$\zeta_{k,j}$ in~\eqref{eqn:approx_zeros}.  The computation yields 
approximations $z_{k,j}$ of the $100$ zeros of $P_{10}$ with a very small
maximum relative residual
\begin{equation*}
\max_{k = 1, \dots, 10} \: \max_{j=1, \ldots, 2k-1} \abs*{\frac{P_{10}(z_{k,j}, 
\conj{z}_{k,j})}{a_k z_{k,j}^k}} = 1.4151 \cdot 10^{-15},
\end{equation*}
which is close to machine precision.
\end{example}

\begin{remark}
It would be interesting to study the minimum number of nonzero coefficients 
$\alpha_{j,k-j}$ that are needed, such that the polyanalytic 
polynomial~\eqref{eqn:papolynomial} has the maximum number of zeros.  In 
general, a polyanalytic polynomial of degree $n$ has $(n+1)(n+2)/2$ 
coefficients.  While the extremal polynomial of 
Wilmshurst~\cite[p.~2080]{Wilmshurst1998} has $2n$ nonzero coefficients, 
and the one of Bshouty, Hengartner, and Suez~\cite{BshoutyHengartnerSuez1995} 
has $n+1$ nonzero coefficients,
our construction in~\eqref{eqn:extremal_polynomial} requires $n$ nonzero 
coefficients, and leads to a more sparse polynomial.  We conjecture that 
$n$ is the minimum number of nonzero coefficients such that a polyanalytic 
polynomial of degree $n$ can have the maximum number of $n^2$ zeros.
\end{remark}
 \section{Wilmshurst's problem}
\label{sect:wilmshurst}

By the fundamental theorem of algebra, every analytic polynomial of degree $n 
\geq 0$ has exactly $n$ zeros counted with multiplicities.  The search for a 
similar result on the number of zeros of harmonic polynomials $p(z) + 
\conj{q(z)}$, which takes the individual degrees of $p$ and $q$ into account, 
was initiated by Sheil-Small~\cite[p.~19]{Sheil-Small1992} in 1992;
see also~\cite[p.~50~ff.]{Sheil-Small2002}.

Since then, it has become a highly active research topic over the past 
decades.
Among the most important results are the publications
\cite{Geyer2008,KhavinsonLundbergPerry2024,KhavinsonSwiatek2003, 
LeeLerarioLundberg2015,Lundberg2023,PeretzSchmid1997,SeteZur2021,
Wilmshurst1998}.
In this section, we discuss a possible extension to polyanalytic polynomials. 

For $n > m \ge 1$, denote
\begin{equation}
\cH_{n,m} = \{p(z) + \conj{q(z)} : \deg(p) = n, \deg(q) = m\}.
\end{equation}
By Corollary~\ref{cor:wilmshurst}, the number of zeros of $P \in \cH_{n,m}$, 
denoted by $N(P)$, satisfies $N(P) \leq n^2$.
Hence, the \emph{maximal valence} of harmonic polynomials,
\begin{equation}
V(\cH_{n,m}) = \max_{P \in \cH_{n,m}} N(P) \le n^2,
\end{equation}
is finite.
As mentioned in the introduction, Wilmshurst's 
conjecture~\eqref{eq:wilmshurst_conjecture} is in general not true.
We therefore refer to bounding $V(\cH_{n,m})$ in terms of $n$ and $m$ as 
\emph{Wilmshurst's problem}.
Note that Lee, Lerario, and Lundberg, who 
disproved~\eqref{eq:wilmshurst_conjecture}, conjectured instead
\begin{equation}
V(\cH_{n,m}) \le n + 2m(n-1),
\end{equation}
which is larger than $n^2$ for $m > \frac{n}{2}$, and hence, even if true, 
not optimal; see~\cite[Conj.~1.4]{LeeLerarioLundberg2015} and 
also~\cite{Lundberg2023}.

Next, we consider for $n > m \ge 1$ the set of polyanalytic polynomials
\begin{equation}
\cP_{n,m} = \{ p(z) + Q(z,\conj z) : \deg(p) = n, \deg(Q) = m\}.
\end{equation}
In contrast to $\cH_{n,m}$, nonzero mixed terms $\alpha_{j,k-j} z^j 
\conj{z}^{k-j}$ 
with $k \leq m$ in~\eqref{eqn:papolynomial} are allowed.  Any $P \in 
\cP_{n,m}$ has at most $n^2$ zeros by Corollary~\ref{cor:wilmshurst}, i.e., 
also the maximal valence of $\cP_{n,m}$, 
\begin{equation} \label{eqn:max_valence_Pnm}
V(\cP_{n,m}) = \max_{P \in \cP_{n,m}} N(P) \le n^2,
\end{equation}
is finite.
Moreover, since $\cH_{n,m} \subseteq \cP_{n,m}$, we have
\begin{equation}\label{eqn:VP}
V(\cH_{n,m}) \le V(\cP_{n,m}), \quad n > m \geq 1.
\end{equation}
For which $n$ and $m$ does equality in~\eqref{eqn:VP} hold?  The next 
theorem states that we have $V(\cH_{n,m}) = V(\cP_{n,m})$ for $m = 1$ and $m = 
n-1$, which are the only cases where $V(\cH_{n,m})$ is known.

\begin{theorem}\label{thm:VP}
Let $n \ge 2$. Then the following holds:
\begin{enumerate}
\item \label{it:VP_1}
$V(\cH_{n,1}) = V(\cP_{n,1}) = 3n-2$,

\item \label{it:VP_n-1} $V(\cH_{n,n-1}) = V(\cP_{n,n-1}) = n^2$.
\end{enumerate}
\end{theorem}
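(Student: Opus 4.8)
The plan is to prove two chains of (in)equalities. For each of the two items, the inequality $V(\cH_{n,m}) \le V(\cP_{n,m})$ is already given in~\eqref{eqn:VP}, so it remains to (a) establish the numerical value on the left, i.e.\ $V(\cH_{n,1}) = 3n-2$ and $V(\cH_{n,n-1}) = n^2$, and (b) establish the reverse inequality $V(\cP_{n,m}) \le V(\cH_{n,m})$ by showing that the extra mixed terms allowed in $\cP_{n,m}$ cannot push the valence above the known harmonic value. Part (a) is not our result: $V(\cH_{n,1}) = 3n-2$ is exactly the theorem of Khavinson and {\'S}wi{\c{a}}tek~\cite{KhavinsonSwiatek2003} together with Wilmshurst's sharpness example, and $V(\cH_{n,n-1}) = n^2$ is Wilmshurst's theorem combined with Corollary~\ref{cor:wilmshurst} and the extremal example~\eqref{eqn:wilmshurst_polynomial} (or Theorem~\ref{thm:extremal_number}). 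So I would cite these and concentrate on part (b).

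For item~\ref{it:VP_n-1} the argument is immediate: by Corollary~\ref{cor:wilmshurst}, \emph{every} polyanalytic polynomial of the form $\alpha_{n,0} z^n + P_{n-1}(z,\conj z)$ with $\deg(P_{n-1}) \le n-1$ has at most $n^2$ zeros, and every $P \in \cP_{n,n-1}$ has exactly this shape (the analytic part has degree $n$, everything else has degree $\le n-1$). Hence $V(\cP_{n,n-1}) \le n^2$, and since the Wilmshurst example~\eqref{eqn:wilmshurst_polynomial} lies in $\cH_{n,n-1} \subseteq \cP_{n,n-1}$ and attains $n^2$, we get $V(\cH_{n,n-1}) = V(\cP_{n,n-1}) = n^2$.

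The substantive case is item~\ref{it:VP_1}: I must show $V(\cP_{n,1}) \le 3n-2$, i.e.\ that allowing the mixed terms $\alpha_{0,1}\conj z$ and $\alpha_{1,0}z$ (beyond $\alpha_{1,0}z$ already being part of the analytic polynomial of degree $n$) plus a constant does not help. A polynomial $P \in \cP_{n,1}$ has the form $P(z,\conj z) = p(z) + \beta \conj z + \gamma$ with $\deg(p) = n$ and $\beta \ne 0$. The natural strategy is a reduction to the harmonic case $\cH_{n,1}$: absorb the term $\beta\conj z + \gamma$ by recognizing that it is itself of the form $\conj{q(z)}$ with $q(z) = \conj\beta z + \conj\gamma$, a polynomial of degree~$1$. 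That is, $\cP_{n,1} = \cH_{n,1} \cup \{p(z) + \text{const}\}$ — the mixed-degree-$\le 1$ part of a polyanalytic polynomial is automatically anti-analytic, since the only monomials of degree $\le 1$ are $1$, $z$, $\conj z$, and $z$ is analytic. So in fact $\cP_{n,1}$ contains nothing genuinely new beyond $\cH_{n,1}$ (up to the degenerate case $\beta = 0$, which reduces to an analytic polynomial with $n$ zeros, well below $3n-2$ for $n \ge 2$). Therefore $V(\cP_{n,1}) = V(\cH_{n,1}) = 3n-2$.

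The anticipated obstacle is essentially bookkeeping rather than a deep difficulty: one must be careful that ``$\deg(Q) = m$'' in the definition of $\cP_{n,m}$ refers to the polyanalytic degree $\deg$, so that for $m=1$ the part $Q$ really is supported on monomials of total degree $\le 1$, which forces $Q(z,\conj z) = \beta\conj z + \gamma$ (the $z$-term being redundant with $p$); and one must handle the boundary/degenerate subcases ($\beta = 0$, or $q$ having degree $0$ rather than exactly $1$) to confirm they never exceed $3n-2$. I would state the reduction $\cP_{n,1} \subseteq \cH_{n,1} \cup \{\text{analytic degree } n\}$ explicitly as the key observation, invoke Corollary~\ref{cor:wilmshurst} for the finiteness and the $n^2$ bound where needed, and cite~\cite{KhavinsonSwiatek2003,Geyer2008,Wilmshurst1998} for the value $3n-2$. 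The whole proof is then two short paragraphs plus citations.
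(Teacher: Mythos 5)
Your proposal is correct and follows essentially the same route as the paper: for $m=1$ you observe that the degree-$1$ polyanalytic part is automatically anti-analytic (the paper phrases this by normalizing $\alpha_{0,1}^{-1}P = \widetilde{p}(z)+\conj{z}$), handle the degenerate case $\alpha_{0,1}=0$ separately, and invoke \cite{KhavinsonSwiatek2003} and \cite{Geyer2008}; for $m=n-1$ you combine the $n^2$ bound from Corollary~\ref{cor:wilmshurst} with an extremal example, exactly as in the paper. Only minor nitpick: sharpness of $3n-2$ is due to Geyer (which you do cite), not to Wilmshurst's example, which concerns $m=n-1$.
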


\begin{proof}
\ref{it:VP_1}
Let $P \in \cP_{n,1}$, i.e., $P(z,\conj z) = p(z) + \alpha_{1,0} z + 
\alpha_{0,1}\conj z + \alpha_{0,0}$.  If $\alpha_{0,1} = 0$, then $P$ is an 
analytic polynomial, and hence has at most $n$ distinct zeros.  If 
$\alpha_{0,1} \neq 0$, then consider $Q(z,\conj z) = \alpha_{0,1}^{-1} P(z, 
\conj{z}) = \widetilde{p}(z) + \conj{z}$.  The 
number of zeros of $Q$, and therefore also of $P$, is at most $3n-2$ 
by~\cite{KhavinsonSwiatek2003} and this bound is
attained for some harmonic polynomial~\cite{Geyer2008}, which 
shows~\ref{it:VP_1}.

\ref{it:VP_n-1}
By~\eqref{eqn:max_valence_Pnm} and~\eqref{eqn:VP}, we have $V(\cH_{n,n-1}) \leq 
V(\cP_{n,n-1}) \leq n^2$.
Moreover, there exists a $P \in \cH_{n,n-1}$ with $n^2$ zeros, e.g., the 
polynomial in Theorem~\ref{thm:extremal_number} or Wilmshurst's 
polynomial~\cite[p.~2080]{Wilmshurst1998}.  This establishes~\ref{it:VP_n-1}.
\end{proof}

We can represent harmonic and polyanalytic polynomials as trigonometric 
polynomials of order $n$ by writing $z = r \ee^{\ii \varphi}$ with $r \geq 0$ 
and $\varphi \in \R$.
A harmonic polynomial then has the form
\begin{equation}
P(z, \conj{z}) = p(z) + \conj{q(z)}
= \sum_{k=0}^n \beta_k z^k + \sum_{k=1}^n \beta_{-k} \conj{z}^k
= \sum_{k=-n}^n \beta_k r^{\abs{k}} \ee^{\ii k \varphi},
\end{equation}
see also~\cite[Sect.~2.6.9]{Sheil-Small2002}, while a polyanalytic polynomial 
has the form
\begin{equation}
P(z, \conj{z}) = \sum_{k=0}^n \sum_{j=0}^k \alpha_{j,k-j} z^j \conj{z}^{k-j}
= \sum_{k=0}^n \sum_{j=0}^k \alpha_{j,k-j} r^k \ee^{\ii (2j-k) \varphi}.
\end{equation}
For a harmonic polynomial the coefficient of $\ee^{\ii k \varphi}$ is $\beta_k 
r^{\abs{k}}$, while
for the polyanalytic polynomial the coefficient of $\ee^{\ii k \varphi}$
is a polynomial $p_k$ in $r$ of degree at most $\abs{k}$.
Note that $p_k$ is even if $k$ is even and odd if $k$ is odd.
Despite this difference, the authors would not be surprised if $V(\cH_{n,m}) = 
V(\cP_{n,m})$ holds for all $n > m \geq 1$,
i.e., that the absence of mixed terms $\alpha_{j,k-j} z^j \conj{z}^{k-j}$ does 
not decrease the maximal valence.
Proving an upper bound on the maximum number of zeros of harmonic polynomials, 
and thus further advancing the work of Sheil-Small and others, may not rely 
exclusively on harmonicity instead of polyanalyticity.

\paragraph{Conflict of interest.}
The authors have no competing interests to declare that are relevant to the 
content of this article.

\small
\bibliography{sete_zur_2024}

\begin{thebibliography}{10}

\bibitem{AbreuFeichtinger2014}
{\sc L.~D. Abreu and H.~G. Feichtinger}, {\em Function spaces of polyanalytic
  functions}, in Harmonic and complex analysis and its applications, Trends
  Math., Birkh\"{a}user/Springer, Cham, 2014, pp.~1--38.

\bibitem{Balk1968}
{\sc M.~B. Balk}, {\em The fundamental theorem of algebra for polyanalytic
  polynomials (in {R}ussian)}, Litovsk. Mat. Sb., 8 (1968), pp.~401--404.

\bibitem{Balk1991}
{\sc M.~B. Balk}, {\em Polyanalytic Functions}, vol.~63 of Mathematical
  Research, Akademie-Verlag, Berlin, 1991.

\bibitem{Balk1997}
{\sc M.~B. Balk}, {\em Polyanalytic functions and their generalizations}, in
  Complex analysis, {I}, vol.~85 of Encyclopaedia Math. Sci., Springer, Berlin,
  1997, pp.~195--253.

\bibitem{BeneteauHudson2018}
{\sc C.~B\'{e}n\'{e}teau and N.~Hudson}, {\em A survey on the maximal number of
  solutions of equations related to gravitational lensing}, in Complex analysis
  and dynamical systems, Trends Math., Birkh\"{a}user/Springer, Cham, 2018,
  pp.~23--38.

\bibitem{BoschKrajkiewicz1970}
{\sc W.~Bosch and P.~Krajkiewicz}, {\em The big {P}icard theorem for
  polyanalytic functions}, Proc. Amer. Math. Soc., 26 (1970), pp.~145--150.

\bibitem{BshoutyHengartnerSuez1995}
{\sc D.~Bshouty, W.~Hengartner, and T.~Suez}, {\em The exact bound on the
  number of zeros of harmonic polynomials}, J. Anal. Math., 67 (1995),
  pp.~207--218.

\bibitem{Cauchy1829}
{\sc A.~L. Cauchy}, {\em Exercices de Math\'ematiques, in \OE{}uvres
  compl\`etes, S\'erie~2, Vol.~9}, {Gauthier}-{Villars}, Paris, 1829.

\bibitem{Davis1974}
{\sc P.~J. Davis}, {\em The {S}chwarz function and its applications}, The
  Mathematical Association of America, Buffalo, NY, 1974.

\bibitem{Duren2004}
{\sc P.~Duren}, {\em Harmonic mappings in the plane}, vol.~156 of Cambridge
  Tracts in Mathematics, Cambridge University Press, Cambridge, 2004.

\bibitem{DurenHengartnerLaugesen1996}
{\sc P.~Duren, W.~Hengartner, and R.~S. Laugesen}, {\em The argument principle
  for harmonic functions}, Amer. Math. Monthly, 103 (1996), pp.~411--415.

\bibitem{Geyer2008}
{\sc L.~Geyer}, {\em Sharp bounds for the valence of certain harmonic
  polynomials}, Proc. Amer. Math. Soc., 136 (2008), pp.~549--555.

\bibitem{Henrici1974}
{\sc P.~Henrici}, {\em Applied and computational complex analysis},
  Wiley-Interscience [John Wiley \& Sons], New York-London-Sydney, 1974.

\bibitem{Huhtanen2003}
{\sc M.~Huhtanen}, {\em Orthogonal polyanalytic polynomials and normal
  matrices}, Math. Comp., 72 (2003), pp.~355--373.

\bibitem{HuhtanenLarsen2002}
{\sc M.~Huhtanen and R.~M. Larsen}, {\em Exclusion and inclusion regions for
  the eigenvalues of a normal matrix}, SIAM J. Matrix Anal. Appl., 23 (2002),
  pp.~1070--1091.

\bibitem{KhavinsonLundbergPerry2024}
{\sc D.~Khavinson, E.~Lundberg, and S.~Perry}, {\em On the valence of
  logharmonic polynomials}, in Recent {P}rogress in {F}unction {T}heory and
  {O}perator {T}heory, vol.~799 of Contemp. Math., Amer. Math. Soc.,
  Providence, RI, 2024, pp.~23--40.

\bibitem{KhavinsonSwiatek2003}
{\sc D.~Khavinson and G.~{\'S}wi{\c{a}}tek}, {\em On the number of zeros of
  certain harmonic polynomials}, Proc. Amer. Math. Soc., 131 (2003),
  pp.~409--414.

\bibitem{KrajkiewiczBosch1969}
{\sc P.~Krajkiewicz and W.~Bosch}, {\em Polyanalytic functions with equal
  modulus}, Proc. Amer. Math. Soc., 23 (1969), pp.~127--132.

\bibitem{LeeLerarioLundberg2015}
{\sc S.-Y. Lee, A.~Lerario, and E.~Lundberg}, {\em Remarks on {W}ilmshurst's
  theorem}, Indiana Univ. Math. J., 64 (2015), pp.~1153--1167.

\bibitem{Lundberg2023}
{\sc E.~Lundberg}, {\em The valence of harmonic polynomials viewed through the
  probabilistic lens}, Proc. Amer. Math. Soc., 151 (2023), pp.~2963--2973.

\bibitem{Marden1966}
{\sc M.~Marden}, {\em Geometry of polynomials}, vol.~No. 3 of Mathematical
  Surveys, American Mathematical Society, Providence, RI, second~ed., 1966.

\bibitem{PanaitopolStefanescu1990}
{\sc L.~Panaitopol and D.~\c{S}tef\u{a}nescu}, {\em On the generalized
  difference polynomials}, Pacific J. Math., 143 (1990), pp.~341--348.

\bibitem{PeretzSchmid1997}
{\sc R.~Peretz and J.~Schmid}, {\em On the zero sets of certain complex
  polynomials}, in Proceedings of the {A}shkelon {W}orkshop on {C}omplex
  {F}unction {T}heory (1996), vol.~11 of Israel Math. Conf. Proc., Bar-Ilan
  Univ., Ramat Gan, 1997, pp.~203--208.

\bibitem{SeteLuceLiesen2015}
{\sc O.~S\`ete, R.~Luce, and J.~Liesen}, {\em Perturbing rational harmonic
  functions by poles}, Comput. Methods Funct. Theory, 15 (2015), pp.~9--35.

\bibitem{SeteZur2020}
{\sc O.~S\`ete and J.~Zur}, {\em {A Newton method for harmonic mappings in the
  plane}}, IMA J. Numer. Anal., 40 (2020), pp.~2777--2801.

\bibitem{SeteZur2021}
{\sc O.~S\`ete and J.~Zur}, {\em Number and location of pre-images under
  harmonic mappings in the plane}, Ann. Fenn. Math., 46 (2021), pp.~225--247.

\bibitem{Sheil-Small1992}
{\sc T.~Sheil-Small}, {\em Tagungsbericht}, in Funktionentheorie,
  Mathematisches For\-schungs\-institut Oberwolfach, 1992, pp.~18--19.

\bibitem{Sheil-Small2002}
{\sc T.~Sheil-Small}, {\em Complex Polynomials}, vol.~75 of Cambridge Studies
  in Advanced Mathematics, Cambridge University Press, Cambridge, 2002.

\bibitem{SorberVanBarelDeLathauwer2014}
{\sc L.~Sorber, M.~Van~Barel, and L.~De~Lathauwer}, {\em Numerical solution of
  bivariate and polyanalytic polynomial systems}, SIAM J. Numer. Anal., 52
  (2014), pp.~1551--1572.

\bibitem{SuffridgeThompson2000}
{\sc T.~J. Suffridge and J.~W. Thompson}, {\em Local behavior of harmonic
  mappings}, Complex Variables Theory Appl., 41 (2000), pp.~63--80.

\bibitem{Wegert2012}
{\sc E.~Wegert}, {\em Visual complex functions. An introduction with phase
  portraits.}, Birkh\"auser/Springer Basel AG, Basel, 2012.

\bibitem{WegertSemmler2011}
{\sc E.~Wegert and G.~Semmler}, {\em Phase plots of complex functions: a
  journey in illustration}, Notices Amer. Math. Soc., 58 (2011), pp.~768--780.

\bibitem{Wilmshurst1998}
{\sc A.~S. Wilmshurst}, {\em The valence of harmonic polynomials}, Proc. Amer.
  Math. Soc., 126 (1998), pp.~2077--2081.

\end{thebibliography}
\bibliographystyle{siam} 
\normalsize

\end{document}